\newtheorem{Theorem}{Theorem}
\newtheorem{maintheorem}{Theorem}
\newtheorem{maincorollary}[maintheorem]{Corollary}
\newtheorem{T}{Theorem}
\newtheorem{Corollary}[T]{Corollary}
\newtheorem{Proposition}[T]{Proposition}
\newtheorem{Lemma}[T]{Lemma}
\newtheorem{Remark}[T]{Remark}
\newtheorem*{remark}{Remark}
\newtheorem{Definition}[T]{Definition}
\newtheorem*{claim}{Claim}
\newtheorem{Claim}{Claim}
\def \AA {{\mathbb A}}
\def \BB {{\mathbb B}}
\def \RR {{\mathbb R}}
\def \NN {{\mathbb N}}
\def \VV {{\mathbb V}}
\def \XX {{\mathbb X}}
\def \RE {{\mathrm{Re}}}
\def \cf {\mathcal{F}}
\def \cp {\mathcal{P}}
\def \cc {\mathcal{C}}
\def \cu {\mathcal{U}}
\def \co {\mathcal{O}}
\def \cv {\mathcal{V}}
\def \ci {\mathcal{I}}
\def \cR {\mathscr{R}}
\newcommand{\leb}{\operatorname{Leb}}
\newcommand{\dist}{\operatorname{dist}}
\newcommand{\per}{\operatorname{Per}}
\newcommand{\interior}{\operatorname{Interior}}
\newcommand{\fix}{\operatorname{Fix}}
\begin{document}

\author{P. Brand\~ao}
\address{P. Brand\~ao\\
Impa, Estrada Dona Castorina, 110, Rio de Janeiro, Brazil.} \email{paulo@impa.br}

\author{J. Palis}
\address{J. Palis\\
Impa, Estrada Dona Castorina, 110, Rio de Janeiro, Brazil.} \email{jpalis@impa.br}

\author{V. Pinheiro}
\address{V. Pinheiro\\
Departamento de Matem\'atica, Universidade Federal da Bahia\\
Av. Ademar de Barros s/n, 40170-110 Salvador, Brazil.}
\email{viltonj@ufba.br}

\date{\today}


\title{On the Finiteness of Attractors for piecewise $C^2$ Maps of the Interval}

\maketitle
\begin{abstract}

We consider piecewise $C^2$ non-flat maps of the interval and show that, for Lebesgue almost every point, its omega-limit set is either a periodic orbit, a cycle of intervals or the closure of the orbits of a subset of the critical points. In particular, every piecewise $C^2$ non-flat map of the interval displays only a finite number of non-periodic attractors.

\end{abstract}

\tableofcontents

\section{Introduction}

When studying dynamical systems, we often focus on attractors since many orbits converge to them in the future. Thus, it is of particular importance to obtain results on their finiteness and, as conjectured in \cite{Pa00,Pa2005}, that should be the case for a dense subset of differentiable dynamics on compact manifolds.

For maps of the interval, the finiteness of the number of attractors began to be established by Blokh and Lyubich in \cite{BL89eg, BL91}. They proved that a  $C^3$ non-flat map
with a single critical point and negative Schwarzian derivative has a single attractor, whose basin of attraction contains Lebesgue almost every point of the interval.

Later on, van Strien and Vargas  \cite{vanStrien:2004p221} proved that every $C^3$ non-flat map $f$ with a finite number of critical points has a finite number of non-periodic attractors.
More recently, we have obtained in \cite{Brandao:2013wf} the finiteness of attractors  for maps of the interval displaying discontinuities. For that, we have  assumed the maps to be piecewise $C^3$ with negative Schwarzian derivative. 

In the present paper we go further, proving that the finiteness of non-periodic attractors is also valid for non-flat piecewise $C^2$ maps.

Maps with discontinuities naturally arise from differentiable vector fields: non-flat piecewise $C^r$ maps, $r\ge 2$, can be obtained as the quotient by stable manifolds of Poincar\'e maps of $C^r$ dissipative flows \cite{GW}.

As a by-product of the techniques presented here, we also provide a new proof of Ma\~né's notable theorem concerning expanding sets of the interval \cite{Man85}.

\subsection{Statement of the main results}\label{SubsectionStaofMTh}

A compact set $A$ is called an {\em attractor} for a map $f$ if its basin of attraction $\beta_{f}(A):=\{x\,;\,\omega_f(x)\subset A\}$ has positive Lebesgue measure and there is no strictly smaller closed set $A' \subset A$ so that $\beta_{f}(A')$ is the same as $\beta_{f}(A)$ up to a zero measure set. Here, $\omega_f(x)$ is the positive limit set of $x$, that is, the set of accumulating points of the forward orbit of $x$.


A map $f:[0,1]\to \RR$ is called {\em non-flat} at the point $c\in[0,1]$ if there exist $\varepsilon>0$, constants $\alpha, \beta\ge1$  and $C^2$  diffeomorphisms $\phi_{0}:[c-\varepsilon,c]\to \text{Im}(\phi_{0})$ and $\phi_{1}:[c,c+\varepsilon]\to\text{Im}(\phi_{1})$ such that $$f(x)=\begin{cases}a+\big(\phi_{0}(x-c)\big)^\alpha&\text{ if }x\in(c-\varepsilon,c)\cap(0,1)\\
b+\big(\phi_{1}(x-c)\big)^\beta&\text{ if }x\in(c,c+\varepsilon)\cap(0,1)\end{cases}$$
where $a=\lim_{0<\varepsilon\to 0}f(c-\varepsilon)$ and $b=\lim_{0<\varepsilon\to 0}f(c+\varepsilon)$.

\begin{Definition}
Let $C_f\subset(0,1)$ be a finite set. A map $f:[0,1]\setminus\cc_f\to\RR$ is called {\em non-flat} if it is non-flat at every point $x\in[0,1]$. In particular, if $f$ is a diffeomorphism on a neighborhood of a point $p$, then it is non-flat at $p$.
\end{Definition}

In this work we are dealing with non-flat piecewise $C^2$ maps of the interval into itself. More precisely, non-flat maps $f:[0,1] \to [0,1]$ that are $C^2$ local diffeomorphisms, except for a finite set of non-flat points $\cc_f\subset (0,1)$. This {\em exceptional set} contains all critical points of $f$, as well as all discontinuities. 

Due to the existence of an exceptional set, we can have lateral periodic points that are not periodic. Indeed, a point $p\in[0,1]$ is called {\em right-periodic} with period $n$ for $f$ if $n$ is the smallest integer $\ell\ge1$ such that if $f^\ell(x)\searrow p$ when $x\searrow p$.
Similarly, $p$ is called {\em left-periodic} with period $n$ if $n$ is the smallest integer $\ell\ge1$ such that $f^\ell(x)\nearrow p$ when $x\nearrow p$.
We say that a point $p$ is {\em periodic-like} if it is left or right-periodic. In particular, a {\em fixed-like point} is periodic-like with period equal to one.
 On the left picture of Figure~\ref{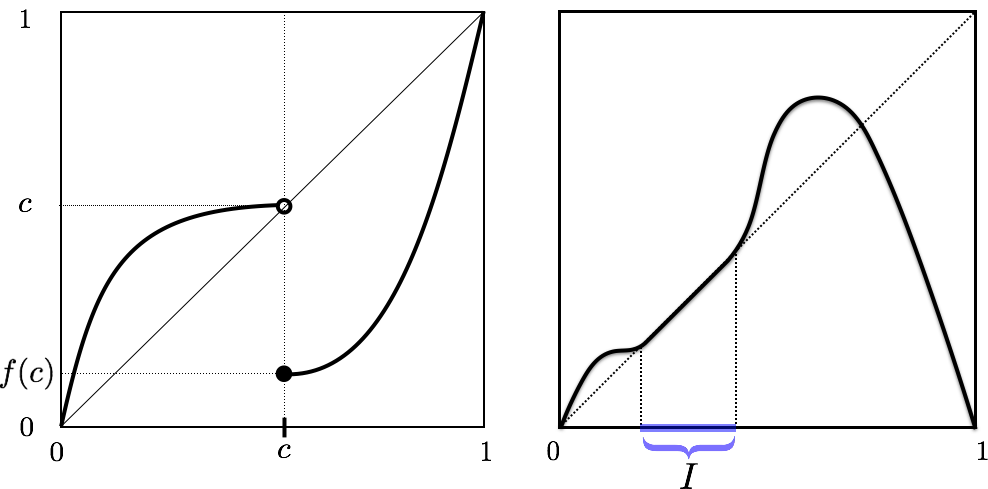}, we have that $A=\{c\}$ is an attracting  fixed-like point with $\beta_f(A)=(0,1)$.
\begin{figure}
\begin{center}\includegraphics[scale=.25]{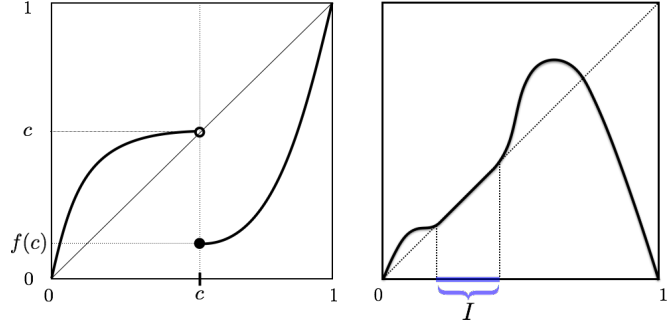}\\
\caption{\small On the right side, we see an example of an attracting periodic-like orbit (indeed, a fixed-like point) which is not an attracting periodic orbit. On the left side $I$ is an interval of fixed points.}\label{PeriodicLikeAttractor2.png}
\end{center}
\end{figure}

%
%

\begin{maintheorem}[Finiteness of the number of non-periodic attractors]\label{mainTheoremMTheoFofA}
Let $f:[0,1]\to[0,1]$, be a non-flat $C^2$ local diffeomorphism in the whole interval, except for a finite set $\cc_f\subset(0,1)$. Then, $f$ admits only a finite number of non periodic-like attractors. Indeed, there is a finite collection of attractors $A_1,\cdots,A_n$, such that
$$\leb\big(\beta_f(A_1)\cup\cdots\cup\beta_f(A_n)\cup\BB_0(f)\cup\co_f^{-}(Per(f))\big)=1,	
$$
where $\BB_0(f)$ is the union of the basin of attraction of all attracting periodic-like orbits. 
Furthermore, $\omega_f(x)=A_j$ for almost every $x\in\beta_f(A_j)$ and every $j=1,...,n$.
\end{maintheorem}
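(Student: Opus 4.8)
The plan is to follow the strategy that, by now, is standard for this circle of results (Blokh--Lyubich, van Strien--Vargas, and the authors' earlier work \cite{Brandao:2013wf}), namely to classify the possible $\omega$-limit sets and then show that the ``wild'' case cannot occur on a positive-measure set unless it is accounted for by finitely many critical orbits. First I would establish a trichotomy: for a non-flat piecewise $C^2$ map $f$, Lebesgue-a.e.\ point $x$ has $\omega_f(x)$ equal to (i) an attracting periodic-like orbit, or (ii) a cycle of intervals on which $f$ acts (essentially) transitively, or (iii) a \emph{minimal Cantor set} in the closure of the forward orbit of a subset of $\cc_f$. The key analytic input powering this is the absence of wandering intervals and Koebe-type / cross-ratio distortion control in the $C^2$ non-flat setting; together with a Ma\~n\'e-type hyperbolicity statement (which the introduction promises to reprove here) these force any point whose orbit stays a definite distance from $\cc_f$ to fall into case (i) or (ii).

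The heart of the argument is then the finiteness count. For case (ii), a cycle of intervals of period $k$ carries a unique absolutely continuous (or at least ``almost every orbit equidistributes'') behavior, so $\omega_f(x)$ equals the cycle for a.e.\ $x$ in its basin; and there can be only finitely many such cycles because each one contains at least one point of $\cc_f$ in its interior (otherwise $f$ restricted to it would be an expanding/monotone map with no room for a cycle of intervals of period $>$ trivial), and $\cc_f$ is finite. For case (iii), to each minimal Cantor attractor $A$ one associates at least one critical point $c\in\cc_f$ with $\omega_f(c)=A$ (this is where non-flatness and the nonexistence of wandering intervals are used: the solenoidal/Cantor attractor must ``absorb'' a critical orbit, since otherwise a nested sequence of periodic intervals around $A$ would be attracted to a periodic orbit or degenerate). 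Since $\cc_f$ is finite, only finitely many such $A$ arise. I would then set $A_1,\dots,A_n$ to be the attractors from cases (ii) and (iii), let $\BB_0(f)$ collect the basins of all attracting periodic-like orbits, and throw into $\co_f^{-}(\per(f))$ the (zero-measure, but needed for bookkeeping) preimages of genuine periodic points; a short measure-theoretic argument, using that the complement of $\bigcup_j \beta_f(A_j)\cup\BB_0(f)$ would otherwise support a non-classified behavior, closes the proof that the stated union has full measure. The ``furthermore'' clause ($\omega_f(x)=A_j$ a.e.\ on $\beta_f(A_j)$) follows from ergodicity of the conservative part of $f$ on each cycle of intervals and from minimality in the Cantor case.

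The main obstacle — and the reason the paper exists — is upgrading all of this from $C^3$ with negative Schwarzian derivative (where the Koebe principle and Ma\~n\'e's theorem are classical) to \emph{piecewise $C^2$ non-flat}. In the $C^2$ setting one no longer has the Schwarzian sign trick, so the distortion estimates must be obtained differently (e.g.\ via the Ma\~n\'e--de Melo--van Strien bounded-distortion machinery adapted to account for the discontinuities in $\cc_f$, or via the Kozlovski--Shen--van Strien real-bounds technology), and the presence of discontinuities means ``intervals'' must be tracked together with their one-sided limits, which is exactly what forces the periodic-\emph{like} language. Concretely, I expect the hardest technical step to be the no-wandering-interval / real-bounds argument across discontinuities, because a wandering interval is allowed to accumulate on a discontinuity from one side, and ruling this out (or showing its orbit is captured by a critical orbit) is what ultimately both reproves Ma\~n\'e's theorem and delivers the finiteness count. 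Once the real bounds and the Ma\~n\'e dichotomy are in hand, assembling the three cases into the displayed full-measure identity is routine bookkeeping.
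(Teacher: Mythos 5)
Your outline captures the general architecture (reduce to a classification of a.e.\ $\omega$-limit sets, then count using finiteness of $\cc_f$), but it rests on two steps that do not survive contact with this setting. First, you invoke ``absence of wandering intervals'' as the key analytic input. For non-flat piecewise $C^2$ maps \emph{with discontinuities} wandering intervals can genuinely exist (Denjoy-like behaviour reappears once $\cc_f$ contains discontinuity points), and the paper never rules them out: its Proposition~\ref{PropWanderingFree} only shows that the $\omega$-limit set of a wandering interval is contained in $\overline{\co_f^+(\cv_f)}$, so that such orbits are absorbed by the critical-orbit attractors rather than excluded. An argument whose ``hardest technical step'' is a no-wandering-interval theorem across discontinuities would be attempting to prove something false; the actual work (Lemma~\ref{LemmaRetnicgdt5}, Proposition~\ref{Propositionjhvtr68ijn}) is to build first-return Markov maps with Koebe/Vargas--van Strien distortion on nice intervals \emph{away} from $\overline{\co_f^+(\cv_f)}$, and to conclude that any positive-measure set of orbits visiting such an interval either gains unbounded derivative or lies in $\BB_1(f)$.

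Second, the finiteness and the ``furthermore'' clause do not follow from minimality plus ``ergodicity of the conservative part.'' The paper's route is: Lemma~\ref{tudoounadadershchw} gives $\omega_f(x)\subset\overline{\co_f^+(\cv_f)}$ for a.e.\ $x\notin\BB_0(f)\cup\BB_1(f)\cup\co_f^-(\per(f))$, and then Theorem~\ref{TheoOMEGAtudo} upgrades this inclusion to the exact identity $\omega_f(x)=\bigcup_{c_\pm\in\omega_f(x)}\overline{\co_f^+(c_\pm)}$ by a surgery argument: one modifies $f$ on a small one-sided neighborhood of a critical point to send the lateral value to $\{0,1\}$ (the auxiliary maps $g_n$), which removes one lateral critical value at a time without changing the dynamics of the orbits under consideration. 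Only with this exact identity does one get finitely many possible attractors (at most $2^{2\#\cc_f}-1$ subsets of $\{c_\pm\}$, plus finitely many cycles of intervals) and the statement $\omega_f(x)=A_j$ a.e.\ on $\beta_f(A_j)$. Your proposal instead asserts that each Cantor attractor is minimal and absorbs a critical orbit with $\omega_f(c)=A$; minimality is neither proved nor true in general (the attractors are of the form $\bigcup_{v\in\VV}\omega_f(v)$ for a subset $\VV$ of lateral critical values, cf.\ Theorem~\ref{mainTheoremClassification}), so the identification of $\omega_f(x)$ with $A_j$ for a.e.\ point of the basin is left without justification. This exactness step is the substantive missing idea in your plan.
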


Notice that only in degenerate cases $\leb(\co_f^-(\per(f))>0$ (as in the picture on the right side of Figure~\ref{PeriodicLikeAttractor2.png}). Indeed, $\co_f^-(\per(f))$ is generically a countable set, so generically we can write the equation above as $$\leb\big(\beta_f(A_1)\cup\cdots\cup\beta_f(A_n)\cup\BB_0(f)\big)=1.$$

%
%

The theorem above combined with the fact that for every $C^2$ non-flat map of the interval there  exists $n_0\ge 1$ such that every periodic orbit of period greater or equal to $n_0$ is a hyperbolic repeller \cite{MS89}, yields the following result:

\begin{maintheorem}
[]\label{cordemelovstr}
Let $f$ be a $C^{2}$ non-flat map such that $\#(Fix f^n)<+\infty$ for every $n>0$, then $f$ has only a finite number of attractors. 
\end{maintheorem}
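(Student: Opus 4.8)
The plan is to substitute the conclusion of Theorem~\ref{mainTheoremMTheoFofA} into the present setting and to convert the two ``extra'' pieces appearing there, namely $\BB_0(f)$ and $\co_f^-(\per(f))$, into finitely many attractors and a null set respectively, using the quoted hyperbolicity result of \cite{MS89} together with the hypothesis $\#(\fix f^n)<+\infty$. For the first piece, recall from \cite{MS89} that there is $n_0\ge1$ so that every periodic orbit of period $\ge n_0$ is a hyperbolic repeller; such an orbit $O$ is never an attractor, since $\{x:\omega_f(x)\subset O\}\subset\co_f^-(O)$ and, $f$ having finitely many turning points, $f$ is piecewise monotone and hence finite-to-one, so $\co_f^-(O)$ is countable. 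If $f$ is continuous (the general case being analogous), a periodic-like point of period $n$ lies on an honest periodic orbit whose period $q$ satisfies $n\in\{q,2q\}$, so an attracting periodic-like orbit has $q<n_0$; since $\sum_{q<n_0}\#(\fix f^q)<+\infty$ there are only finitely many periodic orbits of period $<n_0$, hence only finitely many attracting periodic-like orbits $O_1,\dots,O_m$, and $\BB_0(f)=\beta_f(O_1)\cup\dots\cup\beta_f(O_m)$.

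For the second piece, the hypothesis makes $\per(f)=\bigcup_{n\ge1}\fix f^n$ countable (in particular there is no interval of periodic points), and, $f$ being finite-to-one, $\co_f^-(\per(f))=\bigcup_{k\ge0}f^{-k}(\per(f))$ is then countable, so $\leb\big(\co_f^-(\per(f))\big)=0$. Plugging these two facts into Theorem~\ref{mainTheoremMTheoFofA} produces a finite family $\mathcal{A}=\{A_1,\dots,A_n,O_1,\dots,O_m\}$ of attractors whose basins have full Lebesgue measure in $[0,1]$ and on each of which $\omega_f$ is almost everywhere constant, equal to the corresponding member of $\mathcal{A}$ — for the $A_i$ this is exactly the ``furthermore'' part of Theorem~\ref{mainTheoremMTheoFofA}, and for the $O_j$ it is immediate from minimality of a periodic(-like) orbit.

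It remains to see that every attractor of $f$ is obtained from $\mathcal{A}$. Given an attractor $A$, set $\mathcal{A}(A)=\{D\in\mathcal{A}:\leb(\beta_f(A)\cap\beta_f(D))>0\}$; this is nonempty because $\leb(\beta_f(A))>0$ and the basins of the members of $\mathcal{A}$ cover a full measure set. For $D\in\mathcal{A}(A)$, almost every point of $\beta_f(A)\cap\beta_f(D)$ has $\omega$-limit equal to $D$ and contained in $A$, whence $D\subset A$ and so $\beta_f(D)\subset\beta_f(A)$ up to a null set; conversely $\beta_f(A)$ is, up to a null set, the union of the $\beta_f(D)$ with $D\in\mathcal{A}(A)$. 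Therefore the closed set $B=\overline{\bigcup_{D\in\mathcal{A}(A)}D}$ satisfies $B\subset A$ and $\beta_f(B)=\beta_f(A)$ mod zero, so the minimality clause in the definition of an attractor forces $A=B$; in particular $A$ is determined by the subset $\mathcal{A}(A)\subset\mathcal{A}$, and $f$ has at most $2^{n+m}$ attractors. I expect the one genuinely delicate point to be the reduction in the first paragraph, i.e.\ passing from the statement of \cite{MS89} about honest periodic orbits to the corresponding bound on the period of attracting \emph{periodic-like} orbits (and ruling out such orbits whose one-sided orbit meets the exceptional set $\cc_f$ when discontinuities are present); the rest is bookkeeping around Theorem~\ref{mainTheoremMTheoFofA}.
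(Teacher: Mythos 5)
Your proof is correct and follows essentially the same route as the paper, which obtains Theorem~\ref{cordemelovstr} in a single sentence from Theorem~\ref{mainTheoremMTheoFofA} together with the cited fact from \cite{MS89} that all periodic orbits of sufficiently high period are hyperbolic repellers. Your write-up just supplies the bookkeeping the paper leaves implicit (finitely many attracting periodic-like orbits of bounded period, $\leb(\co_f^-(\per(f)))=0$ under the hypothesis $\#(\fix f^n)<\infty$, and the identification of every attractor with a closure of a union of the finitely many basic ones via the minimality clause), including the side point about periodic-like versus genuine periodic orbits that the paper does not discuss.
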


To state the next result, we define a cycle of intervals as a transitive finite union of non-trivial closed intervals. This is a common type of attractor for one-dimensional maps. Indeed, the support of any ergodic absolutely continuous invariant measure is always a cycle of intervals.

\begin{maintheorem}\label{mainTheoremClassification}
Let $f:[0,1]\to[0,1]$ be a non-flat $C^2$ local diffeomorphism in the whole interval, except for a finite set $\cc_f\subset(0,1)$, and let $\cv_f=\{f(c_\pm)\,;\,c\in\cc_f\}$. If $A_j$ is one of the attractors given by Theorem~\ref{mainTheoremMTheoFofA} then
$A_j$ is either a cycle of intervals
or a Cantor set of the form $A_j=\bigcup_{v\in\VV_j}\omega_f(v)$ for some $\VV_j\subset\cv_f\setminus\co_f^-(\cc_f)$ with $v\in\omega_f(v)$ $\forall\,v\in\VV_j$.
\end{maintheorem}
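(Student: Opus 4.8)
\textbf{Proof proposal for Theorem~\ref{mainTheoremClassification}.}

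The plan is to start from an attractor $A_j$ produced by Theorem~\ref{mainTheoremMTheoFofA} and run a dichotomy according to whether $A_j$ has nonempty interior. First I would observe that, since $\omega_f(x)=A_j$ for a.e.\ $x\in\beta_f(A_j)$ and $\beta_f(A_j)$ has positive measure, $A_j$ is (up to the obvious lateral subtleties at $\cc_f$) forward invariant and ``transitive'' in the measure-theoretic sense: there is a dense orbit, or at least the orbit of a positive-measure set is dense in $A_j$. If $A_j$ contains an interval $J$, then $\bigcup_{n\ge 0}f^n(J)$ is a finite union of intervals (using that $f$ has finitely many branches and is a local diffeomorphism off $\cc_f$, so forward images of intervals are finite unions of intervals whose number cannot grow indefinitely once we are inside the attractor), and the transitivity forces this union to be all of $A_j$; minimality of $A_j$ as an attractor (no strictly smaller closed set with the same basin) then upgrades this to $A_j$ being exactly a transitive finite union of nondegenerate closed intervals, i.e.\ a cycle of intervals.

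The substantive case is when $A_j$ has empty interior. Here the goal is to show $A_j$ is a Cantor set of the stated form. The key input should be a structural description, established earlier in the paper in the course of proving Theorem~\ref{mainTheoremMTheoFofA}, of how a positive-measure basin with a non-interval attractor is organized: typically one shows that almost every point in $\beta_f(A_j)$ is eventually captured by a nested sequence of ``nice'' intervals shrinking to the attractor, and that the attractor accumulates on — indeed is driven by — the forward orbits of certain critical values $v\in\cv_f$. Concretely, I would argue that $A_j=\overline{\bigcup_{v\in\VV_j}\co_f^+(v)}$ where $\VV_j$ is the set of those $v\in\cv_f$ whose orbit stays in (accumulates on) $A_j$; transitivity of $A_j$ then gives $\overline{\co_f^+(v)}=A_j$ for each such $v$, hence $A_j=\bigcup_{v\in\VV_j}\omega_f(v)$, and recurrence $v\in\omega_f(v)$ follows because an orbit eventually trapped in the attractor must accumulate back on itself (otherwise $v$ is pre-periodic or escapes, contradicting that it feeds the attractor). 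One must also exclude $v\in\co_f^-(\cc_f)$: if some critical value eventually lands on $\cc_f$, its orbit is finite and cannot generate a Cantor attractor, so such $v$ are not in $\VV_j$ — this is exactly the exception $\VV_j\subset\cv_f\setminus\co_f^-(\cc_f)$. Finally, to see $A_j$ is a genuine Cantor set (perfect, totally disconnected, nonempty), total disconnectedness is the empty-interior hypothesis, nonemptiness is automatic, and perfectness comes from the fact that an attractor with no isolated points — any isolated point would have to be periodic-like, contradicting that we are among the non-periodic-like attractors $A_1,\dots,A_n$ — so $A_j$ has no isolated points.

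The main obstacle I anticipate is the empty-interior case: making rigorous the claim that $A_j$ is driven by finitely many critical orbits, i.e.\ identifying $\VV_j$ and proving $A_j=\bigcup_{v\in\VV_j}\omega_f(v)$ with recurrence, rather than merely $A_j\subset\overline{\co_f^+(\cc_f)}$. This requires the renormalization / nice-interval machinery from the proof of Theorem~\ref{mainTheoremMTheoFofA} (the dichotomy between the measure escaping to a cycle of intervals and the measure being ``absorbed'' by a Cantor set built from critical orbits), together with a careful bookkeeping of laterally-defined orbits at points of $\cc_f$ so that the lateral values $f(c_\pm)$ are the right objects to track. A secondary but genuine technical point is handling critical values that are pre-periodic or pre-critical, which must be shown not to contribute to a non-periodic-like Cantor attractor; this is where the hypothesis $\VV_j\subset\cv_f\setminus\co_f^-(\cc_f)$ and the separation of periodic-like behaviour into $\BB_0(f)$ and $\co_f^-(\per(f))$ in Theorem~\ref{mainTheoremMTheoFofA} do the needed work.
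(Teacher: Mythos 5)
Your overall shape (interval case versus Cantor case driven by critical orbits) matches the target, but the two steps you explicitly lean on in the empty-interior case are exactly where the real work lies, and as stated they do not hold. First, ``transitivity of $A_j$ then gives $\overline{\co_f^+(v)}=A_j$ for each such $v$'' is unjustified and in general false: the paper's starting point (Theorem~\ref{TheoOMEGAtudo}) is that for a.e.\ $x$ outside $\BB_0(f)\cup\BB_1(f)\cup\co_f^-(\per(f))$ one has $\omega_f(x)=\overline{\co_f^+(\cu)}$ for a finite set $\cu$ of \emph{lateral} critical values, and no single orbit closure need equal $A_j$; the final form $A_j=\bigcup_{v\in\VV_j}\omega_f(v)$ is a union over several values whose orbit closures may differ. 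Second, your recurrence argument --- ``an orbit eventually trapped in the attractor must accumulate back on itself, otherwise $v$ is pre-periodic or escapes'' --- is not a proof: a point whose forward orbit accumulates on $A_j$ can perfectly well satisfy $v\notin\omega_f(v)$, with $\omega_f(v)$ a proper subset of $A_j$. Likewise, ruling out periodic-like $v$'s is not automatic from the attractor being non-periodic-like, since $A_j=\overline{\co_f^+(\cu)}$ could a priori contain a periodic-like lateral value together with other material.

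What the paper actually does, and what is missing from your proposal, is a selection-plus-surgery argument. It orders the lateral values in $\cu$ by $\alpha\prec\beta$ iff $\alpha\in\omega_f(\beta)$ and $\beta\notin\omega_f(\alpha)$, and keeps only the $\cu$-maximal ones (this is how one gets $\bigcup_j\overline{\co_f^+(\alpha_j)}=\overline{\co_f^+(\cu)}$ with good elements $\alpha_j$). It then proves two claims about a maximal $\alpha$: (i) $\alpha$ is not periodic-like, using Lemma~\ref{Lemmaiuywe4jj} to produce some $\beta\in\cu$ with $\alpha\in\omega_f(\beta)$ and $\omega_f(\beta)\ne\omega_f(\alpha)$, contradicting maximality; and (ii) $\alpha$ is recurrent, by contradiction: one modifies $f$ as in Remark~\ref{RemarkExt001} to a map $g$ agreeing with $f$ on the relevant set but sending the other lateral critical values to the fixed boundary $\{0,1\}$, and then applies Lemma~\ref{LemmaRetnicgdt5} and Proposition~\ref{Propositionjhvtr68ijn} to conclude that a.e.\ point of a positive-measure piece of the basin would lie in $\BB_1(f)$ (a cycle of intervals), which is excluded. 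Only after (i) and (ii) does one set $v_j=f(\alpha_j)\notin\co_f^-(\cc_f)$, get $v_j\in\omega_f(v_j)\setminus\per(f)$, and deduce that each $\omega_f(v_j)$ is perfect with empty interior, hence the union is a Cantor set. Your proposal correctly anticipates that this is ``the main obstacle,'' but it supplies no mechanism for it, and the shortcuts you propose in its place (dense single orbit by transitivity; automatic recurrence of trapped orbits; isolated points being periodic-like) would not survive scrutiny. So the proposal has a genuine gap precisely at the heart of the theorem.
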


To prove the theorems above, we need to relate most of  the orbits with the critical set. More precisely, orbits that are not attracted to periodic points nor to cycles of intervals accumulate on the critical points. Furthermore, we also  need to assure some expansion for orbits avoiding critical points. We obtain such informations from Theorem~\ref{MainTheoPreMane}.

\begin{maintheorem}\label{MainTheoPreMane}
Let $f:[0,1]\to[0,1]$ be a non-flat $C^2$ local diffeomorphism in the whole interval, except for a finite set $\cc_f\subset(0,1)$. If $x\in[0,1]\setminus\big(\BB_0(f)\cup\co_f^-(\per(f))\big)$ and $\overline{\co_f^+(x)}\cap\cc_f=\emptyset$ then $\sup_{n}|Df^n(x)|=\infty$.
	Furthermore, $\omega_f(x)\cap\cc_f\ne\emptyset$ for Lebesgue almost every $x\in[0,1]\setminus\big(\BB_0(f)\cup\co_f^-(\per(f))\big)$.
\end{maintheorem}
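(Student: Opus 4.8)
The plan is to read this as the piecewise $C^2$ incarnation of Ma\~n\'e's hyperbolicity theorem and to prove it in two stages: establish the pointwise assertion $\sup_n|Df^n(x)|=\infty$ by contradiction, and then deduce the measure assertion from it. For the first stage, suppose $x$ satisfies the hypotheses but $M:=\sup_n|Df^n(x)|<\infty$. The set $K:=\overline{\co_f^+(x)}$ is compact, forward invariant and, by hypothesis, disjoint from $\cc_f$, so I fix $\rho>0$ with $\overline{B_\rho(K)}\cap\cc_f=\emptyset$. On the compact set $U:=\{y\,;\,\dist(y,\cc_f)\ge\rho\}\supseteq B_\rho(K)$ the map $f$ is a genuine $C^2$ local diffeomorphism: $|Df|$ is bounded away from $0$ and $\infty$ on $U$, and the nonlinearity $|D^2f/Df|$ is bounded, say by $B$. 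This yields the standard distortion estimate: whenever $J$ is an interval with $f^j|_J$ injective and $f^j(J)\subseteq U$ for $0\le j<n$, the distortion of $f^n|_J$ is at most $\exp\big(B\sum_{j<n}|f^j(J)|\big)$. For each $n$ I then let $T_n\ni x$ be the maximal interval on which $f^n$ is injective, with $f^j(T_n)\subseteq U$ and $|f^j(T_n)|\le\delta$ for all $j\le n$, where $\delta<\rho$ is a small fixed constant.

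Now I run the Ma\~n\'e dichotomy. \emph{Case 1: $\inf_n|T_n|=:\eta>0$.} Then a fixed interval $W\ni x$ has $f^n|_W$ injective with $f^n(W)\subseteq U$ of bounded length for all $n$; analysing the orbit of $W$ (its iterates are intervals of bounded length avoiding $\cc_f$, so a recurrence/pigeonhole argument applies, using that in the non-flat $C^2$ regime there are no wandering intervals whose orbit avoids $\cc_f$) forces one of: the orbit of $W$ converges to an attracting or one-sidedly attracting periodic orbit, whence $x\in\BB_0(f)$; or $x\in\co_f^-(\per(f))$; or $\omega_f(x)$ contains a cycle of intervals, which is impossible since every cycle of intervals must contain a turning point or a discontinuity of $f$, hence a point of $\cc_f$, whereas $\omega_f(x)\subseteq K$ avoids $\cc_f$. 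Each alternative contradicts a hypothesis. \emph{Case 2: $|T_n|\to0$.} Then for large $n$ the maximality of $T_n$ forces an endpoint of $T_n$ to be pushed by some $f^j$, $j<n$, onto $\partial U$, i.e. its orbit comes $\rho$-close to $\cc_f$ (or to $\{0,1\}$) although the orbit of $x$ stays $2\rho$-away from $\cc_f$. Using that $\cc_f$ is finite, together with the non-flat normal form $f(y)=a+(\phi(y-c))^{\alpha}$ near each $c\in\cc_f$ to quantify the ``loss of domain'' each time a pulled-back interval passes near a critical/discontinuity point, I would run Ma\~n\'e's combinatorial ``shrinking neighbourhoods'' argument and conclude that this loss cannot be sustained: it eventually forces either the orbit of $x$ itself into $B_\rho(\cc_f)$, contradicting $\overline{\co_f^+(x)}\cap\cc_f=\emptyset$, or $\sup_n|Df^n(x)|=\infty$, contradicting $M<\infty$.

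For the second stage, note that $\bigcup_k f^{-k}(\cc_f)$ is countable, so off a null set the condition $\overline{\co_f^+(x)}\cap\cc_f=\emptyset$ is equivalent to $\omega_f(x)\cap\cc_f=\emptyset$, and compactness of $\omega_f(x)$ then gives $\co_f^+(x)\cap B_{1/m}(\cc_f)=\emptyset$ for some $m$. Hence it suffices to show that for each $m$ the forward-invariant set $E_m:=\{x\notin\BB_0(f)\cup\co_f^-(\per(f))\,;\,\co_f^+(x)\cap B_{1/m}(\cc_f)=\emptyset\}$ is Lebesgue null. By the first stage, $\sup_n|Df^n|=\infty$ on $E_m$, and the orbit of every $x\in E_m$ stays in the compact set $X_m:=[0,1]\setminus B_{1/m}(\cc_f)$, where $f$ has uniform $C^2$ bounds. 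Were $\leb(E_m)>0$, I would take a Lebesgue density point $p$ of $E_m$ and, using the expansion at $p$ to extract times at which the backward branch of $f^n$ through $p$ contracts definitely (hyperbolic times, via a Pliss-type argument after discounting the sub-linear time spent near the finitely many neutral periodic orbits) together with the distortion bound on $X_m$, spread the density of $E_m$ onto intervals of a definite size; a limiting argument would then yield a non-degenerate interval contained in $\overline{E_m}\subseteq X_m$, hence an $f$-invariant cycle of intervals disjoint from $\cc_f$ --- impossible, as above. Thus $\leb(E_m)=0$ for every $m$, which proves the ``furthermore''.

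I expect the main obstacle to be \emph{Case 2}: controlling the geometry of the backward orbits of short intervals as they pass near the finitely many critical and discontinuity points, where the non-flat local structure has to be exploited for effective loss-of-domain estimates, and where in the piecewise setting one must also handle one-sided limits at discontinuities and work with $C^2$ distortion control rather than a negative Schwarzian. This is precisely where the argument goes beyond \cite{Brandao:2013wf}. A secondary, lesser difficulty is the treatment of neutral periodic orbits in the measure estimate, since near them $|Df^n|$ may grow only polynomially, so uniform exponential expansion is unavailable and one must argue via hyperbolic times instead.
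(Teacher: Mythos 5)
Your outline has the right global shape (reduce to orbits avoiding a neighborhood of $\cc_f$, rule out bounded derivatives, then rule out a positive-measure set by producing an impossible cycle of intervals avoiding $\cc_f$), but the two steps you yourself flag as the hard ones are precisely where the argument, as written, does not go through. In Stage 1, Case 2 you appeal to ``Ma\~n\'e's combinatorial shrinking-neighbourhoods argument,'' but that argument needs the hypothesis that every periodic point in the region avoiding $U$ is hyperbolic repelling; Theorem~\ref{MainTheoPreMane} makes no such assumption -- it only excludes $x$ from $\BB_0(f)\cup\co_f^-(\per(f))$, so the orbit of $x$ may accumulate on neutral (periodic-like) orbits, and near them the pulled-back intervals $T_n$ neither grow nor lose domain in the way the Ma\~n\'e combinatorics requires. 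Handling exactly this situation is the technical heart of the paper: the first return map $F$ to a small nice interval may have a branch with $|F'|\le 1+\varepsilon^2$, and the paper gets expansion past the resulting interval of fixed points of $F^2$ by composing the weakly expanding branch with the first entry map into the complementary pieces (Lemma~\ref{Lemmajhvtr68ijn} and Proposition~\ref{Propositionjhvtr68ijn}, giving $|D\cf_j|>3$). Your sketch has no substitute for this mechanism. Likewise, the parenthetical claim in Case 1 that ``there are no wandering intervals whose orbit avoids $\cc_f$'' is not a citable fact in the discontinuous setting; wandering intervals can exist for piecewise maps, and the relevant statement (their $\omega$-limit lies in $\overline{\co_f^+(\cv_f)}$) is Proposition~\ref{PropWanderingFree}, itself proved by a nontrivial first-entry/first-return distortion argument.

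Stage 2 has a second, independent gap: from $\sup_n|Df^n(x)|=\infty$ alone you cannot extract Pliss/hyperbolic times. Pliss-type selection needs derivative growth at a definite exponential rate along the orbit (or at least a uniform positive average), whereas unboundedness of $|Df^n(x)|$ along some subsequence carries no rate and no control of the intermediate ratios $|Df^n(x)|/|Df^j(x)|$; moreover the assertion that the time spent near the finitely many neutral periodic orbits is sub-linear is unjustified (orbits can spend asymptotically full density of time there). So the density-point blow-up does not get off the ground. The paper avoids this entirely: after modifying $f$ outside a neighborhood of $\overline{\co_f^+(x)}$ so that the exceptional values fall into $\{0,1\}$ (Remark~\ref{RemarkExt001} and the map $g$ with $\cv_g\subset\{0,1\}$), it builds a \emph{full} induced Markov first return map to a small nice interval around a non-periodic point of $\omega_f(x)$ (Corollary~\ref{Lemmaprepre987}, Lemma~\ref{LemmaRetnicgdt5}), proves uniform expansion and bounded distortion for it (Proposition~\ref{Propositionjhvtr68ijn}, via Proposition~\ref{KoebeVvS}), and then applies Lemma~\ref{pagina1} to conclude that a.e.\ point whose $\omega$-limit meets that interval has a cycle of intervals as $\omega$-limit -- contradicting the fact that a cycle of intervals must meet $\cc_f$. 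If you want to keep your route, you must (i) supply an argument replacing Ma\~n\'e's hyperbolicity hypothesis near neutral periodic-like points, and (ii) replace the Pliss step by expansion with a definite rate (e.g.\ coming from an induced map), which is essentially what the paper's Section~4 provides.
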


For one-dimensional dynamics, Mañé exhibited in \cite{Man85} a surprisingly simple sufficient condition for orbits to have an expanding behavior: orbits that do not accumulate on periodic attractors, weak repellers nor the critical region, are expanding.  Here, as a corollary of Theorem~\ref{MainTheoPreMane}, we provide new proof of Mañé's result.

\begin{maincorollary}[Mañé]\label{ThMane}
Let $f:[a,b]\to\RR$ be a $C^{2}$ map. If $U$ is an open neighborhood of the critical points of $f$ and every periodic point of $[a,b]\setminus U$ is hyperbolic and expanding then $\Lambda=\{x\,;\,\co_f^+(x)\cap U=\emptyset\}$ is a uniformly expanding set. Furthermore, $\exists\,C>0$ and $\lambda>1$ such that, for any $n>0$, $$|Df^{n}(x)|>C \lambda^{n}$$ for every $x$ such that $\{x,\cdots,f^{n-1}(x)\}\cap U=\emptyset$. 
\end{maincorollary}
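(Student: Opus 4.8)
The plan is to deduce Corollary~\ref{ThMane} from the first conclusion of Theorem~\ref{MainTheoPreMane} (pointwise unboundedness of $\{|Df^{n}(x)|\}_{n}$) by the standard passage from pointwise to uniform expansion. Set $K=[a,b]\setminus U$; this is compact, and since every critical point of $f$ lies in the open set $U$ we have $\delta:=\min_{K}|f'|>0$. Let $\Lambda=\{x:\co_f^+(x)\cap U=\emptyset\}$, which is precisely the maximal forward-invariant subset of $K$, hence compact and forward invariant. Since the asserted estimate $|Df^{n}(x)|>C\lambda^{n}$ for all $x$ with $\{x,\dots,f^{n-1}(x)\}\subset K$ contains the uniform expansion of $\Lambda$ as the special case of infinite orbit segments, it suffices to prove that estimate. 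By a routine surgery one may also assume $f$ is of the form required by Theorem~\ref{MainTheoPreMane}: modify $f$ inside $U$ (and near $\partial[a,b]$) to obtain a non-flat $C^{2}$ local diffeomorphism of $[0,1]$ off a finite set $\cc_f\subset U$ that agrees with $f$ on a neighbourhood of $\Lambda$; this alters neither $\Lambda$ nor $Df^{n}$ along orbit segments lying in $K$.

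First I would establish pointwise expansion on $\Lambda$: $\sup_{n}|Df^{n}(x)|=\infty$ for every $x\in\Lambda$. Indeed $\overline{\co_f^+(x)}\subset K$ is disjoint from $\cc_f\subset U$, so if moreover $x\notin\BB_0(f)\cup\co_f^-(\per(f))$ this is exactly Theorem~\ref{MainTheoPreMane}. Now $x$ cannot lie in $\BB_0(f)$, for then $\omega_f(x)$ would be an attracting periodic orbit contained in $\overline{\co_f^+(x)}\subset K$, contradicting the hypothesis that every periodic point of $K$ is hyperbolic and expanding (as $f$ is continuous, ``periodic-like'' reduces to ``periodic'' here). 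And if $x\in\co_f^-(\per(f))$, say $f^{N}(x)=p$ with $p$ periodic, then the whole orbit of $p$ lies in $\overline{\co_f^+(x)}\subset K$, so $p$ is hyperbolic expanding; since $K$ has no critical point, $Df^{N}(x)\neq 0$ and $|Df^{n}(x)|=|Df^{n-N}(p)|\,|Df^{N}(x)|\to\infty$. This proves the claim.

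The crux is a uniformization step: there is $N\geq 1$ with $|Df^{N}(y)|>2$ whenever $y,\dots,f^{N-1}(y)\in K$. Assuming not, for each $N$ pick $x_{N}$ with $x_{N},\dots,f^{N-1}(x_{N})\in K$ and $|Df^{N}(x_{N})|\leq 2$; let $s_{N}$ be the least index in $\{0,\dots,N\}$ maximizing $k\mapsto|Df^{k}(x_{N})|$. If $\{N-s_{N}\}_{N}$ is unbounded, pass to a subsequence along which $N-s_{N}\to\infty$ and $f^{s_{N}}(x_{N})$ converges to some $w$; since arbitrarily many forward iterates of $f^{s_{N}}(x_{N})$ stay in $K$, we get $w\in\Lambda$, while $|Df^{k}(f^{s_{N}}(x_{N}))|=|Df^{s_{N}+k}(x_{N})|/|Df^{s_{N}}(x_{N})|\leq 1$ for $k\leq N-s_{N}$, whence $\sup_{k}|Df^{k}(w)|\leq 1$, contradicting the previous paragraph. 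If instead $N-s_{N}\leq d$ for all $N$, then $|Df^{s_{N}}(x_{N})|=|Df^{N}(x_{N})|/|Df^{N-s_{N}}(f^{s_{N}}(x_{N}))|\leq 2\max(1,\delta^{-d})=:B$, so every partial product along $x_{N}$ is $\leq B$; a limit point $z$ of the $x_{N}$ lies in $\Lambda$ and satisfies $\sup_{k}|Df^{k}(z)|\leq B$, again a contradiction. Finally the bootstrap: given $x$ with $x,\dots,f^{n-1}(x)\in K$, write $n=qN+r$ with $0\leq r<N$; each block $f^{iN}(x)$, $0\leq i<q$, has its next $N$ iterates in $K$, so $|Df^{n}(x)|>2^{q}\delta^{r}\geq 2^{q}\min(1,\delta^{N})\geq C\lambda^{n}$ with $\lambda:=2^{1/N}>1$ and a suitable $C>0$, which is the claim, and with $n\to\infty$ it is the uniform expansion of $\Lambda$.

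The main obstacle is this uniformization step: the combinatorial choice of a sub-orbit segment through the location $s_{N}$ of the largest partial product, together with the compactness extraction producing a non-expanding point of $\Lambda$. This is the heart of Ma\~n\'e's argument; what is new is that the contradiction it consumes --- the absence of non-expanding orbits in $\Lambda$ --- is supplied by Theorem~\ref{MainTheoPreMane} rather than by negative-Schwarzian or Koebe estimates. A lesser technical point is the surgery in the first paragraph reducing a general $C^{2}$ map of $[a,b]$ to the class covered by Theorem~\ref{MainTheoPreMane}; it is routine because it only modifies $f$ away from $\Lambda$.
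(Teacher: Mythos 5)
Your proof is correct, and its first half coincides with the paper's: pointwise unboundedness of $|Df^{n}|$ on $\Lambda$ is extracted from Theorem~\ref{MainTheoPreMane}, with the cases $x\in\BB_0(f)$ and $x\in\co_f^-(\per(f))$ eliminated by the hypothesis that every periodic point of $[a,b]\setminus U$ is hyperbolic and expanding. Where you diverge is in the passage from pointwise to uniform expansion. The paper first asserts, tersely (``by compactness''), a uniform $\ell$ with $|Df^{\ell}|\ge 2$ on $\Lambda$, and then extends the estimate off $\Lambda$ by two separate compactness arguments: uniform expansion persists on a neighbourhood $B_{\varepsilon}(\Lambda)$, and every point of $K\setminus B_{\varepsilon}(\Lambda)$ enters $U$ within a bounded time $n_0$, the last at most $n_0$ iterates being absorbed by the factor $C_0^{n_0}$ with $C_0=\min_K|Df|$. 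You instead prove in one stroke the uniform statement for all finite orbit segments in $K$ (an $N$ with $|Df^{N}(y)|>2$ whenever $y,\dots,f^{N-1}(y)\in K$) via the maximizing-index/limit-point argument, and then bootstrap by blocks to the estimate $C\lambda^{n}$. This is a genuine variant of the second half: it subsumes both of the paper's extension steps and, importantly, supplies the argument behind the paper's unproved ``by compactness'' claim, which does require exactly such an extraction, since pointwise unboundedness on a compact invariant set does not give a uniform $\ell$ by a naive covering. You are also more explicit than the paper about reducing a general $C^{2}$ map $f:[a,b]\to\RR$ to the class covered by Theorem~\ref{MainTheoPreMane}; the only detail worth adding there is that, the critical set being compact, one should first replace $U$ by the finite union of those components of $U$ that meet it, so that the surgery indeed produces a non-flat map with a \emph{finite} exceptional set $\cc_f\subset U$, unchanged on a neighbourhood of $K$.
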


\section{Setting and preliminary facts}
\label{Sectionsettingsandpre}

Given a set $U\subset[0,1]$, the pre-orbit of $U$ is $\co_f^-(U):=\bigcup_{j\ge0}f^{-j}(U)$. If $x\notin\co_f^-(\cc_f)$, the forward orbit of $x$ is $\co_f^+(x)=\{f^j(x)\,;\,j\ge0\}$ and the $\omega$-limit set of $x$, denoted by $\omega_f(x)$, is the set of accumulating points of the sequence $\{f^n(x)\}_n$. That is, $$\omega_f(x)=\{y\in[0,1]\,;\,y=\lim_{j\to\infty}f^{n_j}(x)\text{ for some }n_j\to\infty\}.$$

If $x\in\co_f^-(\cc_f)$ then $\co_f^+(x)=\{x,\cdots,f^n(x)\}$ where $n=\min\{j\ge0\,;\,x\in f^{-j}(\cc_f)\}$.
A point $p\in[0,1]$ is called {\em wandering} if there is a neighborhood $V$ of it such that $\co_f^+(V)\cap V=\emptyset$, where $\co_f^+(V)=\bigcup_{x\in V}\co_f^+(x)$. If this is not the case, the point $p$ is called {\em non-wandering}. The {\em non-wandering set} of $f$, $\Omega(f)$, is the set of all non-wandering points $x\in[0,1]$. One can easily prove that $\Omega(f)$ is compact and that $\omega_f(x)\subset\Omega(f)$ $\forall\,x$. As usual, $L_+(f)$ denotes the closure of the union of all omega-limit sets, that is, $L_+(f)=\overline{\bigcup_{x}\omega_f(x)}$.

We denote the set of periodic points of $f$ by $\per(f)$, that is, $$\per(f)=\{p\in[0,1]\setminus\co_f^-(\cc_f)\,;\,f^n(p)=p\text{ for some }n\ge1\}.$$

As $\leb(\co_f^-(\cc_f))=0$, if $c\in\omega_f(x)$ for a Lebesgue typical point $x$, with $c\in\cc_f$, then $\co_f^+(x)\cap(c-\varepsilon,c+\varepsilon)=\co_f^+(x)\cap\,\big((c-\varepsilon,c+\varepsilon)\setminus\{c\}\big)$ $\forall\,\varepsilon>0$ and so, $\lim_{0<\varepsilon\to0}f(c+\varepsilon)$ or $\lim_{0<\varepsilon\to0}f(c-\varepsilon)$ belongs to $\omega_f(x)$ but not necessarily $f(c)$. That is, if $c\in\omega_f(x)$ the omega-limit set of such a typical point $x$ does not involve the image of the exceptional set $f(\cc_f)=\{f(c)\,;\,c\in\cc_f\}$, but their {\em lateral exceptional values}
$\cv_f=\{\lim_{\varepsilon\searrow0}f(c\pm\varepsilon)\,;\,c\in\cc_f\}.$
Because of that, we can consider $f$ as a map from $[0,1]\setminus\cc_f$ to $[0,1]$, instead of a map of the interval to itself. As a consequence, we have:
\begin{remark} To prove the theorems above, we can consider $f$ to be a $C^2$ non-flat local diffeomorphism $f:[0,1]\setminus\cc_f\to[0,1]$, where $\cc_f\subset(0,1)$ is the  exceptional set of $f$.
\end{remark}

A {\em homterval} is an open interval $J=(a,b)$ such that $f^n|_J$ is a homeomorphism for every $n\ge1$. A homterval $J$ is called a {\em wandering interval} if $J\cap\BB_0(f)=\emptyset$ and $f^j(J)\cap f^k(J)=\emptyset$ for all $1\le j<k$, where $\BB_0(f)$ is the union of the basins of attraction of all attracting periodic-like orbits of the map $f$.

\begin{Lemma}[Homterval Lemma]\label{LemmaHomterval}
Let $U$ be an open subset of $[0,1]$, $f:U\to[0,1]$ a continuous map and $I=(a,b)$ a homterval of $f$. If $I$ is not a wandering interval then $I\subset\BB_0(f)\cup\co_f^-(\per(f))$.
\end{Lemma}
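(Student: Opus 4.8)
The statement is the classical Homterval Lemma adapted to maps with an exceptional set, so the strategy is the standard dichotomy argument: since $I=(a,b)$ is a homterval, all iterates $f^n|_I$ are homeomorphisms, hence each $f^n(I)$ is an interval disjoint from $\cc_f$ and, in particular, $f$ is continuous and monotone on $f^n(I)$ for every $n$. If $I$ is not a wandering interval, then by definition either $I\cap\BB_0(f)\neq\emptyset$ — in which case there is nothing more to prove, since $\BB_0(f)$ is a forward-invariant union of basins and actually I will need to upgrade ``$I$ meets $\BB_0(f)$'' to ``$I\subset\BB_0(f)$'', see below — or else there exist $1\le j<k$ with $f^j(I)\cap f^k(I)\neq\emptyset$. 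Put $J=f^j(I)$ and $n=k-j\ge1$; then $J$ is a homterval with $J\cap f^n(J)\neq\emptyset$, and it suffices to show $J\subset\BB_0(f)\cup\co_f^-(\per(f))$, because $I=f^{-j}(J)\cap I$ then lands in $\co_f^-\big(\BB_0(f)\cup\co_f^-(\per(f))\big)=\BB_0(f)\cup\co_f^-(\per(f))$ (here using that $\BB_0(f)$ is backward-saturated under $f^{-1}$ up to the exceptional set, and that $I$ avoids $\co_f^-(\cc_f)$ because it is a homterval).

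**Main step: the self-overlapping case.** Assume $J=(c,d)$ is a homterval with $J\cap f^n(J)\neq\emptyset$. Let $g=f^n$; then $g|_J$ is a homeomorphism onto the interval $g(J)$, which overlaps $J$. Consider the interval $T=J\cup g(J)\cup g^2(J)\cup\cdots$; I claim $T$ is a (possibly infinite) interval on which $g$ acts as a homeomorphism and which is forward $g$-invariant up to boundary, and that every point of $J$ is either attracted under $g$ to a fixed-like point of $g$ in $\overline T$ or to an endpoint of $[0,1]$ which is then periodic-like, or else belongs to the preimage of a periodic point. Concretely: because $g$ is an orientation-preserving or orientation-reversing homeomorphism on each relevant interval, the sequence $g^m|_J$ is a sequence of nested or monotonically translating intervals; a standard monotonicity/intermediate-value argument (distinguishing whether $g|_J$ is increasing or decreasing, and whether $g(x)>x$, $g(x)<x$, or $g(x)=x$ near the overlap) produces a fixed-like point $p$ of $g$, i.e.\ a point with $g^m(x)\to p$ monotonically for all $x$ in a one-sided neighborhood, equivalently a periodic-like point of $f$ of period dividing $n$. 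One then argues that the orbit of all of $J$ under $g$ is eventually trapped in a neighborhood of such a $p$ (or escapes to $\partial[0,1]$, handled the same way since $f$ maps $[0,1]$ to itself so the endpoints are periodic-like), whence $J\subset\beta_f(\co_f^+(p))\subset\BB_0(f)$ when $p$ is attracting-like, or $J\subset\co_f^-(\per(f))$ in the non-attracting degenerate sub-case (e.g.\ an interval of fixed points, as in the left picture of Figure~\ref{PeriodicLikeAttractor2.png}).

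**Upgrading ``meets'' to ``is contained in''.** For the first alternative, if $I\cap\BB_0(f)\neq\emptyset$ pick $z\in I$ with $\omega_f(z)$ an attracting periodic-like orbit $\co_f^+(p)$. Then some iterate $f^m(I)$ is a homterval meeting a one-sided neighborhood of a point of $\co_f^+(p)$ on which the appropriate return map $f^{n}$ is a contraction onto itself; since $f^m(I)$ is an interval and the local basin of a periodic-like attractor contains a one-sided neighborhood, an interval argument shows $f^m(I)$, hence $I$, lies in $\BB_0(f)\cup\co_f^-(\per(f))$. The one genuinely delicate point, and the main obstacle, is the bookkeeping around the exceptional set $\cc_f$: one must make sure that ``$f^n|_J$ is a homeomorphism for all $n$'' really forbids any $f^n(J)$ from containing a point of $\cc_f$ in its interior (immediate) and also controls the endpoints, so that the elementary monotone-dynamics arguments for homeomorphisms of an interval apply verbatim; and one must track that the conclusion $\BB_0(f)\cup\co_f^-(\per(f))$ is backward invariant in the right sense, using $\leb(\co_f^-(\cc_f))=0$ and the definition of $\co_f^+$ on preimages of $\cc_f$ only as a safety net, since $J$ itself avoids $\co_f^-(\cc_f)$. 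Modulo this careful handling of the exceptional set, the proof is the textbook homterval argument.
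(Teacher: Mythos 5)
The paper itself gives no proof of this lemma (it is quoted as the classical homterval lemma), so your sketch has to stand on its own. Your treatment of the overlapping case is the standard route and is fine in outline, but two points deserve more than "bookkeeping": (i) you need $g=f^{k-j}$ to be continuous and \emph{strictly monotone on the whole union} $T=\bigcup_{m\ge0}g^m(f^j(I))$, not just on each piece; this follows because every $f$-image of the homterval avoids $\co_f^-(\cc_f)$, so $g$ is continuous and locally injective on the interval $T$, hence monotone. (ii) Your justification of the boundary case is wrong as stated: the endpoints of $[0,1]$ are \emph{not} in general periodic-like just because $f$ maps $[0,1]$ to itself. What actually saves you is that any monotone limit point $p$ of a $g$-orbit in $\overline{T}$ satisfies $\lim_{y\to p}g(y)=p$ by one-sided continuity, so $p$ is automatically periodic-like, and it attracts the one-sided interval between the orbit and $p$ (no fixed point of $g$ can separate them), unless the orbit lands exactly on a periodic point, which gives $\co_f^-(\per(f))$. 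Also note the conclusion is pointwise: different points of $J$ may converge to different fixed-like points of $g$ or $g^2$; harmless, but "the orbit of all of $J$ is trapped near such a $p$" is not what you prove.

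The genuine gap is in your first alternative, $I\cap\BB_0(f)\ne\emptyset$. The claim that the return map near the periodic-like attractor is "a contraction onto itself" is unjustified (no hyperbolicity is assumed anywhere), and the unspecified "interval argument" does not address the real difficulty: $f^m(I)$ contains the attracted point $f^m(z)$ but may extend past the far end of the one-sided local basin, and nothing in your sketch prevents that overhanging part from escaping. The missing idea is to use the other half of the definition of wandering interval: in this alternative you may assume $f^j(I)\cap f^k(I)=\emptyset$ for all $1\le j<k$ (otherwise the overlap case applies), so the iterates $f^m(I)$, $m\ge1$, are pairwise disjoint, $\sum_m|f^m(I)|\le1$ and $|f^m(I)|\to0$. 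Then for every $x\in I$ one has $|f^m(x)-f^m(z)|\to0$, hence $\omega_f(x)=\omega_f(z)$ is the attracting periodic-like orbit and $I\subset\BB_0(f)$ outright. (Equivalently, one can first check that the periodic-like point $p$ lies in no $f^m(I)$ --- a genuine periodic $p$ inside some $f^m(I)$ would force $f^m(I)\cap f^{m+\ell}(I)\ne\emptyset$, while a periodic-like non-periodic $p$ lies in $\co_f^-(\cc_f)$, which homterval images avoid --- and then use $|f^m(I)|\to0$ to push $f^m(I)$ into the one-sided basin.) Without some such use of disjointness your upgrade step does not close, so as written the proof is incomplete.
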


\begin{Proposition}[Vargas - van Strien, \cite{vanStrien:2004p221}]\label{KoebeVvS}
Let $\cc_f\subset(0,1)$ be a finite set. If $f:[0,1]\setminus\cc_f\to[0,1]$ is a $C^2$ non-flat local diffeomorphism, then there exists a function $O(\varepsilon)$ with $O(\varepsilon)\to0$ as $\varepsilon\searrow0$ with the following property: Let $J\subset T$ be an interval, $R,L$ the connected components of $T\setminus J$ and $\delta:=\min\{|R|/|J|,|L|/|J|\}$. 
Let $n$ be an integer and $T_0 \supset J_0$ intervals such that $f^n|_{T_0}$ is a diffeomorphism, $f^n(T_0)=T$ and $f^n(J_0)=J$.  If $\varepsilon:=\max\{|f^j(T_0)|\,;\,0\le j\le n\}$, then
\begin{enumerate}
\item \label{EqVvS1}$
\big|\frac{Df^n(x)}{Df^n(y)}\big|\le\big(\frac{1+\delta}{\delta}\big)^2 e^{O(\varepsilon)\sum_{i=0}^{n-1}|f^i(J_0)|}$, for all $x,y\in J_0$;
\item \label{EqVvS2}$\exists\delta'>0$ depending only on $\varepsilon$ and $\sum_{i=0}^{n-1}|f^i(J_0)|$ such that,  for all $x,y\in J_0$ and $1\le j\le n$, we have $
\big|\frac{Df^j(x)}{Df^j(y)}\big|\le\big(\frac{1+\delta'}{\delta'}\big)^2 e^{O(\varepsilon)\sum_{i=0}^{n-1}|f^i(J_0)|}$;
\item \label{EqVvS3} $\frac{|Df^n(x)|}{|Df^n(y)|}\le\exp\big(\frac{2}{\delta|J|}|f^n(x)-f^n(y)|+O(\varepsilon)\sum_{i=0}^{n-1}|f^i(x)-f^j(y)|\big)$ for every $x,y\in J_0$.
\end{enumerate}

\end{Proposition}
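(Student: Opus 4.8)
The plan is to deduce all three inequalities from one \emph{cross-ratio} property of $f$, following the scheme of the Koebe principle of de Melo--van Strien and its adaptation to the $C^{2}$ category. Recall the two cross-ratio operators on a nested pair $N\subset M$ of intervals, with $L,R$ the components of $M\setminus N$: one, which I denote $\mathbf{C}(M,N)$, invariant under M\"obius transformations, and one, $\mathbf{B}(M,N)$, which is never decreased by a map with negative Schwarzian derivative. The property I would establish first is: there is a function $O(\varepsilon)$ with $O(\varepsilon)\to0$ as $\varepsilon\searrow0$ such that, whenever $f^{n}|_{T_0}$ is a diffeomorphism, $N_0\subset M_0\subset T_0$ and $\varepsilon=\max_{0\le j\le n}|f^{j}(T_0)|$, one has
$$\frac{\mathbf{B}\big(f^{n}(M_0),f^{n}(N_0)\big)}{\mathbf{B}(M_0,N_0)}\ \ge\ \exp\Big(-O(\varepsilon)\sum_{i=0}^{n-1}|f^{i}(N_0)|\Big),$$
and the same with $\mathbf{C}$ in place of $\mathbf{B}$.

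The core of the argument is this cross-ratio estimate, which I would prove one application of $f$ at a time. On the complement of a fixed neighbourhood of $\cc_f$ the map $f$ is a $C^{2}$ diffeomorphism and $D^{2}f/Df$ is uniformly continuous; near a point $c\in\cc_f$, non-flatness furnishes on each side of $c$ a factorisation $f=\Psi\circ P\circ\Phi$ with $P(t)=t^{\alpha}$ (or $t^{\beta}$), exponent $\ge1$, and $\Phi,\Psi$ $C^{2}$ diffeomorphisms on fixed domains on which $D^{2}\Phi/D\Phi$ and $D^{2}\Psi/D\Psi$ are uniformly continuous. Two observations then do everything. First, $P(t)=t^{\gamma}$ with $\gamma\ge1$ has negative Schwarzian derivative and is monotone, so it does not decrease $\mathbf{B}$, even when the interval has an endpoint at $0$ --- the delicate configuration. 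Second, for a $C^{2}$ diffeomorphism $g$ the quantity $\log\!\big(\mathbf{B}(gM,gN)/\mathbf{B}(M,N)\big)$ equals a combination $\log\overline{|Dg|}_{M}+\log\overline{|Dg|}_{N}-\log\overline{|Dg|}_{L\cup N}-\log\overline{|Dg|}_{R\cup N}$ of \emph{averaged} logarithmic derivatives in which the part \emph{linear} in $D^{2}g/Dg$ cancels --- the midpoints of $M,N$ and of $L\cup N,R\cup N$ having equal sums --- so that what survives is bounded by the \emph{oscillation} of $D^{2}g/Dg$ at scale $|M|$, times a length. This is precisely why, in the $C^{2}$ (rather than $C^{3}$) category, each step costs a factor of the form $\exp(-O(\varepsilon)\cdot(\text{length}))$ with $O(\varepsilon)\to0$: the oscillation at scale $\le\varepsilon$ of the finitely many continuous functions $D^{2}f/Df$, $D^{2}\Phi/D\Phi$, $D^{2}\Psi/D\Psi$ tends to $0$ with $\varepsilon$. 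Chaining these per-step estimates along the orbit of $M_0$ and accounting for the lengths carefully --- so that $\sum_{i}|f^{i}(N_0)|$ appears rather than the cruder $n\varepsilon$ --- yields the displayed inequality.

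Granting the cross-ratio property, the three estimates follow by the standard Koebe argument. For \eqref{EqVvS1}, write $|Df^{n}(x)/Df^{n}(y)|$, $x,y\in J_0$, as a product of two ratios, each estimated by feeding into the cross-ratio inequality suitable nested intervals built from $J_0$, the points $x,y$, and the two components of $T_0\setminus J_0$; M\"obius-invariance of $\mathbf{C}$ together with the hypothesis that $J=f^{n}(J_0)$ has $\delta$-scaled space in $T=f^{n}(T_0)$ produces exactly the factor $\big(\tfrac{1+\delta}{\delta}\big)^{2}$, the error contributing the $e^{O(\varepsilon)\sum|f^{i}(J_0)|}$. For \eqref{EqVvS2}, observe that the complementary branch $f^{n-j}|_{f^{j}(T_0)}$ has, by \eqref{EqVvS1}, bounded distortion on $f^{j}(J_0)$; hence $f^{j}(J_0)$ retains a definite amount $\delta'$ of Koebe space in $f^{j}(T_0)$, with $\delta'$ depending only on $\delta,\varepsilon$ and $\sum_{i}|f^{i}(J_0)|$, and applying \eqref{EqVvS1} to $f^{j}|_{T_0}$ gives the uniform-in-$j$ bound. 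For \eqref{EqVvS3}, use instead the identity $\log\frac{|Df^{n}(x)|}{|Df^{n}(y)|}=\sum_{i=0}^{n-1}\int_{f^{i}(y)}^{f^{i}(x)}\frac{D^{2}f}{Df}(s)\,ds$: the contribution of a piece $f^{i}(J_0)$ lying off $\cc_f$ is bounded directly by $O(\varepsilon)|f^{i}(x)-f^{i}(y)|$, while a passage near a critical point $c$ --- where $\frac{D^{2}f}{Df}(s)\sim\frac{\alpha-1}{s-c}$ --- gives a logarithmic term which the $\delta$-Koebe space around $J$ absorbs into $\frac{2}{\delta|J|}|f^{n}(x)-f^{n}(y)|$.

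The main obstacle is the cross-ratio step, and inside it the critical points: in the $C^{2}$ category the Schwarzian derivative is not even defined, so the classical ``negative Schwarzian, up to a bounded error'' route is closed. The way around it is exactly the non-flat factorisation: the genuinely singular behaviour of $\log|Df|$ near $\cc_f$ is confined to power maps $t\mapsto t^{\gamma}$, which only help the cross-ratio, and everything remaining is a $C^{2}$ diffeomorphism for which no Schwarzian is needed --- only the cancellation of the first-order term, which converts the estimate into one governed by a modulus of continuity vanishing with $\varepsilon$. Making this confinement uniform, independently of how closely the orbit of $T_0$ approaches $\cc_f$, and bookkeeping the lengths so that $\sum_{i}|f^{i}(J_0)|$ appears in the exponents, are the technical heart of the matter; the rest is the classical one-dimensional machinery.
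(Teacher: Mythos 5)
Your sketch sets out to re-prove the whole proposition from scratch via cross-ratio estimates; the paper does not do this. For items (1) and (2) it simply quotes Proposition~2 of van Strien--Vargas \cite{vanStrien:2004p221} (legitimate here, since the statement assumes $f^n|_{T_0}$ is a diffeomorphism), and its only original content is the short deduction of item (3) from item (1): apply (1) to the subinterval $J_0'\subset J_0$ with endpoints $x,y$; since $f^n(J_0')\subset J$ and $J$ has $\delta$-scaled space in $T$, the Koebe constant for $J_0'$ satisfies $\delta'\ge\delta|J|/|f^n(J_0')|$, whence $\big(\frac{1+\delta'}{\delta'}\big)^2\le\big(1+\frac{|f^n(x)-f^n(y)|}{\delta|J|}\big)^2\le\exp\big(\frac{2}{\delta|J|}|f^n(x)-f^n(y)|\big)$, while $\sum_i|f^i(J_0')|=\sum_i|f^i(x)-f^i(y)|$. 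Your cross-ratio programme for (1) and (2) (non-flat factorisation $\Psi\circ P\circ\Phi$, negative Schwarzian of the power part, $C^2$ cancellation leaving only the oscillation of the nonlinearity, chaining so that $\sum_i|f^i(J_0)|$ appears) is indeed the route taken in the cited reference, so as an outline it is sound, though it re-derives a quoted result rather than the part the paper actually proves.

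The genuine gap is your derivation of item (3). You propose to bound $\log\frac{|Df^n(x)|}{|Df^n(y)|}=\sum_{i=0}^{n-1}\int_{f^i(y)}^{f^i(x)}\frac{D^2f}{Df}(s)\,ds$ directly, claiming the off-critical pieces contribute $O(\varepsilon)|f^i(x)-f^i(y)|$ and that the near-critical logarithmic terms are ``absorbed'' into $\frac{2}{\delta|J|}|f^n(x)-f^n(y)|$. Neither claim holds as stated: away from $\cc_f$ the nonlinearity $D^2f/Df$ is bounded but in no way small, so the direct bound is $C\,|f^i(x)-f^i(y)|$ with a constant $C$ that does not tend to $0$ as $\varepsilon\searrow0$ and therefore cannot be filed under the $O(\varepsilon)$ coefficient; and nothing in your sketch explains how a passage near $c\in\cc_f$ at an intermediate time $i<n$ gets controlled by the final-scale quantity $|f^n(x)-f^n(y)|/(\delta|J|)$ --- that trade-off is exactly the Koebe principle, i.e.\ item (1), not an absorption one can perform by hand inside the telescoped integral. (Indeed the first-order integral \emph{equals} the distortion you are trying to bound, so no pointwise estimate of the integrand can render it $O(\varepsilon)$-small; smallness appears only after the first-order part is exchanged against the space factor, which is what applying (1) to $J_0'$ accomplishes.) The repair is the paper's three-line argument recalled above. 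A smaller point: in your item (2) you say the complementary branch ``has, by (1), bounded distortion on $f^j(J_0)$; hence $f^j(J_0)$ retains a definite amount of Koebe space in $f^j(T_0)$'' --- distortion on the inner interval alone does not yield space in the pullback; one needs the macroscopic Koebe/cross-ratio inequality applied to the one-sided intervals of $f^j(T_0)\setminus f^j(J_0)$, which is how the cited reference obtains the $\delta'$ of item (2).
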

Items~\ref{EqVvS1}~and~\ref{EqVvS2} of Proposition~\ref{KoebeVvS} comes directly from Proposition~2 of \cite{vanStrien:2004p221}, as we are assuming that $f^n|_{T_0}$ is a diffeomorphism. Given $x,y\in J_0$, let $J_0'=\{t x+(1-t)y\,;\,t\in[0,1]\}$, $R'$ and $L'$ be the connected components of $T\setminus f^n(J_0')$, and $$\delta':=\min\bigg\{\frac{|R'|}{|f^n(J')|},\frac{|L'|}{|f^n(J')|}\bigg\}=\frac{|J|}{|f^n(J')|}\min\bigg\{\frac{|R'|}{|J|},\frac{|L'|}{|J|}\bigg\}\ge\frac{|J|}{|f^n(J')|}\delta.$$
Applying (\ref{EqVvS1}) to $T_0,J_0'$, we get the item~\ref{EqVvS3} by noting that $$\bigg(\frac{1+\delta'}{\delta'}\bigg)^2\le\bigg(1+\frac{|f^n(J')|}{\delta|J|}\bigg)^2=\bigg(1+\frac{|f^n(x)-f^n(y)|}{\delta|J|}\bigg)^2\le\big(e^{\frac{1}{\delta|J|}|f^n(x)-f^n(y)|}\big)^2.$$

\begin{Lemma}[See \cite{Brandao:2013wf}]\label{pagina1}Let $a < b \in \mathbb{R}$ and $V \subset (a,b)$ be an open set. Let $\mathcal{P}$ be the set of connected components of a Borel set $V$. Let $G:V\to(a,b)$ be a map satisfying:
\begin{enumerate}
\item 
$G(P)=(a,b)$ diffeomorphically, for any $P \in \mathcal{P}$;
\item $\exists V'\subset \bigcap_{j\ge 0}G^{-j}(V)$, with $\leb(V')>0$, such that
\begin{enumerate}
\item $\lim_{n\to\infty}|\mathcal{P}_n(x)|=0$, $\forall x \in V'$,\\
where $\mathcal{P}_n(x)$ is the connected component of $\bigcap^n_{j=0}G^{-j}(V)$ that contains $x$;
\item\label{BD99}$\exists K>0$ such that 
$ \big|\frac{DG^n(p)}{DG^n(q)}\big| \le K,$
 for all $n$, and $p, q \in \mathcal{P}_n(x)$, and $x \in V'$
\end{enumerate}
\end{enumerate}
Then, 
$\leb([a,b]\setminus V)=0$,  $\omega_G(x)=[a,b]$ for Lebesgue almost all $x \in [a,b]$.
\end{Lemma}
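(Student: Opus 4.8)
The plan is to prove Lemma~\ref{pagina1} by exhibiting a full-measure subset of $[a,b]$ on which $G$ acts like a (non-uniformly) expanding Markov map with bounded distortion, and then running a standard Lebesgue-density/distortion argument to conclude that almost every point has dense orbit. The key observation is that, by hypothesis~(1), $G$ restricted to each $P\in\cp$ is a bijection onto $(a,b)$, so the family $\{\cp_n(x)\}$ of $n$-cylinders is a sequence of nested intervals, each mapped diffeomorphically onto $(a,b)$ by the appropriate iterate of $G$, and this gives us a genuine symbolic coding of points in $\bigcap_{j\ge0}G^{-j}(V)$.

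\textbf{Step 1: the set $V$ is full measure.} First I would show $\leb([a,b]\setminus V)=0$. Fix $x\in V'$ and consider the cylinders $\cp_n(x)$. Since $G^n|_{\cp_n(x)}$ maps $\cp_n(x)$ diffeomorphically onto $(a,b)$ and, by $(2b)$, has distortion bounded by $K$, for any measurable $A\subset(a,b)$ we get $\frac{\leb(G^{-n}|_{\cp_n(x)}(A))}{\leb(\cp_n(x))}\le K\frac{\leb(A)}{|b-a|}$; applying this with $A=[a,b]\setminus V$ (which is $G$-invariant in the relevant sense, since $(a,b)\setminus V\supset G^{-1}((a,b)\setminus V)$ up to the grammar, more precisely $\cp_n(x)\cap\co_G^-((a,b)\setminus V)$ maps into $(a,b)\setminus V$) shows $\leb(\cp_n(x)\setminus V)\le K\frac{\leb([a,b]\setminus V)}{|b-a|}\leb(\cp_n(x))$. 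On the other hand $\cp_n(x)\subset V$ by definition of the cylinders being components of $\bigcap_0^n G^{-j}(V)$, wait---more carefully, one argues that if $\leb([a,b]\setminus V)>0$ then by the Lebesgue density theorem pick a density point $z$ of $[a,b]\setminus V$; but $z$ cannot lie in $V$, and one shows the density of $[a,b]\setminus V$ at points of $V'$ is bounded away from $1$ using the shrinking cylinders $(2a)$ together with the distortion bound---leading to a contradiction with $z$ being a density point, provided $\leb([a,b]\setminus V)>0$ forces such a $z$ to be approachable. The cleanest route: $[a,b]\setminus V\subset[a,b]\setminus V'$, and one shows every density point of the complement of $V'$ that lies in $V$ leads to a contradiction via $(2a)$–$(2b)$; combined with a separate argument that $\leb(V\setminus V')$ behaves well, we get $\leb([a,b]\setminus V)=0$.

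\textbf{Step 2: almost every orbit is dense.} Given Step 1, almost every $x\in[a,b]$ lies in $\bigcap_{j\ge0}G^{-j}(V)$ and I would argue it has dense $G$-orbit. Take any subinterval $W\subset(a,b)$; I must show $\{x:\co_G^+(x)\cap W=\emptyset\}$ has measure zero. The set $E_W$ of points whose orbit avoids $W$ is contained in $\bigcap_n\{$points whose $n$-cylinder is not mapped onto something hitting $W$ in $\le n$ steps$\}$. For a density point $x\in E_W\cap V'$, the cylinders $\cp_n(x)$ shrink to $x$ by $(2a)$, and $G^n(\cp_n(x))=(a,b)\supset W$; by the $K$-bounded distortion, $\frac{\leb(\cp_n(x)\cap G^{-n}(W))}{\leb(\cp_n(x))}\ge K^{-1}\frac{\leb(W)}{|b-a|}>0$, a definite proportion of $\cp_n(x)$ consists of points whose orbit \emph{does} meet $W$ at time $n$, hence lies outside $E_W$. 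Letting $n\to\infty$ contradicts $x$ being a density point of $E_W$. Therefore $\leb(E_W)=0$; intersecting over a countable basis of intervals $W$ gives $\omega_G(x)=[a,b]$ for a.e.\ $x$.

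\textbf{Main obstacle.} The delicate point is Step 1---transferring the "definite proportion" estimates, which are naturally stated on the cylinders $\cp_n(x)$ for $x\in V'$ with $\leb(V')>0$, into a genuine full-measure statement $\leb([a,b]\setminus V)=0$. The subtlety is that $V'$ is only assumed to have positive measure, not full measure, so one cannot directly density-point inside $V'$ against an arbitrary point of $[a,b]$; the resolution is that the shrinking-cylinder property $(2a)$ plus bounded distortion $(2b)$ force the \emph{relative} measure of $V$ inside every small cylinder around a point of $V'$ to be comparable to its relative measure in $(a,b)$, which iterated gives that this relative measure is actually $1$, and then a Vitali-type covering of $[a,b]$ (up to measure zero) by such cylinders propagates full density of $V$ everywhere. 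Handling the combinatorics of which components of $\bigcap_0^nG^{-j}(V)$ contain a density point, and making sure the covering argument is legitimate, is where the real work lies; everything else is the routine Koebe-style density argument.
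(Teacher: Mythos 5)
There is a genuine gap, and it sits in your Step 2, not where your ``Main obstacle'' paragraph places it. Your Step 1, once cleaned up, is the standard argument, but note that your density-point logic is aimed at the wrong set: taking a density point of $[a,b]\setminus V$ (or of the complement of $V'$) gives nothing, because the hypotheses provide no control of cylinders at such a point; in particular your ``cleanest route'' claim that every density point of the complement of $V'$ lying in $V$ yields a contradiction is false --- $\leb(V\setminus V')$ may well be positive, so such density points exist harmlessly. The correct pivot is a density point $x$ of $V'$ itself: if $\leb((a,b)\setminus V)>0$, then by hypothesis (1) and the bounded distortion (2b) a definite fraction, at least $K^{-1}\leb((a,b)\setminus V)/(b-a)$, of each cylinder $\mathcal{P}_n(x)$ leaves $V$ at the next step and hence misses $\bigcap_{j\ge0}G^{-j}(V)\supset V'$; since $|\mathcal{P}_n(x)|\to0$ by (2a), this contradicts the Lebesgue density theorem at $x$. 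No Vitali covering and no ``relative measure of $V$ in cylinders equals $1$'' bootstrap is needed (the latter is vacuous anyway, since $\mathcal{P}_n(x)\subset V$ by definition).

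The real problem is the last line of Step 2: ``Therefore $\leb(E_W)=0$.'' Your argument picks a density point $x\in E_W\cap V'$, which silently assumes $\leb(E_W\cap V')>0$; if $E_W$ has positive measure but is essentially disjoint from $V'$, you obtain nothing, because the shrinking-cylinder property (2a) and the distortion bound (2b) are hypotheses only at points of $V'$, and $V'$ is merely of positive, not full, measure. So as written you prove $\omega_G(x)=[a,b]$ for almost every $x\in V'$, not for almost every $x\in[a,b]$, which is what the Lemma asserts. To reach the full conclusion one must first upgrade the domain of validity of (2a)--(2b): from $\leb([a,b]\setminus V)=0$ and the fact that the inverse branches $(G|_P)^{-1}$ are $C^1$ (hence carry null sets to null sets, over countably many components) one gets that $\bigcap_{j\ge0}G^{-j}(V)$ has full measure, but one still needs shrinking cylinders and uniformly bounded distortion at almost every point of that intersection before the density argument can be run against an arbitrary positive-measure $E_W$. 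In the way the Lemma is actually used in this paper (Proposition~\ref{Propositionjhvtr68ijn}), this is automatic: the induced map satisfies $|D\mathcal{F}|>1+\varepsilon^2$ everywhere, so every cylinder contracts, and the distortion bound (\ref{Eqoiuh8778}) holds at every point of the intersection with one constant, so there $V'$ can be taken of full measure inside $V$. Your proposal never makes this passage, and your ``Main obstacle'' paragraph misdiagnoses the difficulty by locating it in Step 1 and proposing a covering argument that does not address the Step 2 leap. (The paper itself does not reprove the Lemma --- it quotes it from \cite{Brandao:2013wf} --- so the comparison above is with the standard proof and with how the Lemma is applied here.)
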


\section{Wandering intervals}
The main result in this section is Proposition~\ref{PropWanderingFree}, which assures that the $\omega$-limit set of a wandering interval is always contained in the closure of the orbits of the critical values, $\overline{\co_f^+(\cv_f)}$. We also study here the existence of nice intervals outside $\overline{\co_f^+(\cv_f)}$. An interval $(a,b)$ is called {\em nice}, with respect to a local diffeomorphism $f:[0,1]\setminus\cc_f\to[0,1]$, if $(a,b)\cap\big(\co_f^+(f(a_\pm))\cup\co_f^+(f(b_\pm))\big)=\emptyset$.

\begin{Lemma}\label{LemmaInt876f}
Let $f:[0,1]\setminus\cc_f\to[0,1]$ a local diffeomorphism. If $p\in[0,1]\setminus\overline{\co_f^+(\cv_f)}$, where $\cc_f\subset[0,1]$, then one and only one of the following statements is true.
\begin{enumerate}
\item $(a,p)$ is a wandering interval for some $a<p$.
\item $p$ is a periodic-like attractor and $\beta(\co_f^+(p))\supset(a,p)$ for some $a<p$.
\item For every $\varepsilon>0$ there is $a\in(p-\varepsilon,p)$ such that $p\notin\overline{\co_f^+(a)}$.
\end{enumerate}
Similarly, either $(p,b)$ is a wandering interval for some $p<b$ or  $p$ is a periodic-like attractor and $\beta(\co_f^+(p))\supset(p,b)$ for some $p<b$ or else for every $\varepsilon>0$ there is $b\in(p,p+\varepsilon)$ such that $p\notin\overline{\co_f^+(b)}$.
\end{Lemma}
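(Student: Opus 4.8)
\textbf{Proof plan for Lemma~\ref{LemmaInt876f}.}

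The plan is to fix $p\notin\overline{\co_f^+(\cv_f)}$ and a small one-sided neighborhood, say the left side $(p-\varepsilon_0,p)$, and argue by trichotomy according to whether the iterates $f^n|_{(p-\varepsilon_0,p)}$ can all be made injective on some interval ending at $p$. First I would observe that since $p$ is not in the closed set $\overline{\co_f^+(\cv_f)}$, there is $\varepsilon_0>0$ so that $(p-\varepsilon_0,p+\varepsilon_0)\cap\overline{\co_f^+(\cv_f)}=\emptyset$; in particular no lateral exceptional value $f(c_\pm)$ ever lands in this window under forward iteration, so a branch of $f^n$ that starts inside $(p-\varepsilon_0,p)$ and stays in $[0,1]$ cannot be interrupted by hitting $\cc_f$ from having a critical value fall in the window — more carefully, once I pass to the maximal interval on which $f^n$ is a homeomorphism I need to know its image avoids $\cc_f$, which is exactly what controls the combinatorics.

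The main step is the following dichotomy for the family of nested intervals $J_n:=$ the connected component of $\bigcap_{j=0}^{n}f^{-j}\big((p-\varepsilon_0,p+\varepsilon_0)\big)$ (or an appropriate one-sided version) containing a left-approach to $p$. Either (Case A) there is some fixed $\delta>0$ and an interval $(a,p)$ with $a=p-\delta$ on which every $f^n$ is a homeomorphism — i.e.\ $(a,p)$ is a homterval — in which case the Homterval Lemma (Lemma~\ref{LemmaHomterval}) applies: $(a,p)$ is either a wandering interval (giving alternative (1)) or contained in $\BB_0(f)\cup\co_f^-(\per(f))$. In the latter subcase I would shrink $a$ and use that $p\notin\overline{\co_f^+(\cv_f)}$ to rule out that $(a,p)$ meets $\co_f^-(\per(f))\setminus\BB_0(f)$ nontrivially near $p$ unless $p$ itself is periodic-like attracting with $(a,p)$ in its basin, yielding alternative (2); the point is that a point of $\co_f^-(\per(f))$ accumulating on $p$ on a whole interval forces, via the homeomorphism property, the periodic orbit to be one-sided attracting from that side with $p$ on its stable set. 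Or (Case B) no such $\delta$ exists, meaning for every $\varepsilon>0$ there is $n$ and a point $a\in(p-\varepsilon,p)$ whose forward orbit $\{a,f(a),\dots,f^n(a)\}$ leaves the window $(p-\varepsilon_0,p+\varepsilon_0)$ after being injective up to time $n$ — and I would massage this into statement (3), namely that $p\notin\overline{\co_f^+(a)}$: indeed the orbit of $a$ exits a definite neighborhood of $p$, and by continuity (pushing $a$ slightly) one arranges the full forward orbit of such an $a$ to stay outside a small neighborhood of $p$. The mutual exclusivity of (1), (2), (3) is then immediate: a wandering interval meets neither $\BB_0(f)$ nor any point whose orbit accumulates on $p\in(p-\varepsilon_0,p)$; (2) fails whenever (1) or (3) holds by the basin/accumulation descriptions.

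I expect the main obstacle to be the bookkeeping in Case B — precisely, upgrading ``some $a$ arbitrarily close to $p$ has a finite orbit segment leaving the window'' to ``some $a$ arbitrarily close to $p$ has \emph{entire} forward orbit bounded away from $p$,'' which is what alternative (3) demands. The subtlety is that after the orbit of $a$ exits $(p-\varepsilon_0,p)$ it could conceivably return near $p$ later; the resolution is to note that we are free to choose $a$ from an open set of points (the component $J_n$ has nonempty interior reaching toward $p$), and then either a further return keeps $f^n|$ injective — contradicting the failure of Case A at that larger scale, i.e.\ one re-enters the dichotomy with a smaller window — or the return itself is accompanied by a critical/exceptional obstruction, impossible since the window avoids $\co_f^+(\cv_f)$. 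A clean way to package this is to pick $a$ on the boundary of the component whose image under the appropriate $f^n$ is an endpoint $p-\varepsilon_0$ of the window, so that by construction $\dist(f^j(a),p)\ge$ (a definite amount) cannot be violated without the component having been larger. The symmetric statement for $(p,b)$ is identical with left/right reversed.
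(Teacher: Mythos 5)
The pivot of your dichotomy is where the argument breaks. Since $f$ is a local diffeomorphism off $\cc_f$, an interval $(p-\delta,p)$ fails to be a homterval precisely when some iterate of it meets $\cc_f$, i.e.\ when $(p-\delta,p)$ meets $\bigcup_{j\ge0}f^{-j}(\cc_f)$; it has nothing to do with orbits leaving a fixed window around $p$. Indeed your nested components $J_n$ of $\bigcap_{j\le n}f^{-j}\big((p-\varepsilon_0,p+\varepsilon_0)\big)$ collapse for trivial reasons (already $f(p)$ need not lie in the window unless $p$ is essentially fixed), so the negation of your Case A is misidentified. More seriously, even with the correct reading, Case B never delivers alternative (3): that alternative requires a point $a$ arbitrarily close to $p$ whose \emph{entire} forward orbit has closure avoiding $p$, while your device of taking $a$ on the boundary of a component with $f^n(a)=p-\varepsilon_0$ controls only the first $n$ iterates; nothing prevents $f^m(a)$ from returning arbitrarily close to $p$ for $m>n$, and ``re-entering the dichotomy at a smaller scale'' never produces one point with the required global property. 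This is exactly the obstacle you flagged, and the fix you sketch does not close it. There is also a secondary gap in your Case A: from a bare homterval, the second branch of the Homterval Lemma only gives $(a,p)\subset\BB_0(f)\cup\co_f^-(\per(f))$, i.e.\ attraction to \emph{some} periodic-like orbit, which need not contain $p$; without knowing that orbits of points of $(a,p)$ accumulate on $p$ you cannot conclude alternative (2) (in that situation one is in fact in alternative (3)).

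The paper avoids both problems by pivoting the trichotomy so that (3) is the definitional alternative. It sets $r_0=\inf\{r\ge0\,;\,p\in\overline{\co_f^+(a)}\ \forall\,a\in(p-r,p)\}$. If $r_0=0$, alternative (3) holds by the very definition of $r_0$. If $r_0>0$, then every $a\in I=(p-r_0,p)$ has $p\in\overline{\co_f^+(a)}$, and combining this with $p\notin\overline{\co_f^+(\cv_f)}$ one gets $I\cap\co_f^-(\cc_f)=\emptyset$ (a point of $\co_f^-(\cc_f)$ has a finite forward orbit, which cannot account for the required accumulation on $p$), so $I$ is a homterval; the Homterval Lemma then gives either a wandering interval, which is alternative (1), or $I\subset\BB_0(f)\cup\co_f^-(\per(f))$, and now the standing information that $p\in\omega_f(I)$ forces the attracting periodic-like orbit to be the orbit of $p$ itself, which is alternative (2). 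So the essential missing idea in your plan is to make ``$p\in\overline{\co_f^+(a)}$ for every $a$ in a one-sided neighborhood of $p$'' the hypothesis of the nontrivial case (its failure \emph{is} (3)), rather than trying to manufacture (3) from the breakdown of a homterval/window construction.
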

\begin{proof}
Let $r_0=\inf\{r\ge0$ $;$ $p\in\overline{\co_f^+(a)}$, $\forall\,a\in(p-r,p)\}$.
If $r_0=0$, then item (3) of the lemma is true.
Thus, suppose that $r_0>0$ and let $I=(p-r_0,p)$. As $p\notin\overline{\co_f^+(\cv_f)}$ and $p\in\omega_f(I)$, we have that $I\cap\co_f^-(\cc_f)=\emptyset$.
As a consequence, $I$ is a homterval.
Therefore, it follows from the homterval lemma that either item (1) is true or $I\subset\BB_0(f)\cup\co_f^-(\per(f))$. But, as $p\in\omega_f(I)$, $p$ must be an attracting periodic-like orbit containing $I$ in its basin of attraction.  
\end{proof}

\begin{Proposition}\label{PropWanderingFree}
Let $f:[0,1]\setminus\cc_f\to[0,1]$ be a $C^2$ non-flat local diffeomorphism, where $\cc_f\subset[0,1]$ is a finite set. If $I$ is a wandering interval then $\omega_f(I)\subset\overline{\co_f^+(\cv_f)}$.
\end{Proposition}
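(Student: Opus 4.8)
The plan is to argue by contradiction. Suppose $I$ is a wandering interval and there exists a point $p\in\omega_f(I)\setminus\overline{\co_f^+(\cv_f)}$. Since $\omega_f(I)$ is a closed set, we may assume $p$ is not isolated in $\omega_f(I)$ from (say) the left, and in fact, by shrinking, that every one-sided neighborhood of $p$ meets $\omega_f(I)$; the symmetric case is handled identically. The key point is that $p$ lies outside $\overline{\co_f^+(\cv_f)}$, so Lemma~\ref{LemmaInt876f} applies to $p$: one of its three alternatives must hold for an interval $(a,p)$ (and similarly for $(p,b)$). Alternative (2) is impossible, because if $p$ were a periodic-like attractor with $(a,p)\subset\beta(\co_f^+(p))$, then $p\in\BB_0(f)$, and since $p\in\omega_f(I)$ this would force $I$ to meet $\co_f^-(\BB_0(f))=\BB_0(f)\cup\co_f^-(\per(f))$... more precisely it would force $\omega_f(I)\cap\BB_0(f)\ne\emptyset$; but a point whose omega-limit set meets the basin of an attracting periodic-like orbit is itself eventually in that basin, contradicting $I\cap\BB_0(f)=\emptyset$ in the definition of wandering interval. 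So alternatives (1) or (3) hold.

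Next I would rule out alternative (3) at a point $p$ that genuinely belongs to $\omega_f(I)$. If, for every $\varepsilon>0$, there were $a\in(p-\varepsilon,p)$ with $p\notin\overline{\co_f^+(a)}$, I would use that $p\in\omega_f(I)$ means some iterate $f^{n_k}(I)$ accumulates on $p$ from the appropriate side; since $I$ is a homterval (for a wandering interval $f^j|_I$ is a homeomorphism for all $j$, as $I\cap\co_f^-(\cc_f)=\emptyset$ — which itself follows because $p\in\omega_f(I)$ and $p\notin\overline{\co_f^+(\cv_f)}$ already forces the forward images of $I$ to stay away from $\cc_f$), the images $f^{n}(I)$ are intervals, and for large $n$ one endpoint of $f^n(I)$ is within $\varepsilon$ of $p$ on the left. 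Then $f^n(I)$ contains points $a$ arbitrarily close to $p$ from the left, and the forward orbit of such an $a$ is (a tail of) the forward orbit of a point of $I$, hence accumulates wherever $\omega_f(I)$ does — in particular on $p$. This contradicts $p\notin\overline{\co_f^+(a)}$. Hence alternative (1) must hold: $(a,p)$ is a wandering interval for some $a<p$, and symmetrically $(p,b)$ is a wandering interval for some $p<b$, so $(a,b)\setminus\{p\}$ consists of wandering intervals.

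The heart of the argument is then to derive a contradiction from the fact that $p$ is surrounded by wandering intervals while $p\in\omega_f(I)$ and $p\notin\overline{\co_f^+(\cv_f)}$. The set $W$ of all points admitting a one-sided wandering-interval neighborhood disjoint-from-orbit-of-$p$ is forward invariant in a suitable sense, and the standard contracting/Koebe machinery applies: because the forward orbit of $(a,b)$ avoids $\cc_f$ (staying away from $\overline{\co_f^+(\cv_f)}\supset\co_f^-(\cc_f)\cap\,\cdots$, or rather: any landing in $\cc_f$ would put a critical value in $\co_f^+(\cv_f)$ near points of $\omega_f(I)$), each $f^n$ restricted to $(a,b)$ is a diffeomorphism onto its image, and the pieces $f^n((a,b))$ are pairwise disjoint (wandering), hence $\sum_n|f^n((a,b))|<\infty$ and $|f^n((a,b))|\to0$. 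Now apply the Koebe-type distortion estimate of Proposition~\ref{KoebeVvS}, using $p$ as an interior marked point: the space on the two sides of $f^n(I)$ inside $f^n((a,b))$ stays definite relative to $|f^n(I)|$ by bounded distortion pulled back from the bounded geometry at scale $(a,b)$. This gives uniformly bounded distortion of $f^n$ on $I$ for all $n$, which combined with $I$ wandering (so $|f^n(I)|\to0$ while the total length is finite) forces $I$ to be attracted to... here is where I expect the real obstacle. The clean contradiction is that bounded distortion plus $p\in\omega_f(I)$ from both sides would make $I$ itself behave like an interval of periodicity or put $\omega_f(I)$ inside a homterval structure contradicting wandering; making this last step rigorous — extracting from "$p$ is a two-sided accumulation point of $\omega_f(I)$, with bounded distortion" a genuine contradiction with wandering — is the delicate part, and is presumably where the authors invoke that $\omega_f(I)$ cannot then be a nowhere dense set, or run a pull-back argument showing $I\subset\co_f^-(\per(f))$. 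I would structure the final step as: bounded distortion lets one pull back a fixed-size interval around $p$ (taken so small it avoids $\overline{\co_f^+(\cv_f)}$, hence $\cc_f$) along the subsequence $n_k$ to get homtervals containing $I$ of definite relative size, then conclude via the Homterval Lemma (Lemma~\ref{LemmaHomterval}) that $I$ is not wandering — the desired contradiction.
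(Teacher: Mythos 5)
There are two genuine gaps. First, your elimination of alternative (3) of Lemma~\ref{LemmaInt876f} is logically incorrect: that alternative asserts the \emph{existence} of some $a\in(p-\varepsilon,p)$ with $p\notin\overline{\co_f^+(a)}$, and this is not contradicted by the fact that the iterates $f^n(I)$ accumulate on $p$ from the left. The images $f^n(I)$ are pairwise disjoint small intervals; points between them need not have orbits accumulating on $p$, so producing points near $p$ whose orbits do accumulate on $p$ says nothing about the particular $a$ furnished by (3). In fact alternative (3) is not an obstacle to be removed but the engine of the paper's proof: it is exactly what produces, for each $n$, a nice interval $(\alpha_n,\beta_n)\ni p$ with $|(\alpha_n,\beta_n)|\to0$, and the whole argument runs through first entry maps to these shrinking nice intervals. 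Your reduction to ``$p$ is surrounded by wandering intervals'' therefore does not go through; the case where $p$ lies on the boundary of a wandering interval is a genuinely separate (and secondary) case in the paper, handled by showing $f^{\ell_n}(J)$ must eventually enter the maximal wandering interval $W=(p,q)$ and then can never return to it.

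Second, even granting your reduction, the concluding step is missing, as you acknowledge. The sketch you offer --- pull back a fixed interval around $p$ with bounded distortion to get homtervals of definite relative size containing $I$, then invoke the Homterval Lemma --- cannot give a contradiction: the Homterval Lemma only says a homterval is wandering or contained in $\BB_0(f)\cup\co_f^-(\per(f))$, and a larger homterval containing a wandering interval may perfectly well be wandering itself (the paper even passes to the \emph{maximal} wandering interval $J\supset I$ for exactly this reason). The actual mechanism in the paper is different: with $F_n$ the first entry map of the component $T$ of $[0,1]\setminus\overline{\co_f^+(\cv_f)}$ into the nice interval $(\alpha_n,\beta_n)$, Koebe/real bounds (Proposition~\ref{KoebeVvS}) give uniform distortion on the entry domain $J_n\supset J$, maximality of $J$ plus the Homterval Lemma force $\leb(J_n\setminus J)\to0$, hence $|F_n(J)|/|(\alpha_n,\beta_n)|\to1$; then the \emph{first return map} of $(\alpha_n,\beta_n)$, which has full image and bounded distortion because the nice interval avoids $\overline{\co_f^+(\cv_f)}$, must map $F_n(J)$ across $F_n(J)$, yielding $f^{r_n+\ell_n}(J)\cap f^{\ell_n}(J)\ne\emptyset$ and contradicting that $J$ is wandering. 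Without this (or an equivalent) quantitative ``almost full in a nice interval, then return'' step, your argument does not reach a contradiction.
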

\begin{proof}
Let $J$ be the maximal open wandering interval containing $I$ and suppose that $p\in\omega_f(J)$ for some $p\in[0,1]\setminus\overline{\co_f^+(\cv_f)}$. 
Let $T$ be the connected component of $[0,1]\setminus\overline{\co_f^+(\cv_f)}$ containing $p$.
We may assume that $p\in\overline{\co_f^+(I)\cap(0,p)}$, the case $p\in\overline{\co_f^+(I)\cap(p,1)}$ is analogue.

We will consider two cases depending on $p$ belonging or not to the boundary of a wandering interval.
First consider the case in which $p$ does not belong to a boundary of a wandering interval.

In this case, as $p\in\overline{\co_f^+(I)\cap(0,p)}$, given any $\forall\,0\le a<x$, neither $(a,x)$ is a wandering interval  nor $p$ is an attracting periodic orbit with $\beta(\co_f^+(p))\supset(a,p)$.
 It follows from Lemma~\ref{LemmaInt876f} that there are sequences $a_n\nearrow p$ and $p\le b_n$ such that $\co_f^+(a_n)\cap(a_n,b_n)=\emptyset$.
 If $p\notin\omega_f(p)$ or if $p\in\per(f)$, set $(\alpha_n,\beta_n)=(a_n,p)$.

Suppose that $p$ is recurrent, $p\in\omega_f(p)$, but not a periodic point. As we are assuming that $p$ does not belong to the boundary of a wandering interval, it follows from Lemma~\ref{LemmaInt876f} that there exist sequences $a'_n<p<b'_n$ such that $b'_n\searrow p$ and $\co_f^+(b'_n)\cap(a'_n,b'_n)=\emptyset$.
 Thus, we set $(\alpha_n,\beta_n)=(a_n,b_n)\cap(a'_n,b'_n)$,  whenever $p\in\omega_f(p)$.

Notice that $(\alpha_n,\beta_n)$ is always a nice interval and $|(\alpha_n,\beta_n)|\searrow0$.

Let $n_0\ge1$ be such that $\delta:=\min\{|R|/|U|,|L|/|U|\}>0$, where $R,L$ are the connected components of $T\setminus(\alpha_{n_0},\beta_{n_0})$.
For each $n\ge n_0$ let $U_n=\{x\in T\,;\,\co_f^+(x)\cap(\alpha_n,\beta_n)\ne\emptyset\}$ and $\ell_n(x)=\min\{j\ge0$ $;$ $f^j(x)\in(\alpha_n,\beta_n)\}$, whenever $x\in U_n$. Let $F_n:U_n\to (\alpha_n,\beta_n)$ be the first entry map of $T$ in $(\alpha_n,\beta_n)$, i.e., $F_n(x)=f^{\ell_n(x)}(x)$. 

As $F_n$ is a first entrance map, if $V$ is a connected component of $U_n$, then $f^j(V)\cap f^k(V)=\emptyset$ $\forall0\le j<k\le \ell_n(V)$. In particular, $\sum_{j=0}^{\ell_n(V)}|f^j(V)|\le1$.

Let $T_n$ be the maximal open interval containing $J$ such that $f^{\ell_n(J)}|_{T_n}$ is a homeomorphism and that $f^{\ell_n(J)}(T_n)\subset T$.   As $T\cap\overline{\co_f^+(\cv_f)}=\emptyset$, we get that $f^{\ell_n(J)}(T_n)=T$. Let $J_n$ be the connected component of $U_n$ containing $J$. In particular, $\ell_n(J_n)=\ell_n(J)$. As $(\alpha_n,\beta_n)$ is a nice interval, $(F_n)(J_n)=(\alpha_n,\beta_n)$.
For each $n\ge n_0$, let $R_n$ and $L_n$ be the connected components of $f^{R_n(J_n)}(T_n)\setminus F_n(J_n)$ $=$ $T\setminus (\alpha_n,\beta_n)$.
So, $\min\{|R_n|/|F_n(J_n)|,|L_n|/|F_n(J_n)|\}\ge\delta>0$. As $\varepsilon:=\max\{|f^j(T_n)|\,;\,0\le j\le \ell_n(J_n)\}\le1$, it follows from Proposition~\ref{KoebeVvS} that there exists $\gamma>0$ such that 
\begin{equation}\label{EqBDoiuhg}
|DF_n(x)|\big/|DF_n(y)|\le \gamma,
\end{equation}
for every $x,y\in J_n$ and $n\ge n_0$.

Since $\lim_{n}|(\alpha_n,\beta_n)|=0$, it follows that $\lim_{n} \ell_n(J_n)=\lim_{n} \ell_n(J)=\infty$.
Therefore, as $J_{n}\supset J_{n+1}\supset J$ $\forall\,n\ge n_0$ and $f^k|_{J'}$ is a diffeomorphism for all $k\ge1$, where $J':=\interior(\bigcap_{n\ge n_0}J_n)$. As a consequence, it follows from Lemma~\ref{LemmaHomterval}, the homterval lemma, that $J'$ is a wandering interval. By the maximality of $J$, we have $J'=J$. In particular, $\lim_n\leb(J_n\setminus J)=0$. Thus, it follows from the bounded distortion (\ref{EqBDoiuhg}) that
\begin{equation}\label{Eqkjd992dbop}\lim_{n}\frac{|F_n(J)|}{|(\alpha_n,\beta_n)|}=\lim_n\frac{|F_n(J)|}{|F_n(J_n)|}=1.
\end{equation}

Now, let $\cu_n=\{x\in(\alpha_n,\beta_n)$ $;$ $\co_f^+(f(x))\cap(\alpha_n,\beta_n)\ne\emptyset\}$ and $\cf_n:\cu_n\to(\alpha_n,\beta_n)$ be the first return map to $(\alpha_n,\beta_n)$. Let $r_n(x):=\min\{j\ge1$ $;$ $f^j(x)\in(\alpha_n,\beta_n)\}$, $x\in\cu_n$, be the first return time to $(\alpha_n,\beta_n)$. That is, $\cf_n(x)=f^{r_n(x)}(x)$.
Let $\mathcal{J}_n$ be the connected component of $\cu_n$ containing $F_n(J)$ and let $\mathcal{T}_n$ be the maximal open interval containing $\mathcal{J}_n$ such that $f^{r_n(\mathcal{J}_n)}|_{\mathcal{T}_n}$ is monotone and $f^{r_n(\mathcal{J}_n)}(\mathcal{T}_n)\subset T$.
Again, as $(\alpha_n,\beta_n)$ is a nice interval and $(\alpha_n,\beta_n)\cap\RE(\co_f^+(\cv_f))=\emptyset$, for all $n\ge n_0$, we get that $\cf_n(\mathcal{T}_n)=T$ and $\cf_n(\mathcal{J}_n)=(\alpha_n,\beta_n)$, $\forall\,n\ge n_0$.

Because $\cf_n$ is a first return map, $f^j(\mathcal{J}_n)\cap f^k(\mathcal{J}_n)=\emptyset$ for all $0\le j<k\le r_n(\mathcal{J}_n)$ and so, $\sum_{j=0}^{r_n(\mathcal{J}_n)}|f^j(\mathcal{J}_n)|\le1$. As $|f^j(\mathcal{T}_n)|\le1$, $0\le j\le r_n(\mathcal{J}_n)$, it follows from Proposition~\ref{KoebeVvS} that
$$
|D\cf_n(x)|\big/|D\cf_n(y)|\le \gamma,
$$
for every $x,y\in\mathcal{J}_n$ and $n\ge n_0$.
This non-linearity distortion control combined with (\ref{Eqkjd992dbop}) implies that, if $n$ is big enough, $$f^{r_n(\mathcal{J}_n)+\ell_n(J)}(J)\cap f^{\ell_n(J)}(J)=\cf_n(F_n(J))\cap F_n(J)\ne\emptyset,$$
which is impossible as $J$ is a wandering interval.

Now, we consider the second case, that is, suppose that $p$ belongs to the boundary of a wandering interval. As we are assuming that  $p\in\overline{\co_f^+(I)\cap(0,p)}$, $p$ has to be the point at the left of the boundary of the wandering interval. Thus, let $W=(p,q)$ be the maximal wandering interval such that $p\in\partial W$. Notice that $p\in\omega_f(W)$.

\begin{claim}
For every $\varepsilon>0$, there are $a\in(p-\varepsilon,p)$ and $b\in(q,q+\varepsilon)$ such that $(a,b)$ is a nice interval.
\end{claim}
\begin{proof}[Proof of the Claim]
Let $(a,b)$ be the maximal open interval containing $J$ such that $\{p,q\}\cap\omega_f(x)\ne\emptyset$ for all $x\in(a,b)$.
Following the argument in the proof of Lemma~\ref{LemmaInt876f}, we can conclude that $(a,b)$ is a wandering interval.
By the maximality  of $W$, we get that $(a,b)=W$.
Therefore, for every $\varepsilon>0$ there exists $p-\varepsilon<\alpha<p$ and $q<\beta<q+\varepsilon$ such that $\{p,q\}\cap\overline{\co_f^+(\alpha)\cup\co_f^+(\beta)}=\emptyset$.
Thus, the connected component $(a,b)$  of $(p-\varepsilon,q+\varepsilon)\setminus\overline{\co_f^+(\alpha)\cup\co_f^+(\beta)}$ containing $(p,q)$ is a nice interval.
\end{proof}

For each $n\ge1$, let $(\alpha_n,\beta_n)$ be a nice interval contained in (p-1/n,q+1/n).
As before, let $n_0\ge1$ be such that $\delta:=\min\{|R|/|U|,|L|/|U|\}>0$, where $R,L$ are the connected components of $T\setminus(\alpha_{n_0},\beta_{n_0})$. Let $F_n:U_n\to(a_n,b_n)$ be the first entry map in $(\alpha_n,\beta_n)$, $U_n=\{x\in T$ $;$ $\co_f^+(x)\cap(\alpha_n,\beta_n)\ne\emptyset\}$, $\ell_n(x)$ be the first entry time, i.e., $F_n(x)=f^{\ell_n}(x)(x)$, and let $J_n$ be the connected component of $U_n$ containing $J$.
As before, see (\ref{EqBDoiuhg}), we get that
$$
|DF_n(x)|\big/|DF_n(y)|\le \gamma,\,\,\forall\,x,y\in J_n,\,\,\forall\,n\ge n_0.
$$
Because $\bigcap_{n}(\alpha_n,\beta_n)=(p,q)$, $\lim_n\ell_n(J_n)=\infty$ and, as before, we can conclude that $\lim_n\leb(J_n\setminus J)=0$. Again, from the non-linearity distortion control as above, it follows that $\lim_n\frac{|F_n(J)|}{|(\alpha_n,\beta_n)|}=1$. As a consequence,
\begin{equation}\label{EG0djndi}
f^{\ell_n}(J_n)(J)\cap W\ne\emptyset
\end{equation}
for every $n$ big enough and this leads to a contradiction. Indeed, if $f^{\ell_{n_1}}(J_n)(J)\cap(p,q)\ne\emptyset$, then it follows from the maximality of $W$ that $f^{\ell_{n_1}}(J_n)(J)\subset W$.
So, as $W$ is a wandering interval, we get that $f^{n}(J)\cap W\subset f^{n}(W)\cap W=\emptyset$ $\forall\,n>\ell_{n_1}$, which contradicts (\ref{EG0djndi}).
\end{proof}

\begin{Corollary}\label{LemNice8}
	If $p\in[0,1]\setminus\overline{\co_f^+(\cv_f)}$ is not an attracting periodic-like point, then for each $\delta>0$ there is a nice interval $J=(\alpha,\beta)$ such that $p-\delta<\alpha<p<\beta<p+\delta$.
\end{Corollary}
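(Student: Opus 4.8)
The plan is to read the result off Lemma~\ref{LemmaInt876f} together with Proposition~\ref{PropWanderingFree}. First I would note that the two hypotheses on $p$ kill two of the three alternatives in Lemma~\ref{LemmaInt876f}, on both sides of $p$: since $p$ is not an attracting periodic-like point, alternative~(2) is excluded; and if alternative~(1) held on, say, the left side — so that $(a,p)$ is a wandering interval having $p$ in its $\omega$-limit set — then Proposition~\ref{PropWanderingFree} would force $p\in\omega_f\big((a,p)\big)\subset\overline{\co_f^+(\cv_f)}$, contradicting $p\notin\overline{\co_f^+(\cv_f)}$. Hence alternative~(3) holds on both sides: for every $\varepsilon>0$ there exist $a\in(p-\varepsilon,p)$ and $b\in(p,p+\varepsilon)$ with $p\notin\overline{\co_f^+(a)}\cup\overline{\co_f^+(b)}$.

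Now fix $\delta>0$. Since $\overline{\co_f^+(\cv_f)}$ is closed and misses $p$, after shrinking $\delta$ I may assume $(p-\delta,p+\delta)\cap\overline{\co_f^+(\cv_f)}=\emptyset$. Using the previous step with $\varepsilon=\delta$, pick $a\in(p-\delta,p)$ and $b\in(p,p+\delta)$ with $p\notin S:=\overline{\co_f^+(a)}\cup\overline{\co_f^+(b)}$, and let $J=(\alpha,\beta)$ be the connected component of $(p-\delta,p+\delta)\setminus S$ containing $p$. Since $a,b\in S$ lie strictly inside $(p-\delta,p+\delta)$, we get $p-\delta<a\le\alpha<p<\beta\le b<p+\delta$, and both endpoints belong to the closed set $S$.

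The heart of the proof is checking that $J$ is nice, i.e.\ that $\co_f^+(f(\alpha_\pm))$ and $\co_f^+(f(\beta_\pm))$ avoid $J$. I would argue endpoint by endpoint. If an endpoint $\eta\in\{\alpha,\beta\}$ lies in $\cc_f$, then $f(\eta_-),f(\eta_+)\in\cv_f$, so $\co_f^+(f(\eta_\pm))\subset\overline{\co_f^+(\cv_f)}$, which is disjoint from $(p-\delta,p+\delta)\supset J$ by the choice of $\delta$. If $\eta\notin\cc_f$, then $f(\eta_-)=f(\eta_+)=f(\eta)$ and $\eta\in\overline{\co_f^+(a)}$ or $\eta\in\overline{\co_f^+(b)}$; since $f$ is continuous off $\cc_f$, one propagates this forward — at each step either applying $f$ to a convergent sequence of orbit points, or stopping if the exceptional set is reached — to conclude that every point of $\co_f^+(f(\eta))$ again lies in $\overline{\co_f^+(a)}\cup\overline{\co_f^+(b)}=S$. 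As $J$ is a connected component of $(p-\delta,p+\delta)\setminus S$, it is disjoint from $S$, so in either case $\co_f^+(f(\eta_\pm))\cap J=\emptyset$. Therefore $J$ is a nice interval with $p-\delta<\alpha<p<\beta<p+\delta$, as required.

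The step I expect to be the most delicate is this last propagation: an endpoint $\eta$ need not belong to the orbit $\co_f^+(a)$ but only to its closure, and $\eta$ or one of its forward images may hit $\cc_f$. Handling this cleanly requires using that $f$ has one-sided limits everywhere, is continuous off $\cc_f$, and obeys the convention that the forward orbit of a point of $\co_f^-(\cc_f)$ terminates at $\cc_f$; the exceptional-set case is then absorbed by the inclusion $\co_f^+(\cv_f)\subset\overline{\co_f^+(\cv_f)}$ and the choice of $\delta$. Everything else is routine bookkeeping.
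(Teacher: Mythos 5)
Your exclusion of alternative~(1) of Lemma~\ref{LemmaInt876f} has a genuine gap, and it is exactly the delicate case. Alternative~(1) only asserts that $(a,p)$ (or $(p,b)$) is a wandering interval; it does \emph{not} assert that $p$ lies in the $\omega$-limit set of that interval, and Proposition~\ref{PropWanderingFree} does not forbid a point $p\notin\overline{\co_f^+(\cv_f)}$, non attracting periodic-like, from being an endpoint of a wandering interval. So the sentence ``so that $(a,p)$ is a wandering interval having $p$ in its $\omega$-limit set'' imports a hypothesis that is not available, and your conclusion ``alternative~(3) holds on both sides'' is not justified as written. The paper's own proof treats precisely this configuration: after producing $(a_1,b_1)$ from the side on which orbits accumulate at $p$, it devotes the last paragraph to the sub-case where $(p,p+r)$ is a wandering interval, and there it uses Proposition~\ref{PropWanderingFree} in the opposite direction (the orbit of the wandering interval does \emph{not} accumulate at $p$) to place the right endpoint $b_2$ \emph{inside} the wandering interval and take $a_2=\sup\big(\co_f^+((p,b_2))\cap[0,p)\big)<p$.

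The gap is repairable within your scheme: if $(p,b_0)$ is contained in a wandering interval $W$, then by Proposition~\ref{PropWanderingFree} every $x\in W$ has $\omega_f(x)\subset\overline{\co_f^+(\cv_f)}\not\ni p$, so $p\in\overline{\co_f^+(x)}$ forces $f^n(x)=p$ for some $n$; since $f$ is strictly monotone on each of the finitely many components of $[0,1]\setminus\cc_f$, the set $\co_f^-(p)$ is countable, hence there are points $b\in(p,p+\varepsilon)$ with $p\notin\overline{\co_f^+(b)}$ after all (same on the left). With that supplement, your construction of $J$ as the component of $(p-\delta,p+\delta)\setminus S$ containing $p$, and your endpoint-by-endpoint propagation showing $\co_f^+(f(\eta_\pm))\subset S\cup\overline{\co_f^+(\cv_f)}$, is sound and is essentially the same device the paper uses (and leaves implicit) when it declares the components of $[0,1]\setminus\overline{\co_f^+(a_0)}$ to be nice. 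But as submitted, the wandering-interval case --- the reason the corollary needs a proof at all beyond Lemma~\ref{LemmaInt876f} --- is not handled.
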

\begin{proof}
If $p\notin L_+(f)=\overline{\bigcup_{x}\omega_f(x)}$, then let $a_0\in(p-\delta,p)\setminus\co_f^-(p)$ and $b_0\in(p,p+\delta)\setminus\co_f^-(p)$.
Let $J_0$ be the connected component of $[0,1]\setminus\overline{\co_f^+(a_0)}$ containing $p$ and let $J_1$ be the connected component of $[0,1]\setminus\overline{\co_f^+(b_0)}$ containing $p$.
Setting $J=J_0\cap J_1$, we finish the proof for $p\notin L_+(f)$.

Now, suppose that $p\in L_+(f)$.
We may assume that $p\in\overline{\co_f^+(q)\cap[0,p)}$, the case $p\in\overline{\co_f^+(q)\cap(p,1]}$ being  analogous. 
As $p\in\overline{\co_f^+(q)\cap[0,p)}$, $(a,p)$ cannot be a wandering interval for $0<a<p$. 
As $p$ is not an attracting periodic point, it follows from Lemma~\ref{LemmaInt876f} that, given any $\delta>0$, $\exists\,a_0\in(p-\delta,p)$ such that  $p\notin\overline{\co_f^+(a_0)}$.
Let $(a_1,b_1)$ be the connected component of $[0,1]\setminus\overline{\co_f^+(a_0)}$ containing $p$.
As $(a_1,b_1)$ is a nice interval and as $p-\delta<a_0\le a_1<p<b_1$, then if $b_1<p+\delta$, the proof is finished.
So, let us assume that $b_1\ge p+\delta$.

If $(p,p+r)$ is a wandering interval for some $0<r\le\delta$, then it follows from Proposition~\ref{PropWanderingFree} that $p\notin\omega_f((p,b_1))$, where $b_2=\min\{p+r,p+\delta/2\}$. So, letting $a_2=\sup(\co_f^+((p,b_1))\cap[0,p))$, we have that $(a_2,b_2)$ is a nice interval. Thus, taking $J=(a_2,b_2)\cap(a_1,b_1)$, the proof is finished.
\end{proof}

\section{Induced Markov maps}

\begin{Lemma}\label{LemmaRetnicgdt5}
Let $f:[0,1]\setminus\cc_f\to[0,1]$ be a $C^2$ non-flat local diffeomorphism, where $\cc_f\subset[0,1]$ is a finite set. If $q\in L_+(f)\setminus\overline{\co_f^+(\cv_f)}$ is not an attracting periodic-like point, then given any $\varepsilon>0$ there is a nice interval $J=(\alpha,\beta)$ with $\alpha<q<\beta$ and such that
\begin{enumerate}
	\item  $|R|/|J|$ and $|L|/|J|\ge1$, where $R$ and $L$ are the connected components of $T\setminus\ J$ and $T$ is the connected component of $[0,1]\setminus\overline{\co_f^+(\cv_f)}$ containing $q$.
	\item $\big|\frac{F'(x)}{F'(y)}\big|\le1+\varepsilon^2$ for every $x,y\in I$ and every connected component $I$ of $J^*$, where $F:J^*\to J$ is the first return map to $J$ and $J^*=\{x\in J\,;\,\co_f^+(f(x))\cap J\ne\emptyset\}$.
\end{enumerate}
\end{Lemma}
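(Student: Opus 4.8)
The plan is to take for $J$ a sufficiently small nice interval around $q$, supplied by Corollary~\ref{LemNice8}, and to estimate the distortion of its first return map via the Koebe inequality of Proposition~\ref{KoebeVvS}, exploiting that $|J|$ may be made arbitrarily small relative to the \emph{fixed} interval $T$.

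First I would settle item~(1) together with the choice of $J$. Since $q\in L_+(f)$ is not an attracting periodic-like point and $q\notin\overline{\co_f^+(\cv_f)}$, Corollary~\ref{LemNice8} gives, for every $\eta>0$, a nice interval $J=J_\eta=(\alpha,\beta)$ with $q-\eta<\alpha<q<\beta<q+\eta$; in particular $J\subset T$, so $J\cap\overline{\co_f^+(\cv_f)}=\emptyset$. As $q$ is an interior point of the fixed open interval $T$, once $\eta<\tfrac12\dist(q,\partial T)$ the two components $R,L$ of $T\setminus J$ have length at least $\tfrac12\dist(q,\partial T)$ while $|J|<2\eta$; hence $|R|/|J|\to\infty$ and $|L|/|J|\to\infty$ as $\eta\searrow0$, so item~(1) (indeed much more) holds for all small $\eta$. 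It remains to verify that item~(2) also holds provided $\eta$ is small enough.

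For item~(2), fix such an $\eta$, let $F:J^*\to J$ be the first return map to $J=(\alpha,\beta)$, and let $I$ be a connected component of $J^*$, with first return time $r=r(I)$, so that $F|_I=f^r|_I$. As in the proof of Proposition~\ref{PropWanderingFree} (using that $J$ is nice and disjoint from $\co_f^+(\cv_f)$), one has $f^r(I)=J$, and the maximal interval $\widehat I\supset I$ on which $f^r$ is a homeomorphism with image in $T$ satisfies $\widehat I\subset T$ and $f^r(\widehat I)=T$, since any obstruction to extending the branch would send an endpoint of $f^r(\widehat I)$ into $\overline{\co_f^+(\cv_f)}\cap\interior(T)=\emptyset$. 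Because $F$ is a first return map, $I,f(I),\dots,f^{r-1}(I)$ are pairwise disjoint, so $\sum_{i=0}^{r-1}|f^i(I)|\le1$ and $\varepsilon_0:=\max_{0\le j\le r}|f^j(\widehat I)|\le1$; applying item~\ref{EqVvS1} of Proposition~\ref{KoebeVvS} to $T_0=\widehat I$, $J_0=I$, $J\subset T$, $\delta=\min\{|R|/|J|,|L|/|J|\}\ge1$ already yields a \emph{uniform} bound $|Df^r(x)/Df^r(y)|\le C_0$ on $I$, with $C_0=C_0(f,q)$ independent of $\eta$ and $I$. Feeding this crude estimate back (also for the tails $f^{r-i}|_{f^i(\widehat I)}$) gives $|f^i(I)|\le C_0\,(|J|/|T|)\,|f^i(\widehat I)|$ for each $i$, and then the standard bounded-multiplicity property of the orbits of branch domains of first return maps to the nice intervals $J$ and $T$ yields $\sum_{i=0}^{r-1}|f^i(\widehat I)|\le C_1(f,q)$; hence $\sum_{i=0}^{r-1}|f^i(I)|\le (C_0C_1/|T|)\,|J|$, which tends to $0$ uniformly in $I$ as $\eta\searrow0$.

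Finally I would invoke item~\ref{EqVvS3} of Proposition~\ref{KoebeVvS} with the same data: for $x,y\in I$, since $|f^r(x)-f^r(y)|\le|J|$ and $|f^i(x)-f^i(y)|\le|f^i(I)|$,
\[
\left|\frac{F'(x)}{F'(y)}\right|=\left|\frac{Df^r(x)}{Df^r(y)}\right|\le\exp\!\Big(\tfrac{2}{\delta|J|}|f^r(x)-f^r(y)|+O(\varepsilon_0)\textstyle\sum_{i=0}^{r-1}|f^i(x)-f^i(y)|\Big)\le\exp\!\Big(\tfrac{2}{\delta}+C\textstyle\sum_{i=0}^{r-1}|f^i(I)|\Big),
\]
where $C$ bounds $O(\varepsilon_0)$ over $\varepsilon_0\in[0,1]$. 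As $\eta\searrow0$ we have $\delta\to\infty$ and $\sum_{i<r}|f^i(I)|\to0$ uniformly in $I$, so the right-hand side tends to $1$ uniformly, and choosing $\eta$ small enough that it stays below $1+\varepsilon^2$ establishes item~(2). The step I expect to be the main obstacle is precisely the upgrade from \emph{bounded} to \emph{nearly linear} distortion: it relies on the total-length bound $\sum_{i<r}|f^i(I)|=O(|J|)$, whose proof rests on the bounded-distortion and bounded-multiplicity machinery for branch domains of first return maps to nice intervals, together with the careful control of the maximal extensions $\widehat I$ (including handling possible obstructions at $\partial[0,1]$).
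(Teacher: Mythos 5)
Your reduction of item (1) to Corollary~\ref{LemNice8} is exactly right, and your final Koebe computation would indeed close item (2) \emph{if} you had the two estimates you assert in the middle; but both of those are genuine gaps. First, the inequality $|f^i(I)|\le C_0\,(|J|/|T|)\,|f^i(\widehat I)|$ does not follow from Proposition~\ref{KoebeVvS}: its items only compare $Df^{r-i}$ at two points of the \emph{inner} interval $f^i(I)$, whereas comparing $|f^i(I)|/|f^i(\widehat I)|$ with $|J|/|T|$ requires derivative control on all of $f^i(\widehat I)$, up to the boundary of the maximal extension, where no Koebe space is available. What you need is a macroscopic Koebe principle ("$|Dg|$ on the middle dominates the average slope $|T|/|f^i(\widehat I)|$"), and in the $C^2$ non-flat setting that estimate carries a nonlinearity factor governed by the lengths of the \emph{extension} chain $\{f^j(f^i(\widehat I))\}$ --- precisely the quantity you are trying to bound, so the argument as written is circular. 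Second, the bound $\sum_{i<r}|f^i(\widehat I)|\le C_1$ is not a "standard bounded-multiplicity" fact here: $r$ is the first return time to $J$, not the first entry time to $T$, so the orbit of $I$ (hence of $\widehat I$) may enter and leave $T$ many times before time $r$; the intervals $f^i(\widehat I)$ are not pairwise disjoint, and bounded intersection multiplicity for such maximal-extension chains is a nontrivial claim that would itself need proof. So the key step $\sum_{i<r}|f^i(I)|=O(|J|)$, which you correctly identify as the crux, is not established.

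It is worth seeing how the paper sidesteps this difficulty entirely: it never improves on the trivial bound $\sum_{i<r}|f^i(I)|\le 1$. Instead it makes the nonlinearity factor $e^{O(\varepsilon)\sum_i|f^i(I)|}$ close to $1$ by making $O(\varepsilon)$ small: one first proves (via the homterval lemma and Proposition~\ref{PropWanderingFree}, using that $q$ is neither in $\overline{\co_f^+(\cv_f)}$ nor an attracting periodic-like point) that the maximal diameter of the connected components of the domain of the \emph{first entry map} to a nice interval $J_{\delta}$ tends to $0$ as $\delta\to0$. Then one takes two nested nice intervals $J_{\delta_2}\subset J_{\delta_1}$ and extends each return branch of $J_{\delta_2}$ only onto $J_{\delta_1}$ (not onto the big component $T$); since $J_{\delta_1}$ is nice, every interval of that extension chain is trapped inside one entry-branch component to $J_{\delta_1}$, hence has small length, so $\varepsilon$ in Proposition~\ref{KoebeVvS} is small. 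The space factor $\big(\tfrac{1+\gamma}{\gamma}\big)^2$ is made close to $1$ using the gap between $J_{\delta_2}$ and $J_{\delta_1}$, giving a product below $1+\varepsilon^2$. If you want to salvage your route, you would have to prove both a $C^2$ macroscopic Koebe estimate with controlled error and a bounded-multiplicity lemma for the extension chains; adopting the paper's two-scale trick avoids both.
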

\begin{proof}
For each $0<\delta<\dist(q,\partial T)/4$, let $J_{\delta}=(\alpha_\delta,\beta_\delta)\subset(q-\delta,q+\delta)$, with $\alpha_{\delta}<q<\beta_{\delta}$, being a nice interval given by Corollary~\ref{LemNice8}.
So, if $T$ is a connected component of $[0,1]\setminus\overline{\co_f^+(\cv_f)}$ containing $q$, we get $|R|/|J_{\delta}|\ge 1$ and $|L|/|J_{\delta}|\ge1$, where $R$ and $L$ are the connected components of $T\setminus\ J_{\delta}$.

Let $\ell_{\delta}:\cu_{\delta}\to\NN$ be the first entry time to $J_{\delta}$ and  $G(x)=f^{\ell_{\delta}(x)}(x)$ the first entry map to $J_{\delta}$, where $\cu_{\delta}=\{x\,;\,\co_f^+(x)\cap J_{\delta}\ne\emptyset\}$. Notice  that $\cu_{\delta}\ne\emptyset$, because $p\in L_+(f)$.

Let $\cp_G(\delta)$ be the collection of all connected components of $\cu_{\delta}$, and $|\cp_G(\delta)|=\max\{|I|\,;\,I\in\cp_G(\delta)\}$.  As $J_{\delta}$ is a nice interval and $\co_f^+(\cv)\cap J_{\delta}=\emptyset$, we have that $G_{\delta}|_I$ is a homeomorphism between $I$ and $J_{\delta}$, for every $I\in\cp_G(\delta)$.
\begin{claim}
$\lim_{\delta\to0}|\cp_G(\delta)|=0$.
\end{claim}
\begin{proof}[Proof of the Claim]
Note that if $0<\delta_0<\delta_1$, then each $I_0\in\cp_G(\delta_0)$ is contained in some $I_1\in\cp_G(\delta_1)$.  Thus,
if $|\cp_G(\delta)|\not\to0$ then there exists a sequence $\delta_n\searrow 0$ and $I_n\in\cp_G(\delta_n)$ such that $I_1\supset I_2\supset I_3\supset\cdots$ and $I:=\bigcap_n I_n$ is a nontrivial interval, i.e., $|I|>0$.
As $|J_{\delta_n}|\searrow0$, it follows that $\ell_{\delta_n}(I_n)\to\infty$. Furthermore, as $\ell_{\delta_n}(I_n)\to\infty$ and as $f^{\ell_{\delta_n}(I_n)}|_I$ is a diffeomorphism for all $n\ge1$, we get that $f^n|_I$ is a homeomorphism for every $n\in\NN$. That is, $I$ is a homterval.

As $q\in\omega_f(I)$, since $f^{\ell_{\delta_n}(I_n)}(I)\subset J_{\delta_n}\to q$ and as $q$ is not an attracting periodic point, it follows that $I\not\subset\BB_0(f)\cup\co_f^-(\per(f))$. Thus, from Lemma~\ref{LemmaHomterval}, we conclude that $I$ is a wandering interval, contradicting Proposition~\ref{PropWanderingFree}.
\end{proof}

Let $F_{\delta}:J_{\delta}^*\to J_{\delta}$ be the first return map to $J_{\delta}$ and $r_{\delta}:J_{\delta}^*\to\NN$ be the first return time, where $J_{\delta}^*=\{x\in J_{\delta}\,;\,\co_f^+(f(x))\cap J_{\delta}\ne\emptyset\}$. Let $\cp_F(\delta)$ be the collection of all connected components of $J_{\delta}^*$. Again, because $q\in L_+(f)$, $\cp_F(\delta)\ne\emptyset$.

Choose $0<\delta_1<\dist(q,\partial T)/4$ small enough so that $e^{O(|\cp_G(\delta_1)|)}<1+\varepsilon^4$, where $O$ is the function that appears in Proposition~\ref{KoebeVvS}. Let $\delta_2\in(0,\delta_1)$ be such that $\gamma:=\min\{|A|/|J_{\delta}|,|B|/|J_{\delta}|\}$ satisfies $\big(\frac{1+\gamma}{\gamma}\big)^2<1+\varepsilon^4$, where $A$ and $B$ are the connected components of $J_{\delta_1}\setminus J_{\delta_2}$.

Consider some $I\in\cp_F(\delta_2)$. Let $V\supset I$ be the maximal interval such that $f^{r_{\delta_2}(I)}(V)\subset J_{\delta_1}$ and that  $f^{r_{\delta_2}(I)}|_V$ is a homeomorphism of $V$ with $f^{r_{\delta_2}(I)}(V)$.
As $\co_f^+(\cv_f)\cap J_{\delta_1}=\emptyset$, it follows that $f^{r_{\delta_2}(I)}(V)=J_{\delta_1}$.
As $J_{\delta_1}$ is a nice interval and $G_{\delta_1}$ is the first entry map to $J_{\delta_1}$, we get that for each $i\in\{0,\cdots,r_{\delta_2}(I)\}$ there is a $W_i\in\cp_G(\delta_1)$ such that $f^i(V)\subset W_i$.
Thus, $\max\{|V|,\cdots,|f^{r_{\delta_2}(I)}(V)|\}\le|\cp_G(\delta_2)|$.
Moreover, as $F_{\delta_2}$ is a first return map, $f^i(I)\cap f^k(I)=\emptyset$, $\forall\,0\le i<k<r_{\delta_2}(I)$, and so, $\sum_{j=0}^{r_{\delta_2}(I)-1}|f^j(I)|\le 1$.
As a consequence, applying Proposition~\ref{KoebeVvS}, we get that $\big|\frac{DF_{\delta_2}(x)}{DF_{\delta_2}(y)}\big|\le(1+\varepsilon^4)(1+\varepsilon^4)<1+\varepsilon^2.$
\end{proof}

\color{black}

\begin{Lemma}\label{Lemmajhvtr68ijn}
Let $\varepsilon>0$ and let $g:[a,b]\to\RR$ be a orientation preserving diffeomorphism such that $g$ has a unique fixed point which it is either $a$ or $b$. Let $|g'(x)-1|<\varepsilon$ $\forall\,x$.
Let $G:\bigcup_{n\ge1}A_n\to J$ be the first entry map with respect to $g$ of $[a,b]$ in the interval $J:=(g(a),g(b))\setminus(a,b)$, where $A_n=g^{-n}(J)$.
If  $\exists\,K>0$ such that $G'(x)/G'(y)<K$ $\forall\,x,y\in A_n$, $\forall\,n\ge1$, then $G'(x)\ge \frac{1}{\varepsilon K}\frac{|J|}{|b-a|}$.
\end{Lemma}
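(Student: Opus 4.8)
The plan is to reduce the statement to a single length estimate for the intervals $A_n$, exploiting that $g$ is $C^1$-close to the identity with a fixed endpoint. First I would normalize the picture: reflecting $[a,b]$ if necessary, I may assume the unique fixed point is $a$. Since $g$ is orientation preserving with $a$ its only fixed point, either $g(x)>x$ or $g(x)<x$ throughout $(a,b]$; in the latter case $g([a,b])=[a,g(b)]\subset[a,b]$ forces $J=\big(g(a),g(b)\big)\setminus(a,b)=\emptyset$ and $G$ would not be defined, so in fact $g(x)>x$ on $(a,b]$, $g(b)>b$, and $J=[b,g(b))$. Then $A_n=g^{-n}(J)=[g^{-n}(b),g^{-(n-1)}(b))$: these intervals are pairwise disjoint, their union is $(a,b)$ (because $g^{-n}(b)\searrow a$), and $g^n|_{A_n}\colon A_n\to J$ is an increasing diffeomorphism. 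Moreover $G$ really is the first entry map on $A_n$, since for $0\le j<n$ one has $g^j(A_n)=A_{n-j}\subset(a,b)$, which is disjoint from $J$.

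The heart of the matter is the bound $|A_n|\le\varepsilon|b-a|$ for every $n\ge1$. To get it, take any $y\in(a,b)$ and write, using $g(a)=a$ and the mean value theorem, $g(y)-y=\big(g(y)-g(a)\big)-(y-a)=(g'(\zeta)-1)(y-a)$ for some $\zeta\in(a,y)$; since $|g'-1|<\varepsilon$ this gives $0<g(y)-y<\varepsilon(y-a)\le\varepsilon|b-a|$. Now apply this to $y=g^{-n}(b)\in(a,b)$ and use $g(g^{-n}(b))=g^{-(n-1)}(b)$ to conclude $|A_n|=g^{-(n-1)}(b)-g^{-n}(b)=g(y)-y<\varepsilon|b-a|$.

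Finally I would combine this with the bounded distortion hypothesis by an integration argument. Since $g^n|_{A_n}$ maps $A_n$ diffeomorphically onto $J$, $\int_{A_n}G'(t)\,dt=|J|$. Fixing $x\in A_n$, the hypothesis $G'(y)/G'(x)<K$ for all $y\in A_n$ gives $|J|=\int_{A_n}G'(y)\,dy\le K\,G'(x)\,|A_n|<\varepsilon K\,G'(x)\,|b-a|$, that is, $G'(x)>\frac{1}{\varepsilon K}\frac{|J|}{|b-a|}$, which is the desired inequality (for every $x$ in the domain of $G$, since $n$ was arbitrary). The only step with any content is the length estimate $|A_n|\le\varepsilon|b-a|$; it is exactly where the two standing hypotheses — the fixed point sitting at an endpoint and $g'$ being within $\varepsilon$ of $1$ — are used, and the rest is routine bookkeeping about first entry maps together with a one-line integration against the distortion constant.
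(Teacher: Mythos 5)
Your proof is correct and follows essentially the same route as the paper: after placing the fixed point at $a$, the key step in both arguments is the mean value theorem estimate $|A_n|<\varepsilon|b-a|$ (the paper phrases it as $\frac{a_n-a}{a_{n+1}-a}<1+\varepsilon$ for $a_n=g^{-n}(g(b))$, you as $g(y)-y=(g'(\zeta)-1)(y-a)$), followed by the bounded distortion hypothesis applied to $g^n(A_n)=J$. Your integration against $G'$ in place of the paper's mean value point, and your explicit check that $G|_{A_n}=g^n$ and that $J=\emptyset$ in the contracting case, are only cosmetic differences.
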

\begin{proof}We may suppose that $g(a)=a$, the other case being analogous. In such a case, $J=(b,c)$ with $c=g(b)>b$.
Writing $a_n:=g^{-n}(c)$, we get $A_n=(a_{n+1},a_n)$ (see Figure~\ref{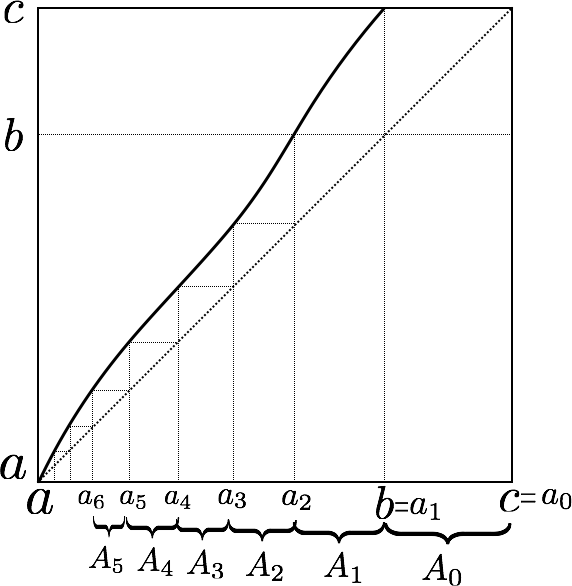}). It follows from the mean value theorem that $1+\frac{|A_n|}{a_n-a}=\frac{a_n-a}{a_{n+1}-a}<1+\varepsilon$.
\begin{figure}
  \begin{center}\includegraphics[scale=.25]{ladeira.png}\\
 \caption{}\label{ladeira.png}
  \end{center}
\end{figure}
Thus, $|A_n|/|b-a|\le\frac{|A_n|}{a_n-a}<\varepsilon$, $\forall\,n\ge1$. As $g^n(A_n)=(b,c)$, we get $G'(x)=(g^n)'(x)\ge \frac{1}{K}\frac{|c-b|}{|A_n|}$ $>$ $\frac{1}{\varepsilon K}\frac{|c-b|}{|b-a|}$ $\forall\,x\in A_n$, $\forall\,n\ge1$.
\end{proof}

In Proposition~\ref{Propositionjhvtr68ijn} below let $f:[0,1]\setminus\cc_f\to[0,1]$ be a $C^2$ non-flat local diffeomorphism, where $\cc_f\subset[0,1]$ is a finite set.
Let  $T$ be a connected component of $[0,1]\setminus\overline{\co_f^+(\cv_f)}$ and $J\subset T$ be a nice interval such that $|R|/|J|,|L|/|J|\ge1$, where $R$ and $L$ are the connected components of $T\setminus\ J$.
 Let $F:J^*\to J$ be the first return map to $J$, where $J^*=\{x\in J\,;\,\co_f^+(f(x))\cap J\ne\emptyset\}$  and $\cu(J)=\big\{x\in[0,1]\setminus\big(\BB_0(f)\cup\co_f^-(\per(f))\big)\,;\,\omega_f(x)\cap J\ne\emptyset\big\}$.

\begin{Proposition}\label{Propositionjhvtr68ijn}
Let $K=5\exp({O(1)})>1$, where $O(\varepsilon)$ is given by Proposition~\ref{KoebeVvS}. 
Suppose that there exists $0<\varepsilon<(6K)^{-1}$ such that $\big|\frac{F'(x)}{F'(y)}\big|\le1+\varepsilon^2$ for every $x,y\in I$ and every connected component $I$ of $J^*$.
\begin{enumerate}
\item If $x\in\cu(J)$ does not belong to the pre-image of a non-hyperbolic periodic point then $\lim_{n\to\infty}|DF^n(x)|=\infty$.
\item $\omega_f(x)$ is a cycle of intervals containing $x$, for Lebesgue almost every $x\in\cu(J)$.
\end{enumerate}
\end{Proposition}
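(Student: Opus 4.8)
The strategy is to promote the first return map $F:J^*\to J$ to a full-branched (Markov) induced map with uniformly bounded distortion, and then apply Lemma~\ref{pagina1}. The first observation is that, by the nice interval property, every branch of $F$ is a diffeomorphism onto all of $J$, and by hypothesis each branch has distortion at most $1+\varepsilon^2$. So the only thing missing to invoke Lemma~\ref{pagina1} with $G=F$, $V=J^*$ is the shrinking condition $|\cp_n(x)|\to 0$ on a positive measure set. I would argue this by contradiction exactly as in the proof of the Claim inside Lemma~\ref{LemmaRetnicgdt5}: if the components of $\bigcap_{j=0}^n F^{-j}(J^*)$ did not shrink to points on a full-measure subset of $\bigcap_j F^{-j}(J^*)$, one would extract a nested sequence with nontrivial intersection $I$; since the return times go to infinity (because $|J|$ is fixed but $\sum|f^j(I)|\le 1$ forces the number of iterates to diverge along the orbit revisiting $J$), $I$ would be a homterval, and since $q$-type accumulation on $J$ rules out $I\subset\BB_0(f)\cup\co_f^-(\per(f))$, the Homterval Lemma would make $I$ a wandering interval, contradicting Proposition~\ref{PropWanderingFree}. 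This handles the points whose entire $F$-orbit stays in $J^*$.

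For the general point $x\in\cu(J)$ I first reduce to the case $x\in J^*$ with $F^n(x)$ defined for all $n$. Since $\omega_f(x)\cap J\ne\emptyset$ and $x\notin\BB_0(f)\cup\co_f^-(\per(f))$, some forward iterate $f^k(x)$ lands in $J$; replacing $x$ by that iterate does not change $\omega_f(x)$ nor the finiteness of the orbit-escape issue. If from some point on the $F$-orbit is eventually always defined, we are in the previous situation. The remaining possibility is that the $F$-orbit of $f^k(x)$ leaves $J^*$, i.e.\ reaches a point $z\in J\setminus J^*$ whose forward $f$-orbit never returns to $J$; but then $\omega_f(x)=\omega_f(z)$ is disjoint from the interior of $J$ except possibly at $z$ itself, and in fact one shows $z$ must be attracted to a periodic-like orbit or a boundary point — here I would use Lemma~\ref{Lemmajhvtr68ijn} together with Lemma~\ref{LemmaInt876f} and Corollary~\ref{LemNice8} to see that the only way to permanently escape $J$ while staying in $T$ and not being pre-periodic is to converge to an attracting periodic-like point, which would put $x\in\BB_0(f)$, a contradiction. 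So such escaping $x$ form a set contained in $\BB_0(f)\cup\co_f^-(\per(f))$, hence have zero measure inside $\cu(J)$.

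With these reductions, Lemma~\ref{pagina1} applied to the first return map $F$ on $J$ gives $\leb(J\setminus J^\infty)=0$ (where $J^\infty$ is the domain on which all iterates of $F$ are defined) and $\omega_F(x)=J$ for a.e.\ $x$, i.e.\ the forward $f$-orbit of a.e.\ $x\in\cu(J)$ is dense in $J$; unfolding the return map, $\overline{\co_f^+(x)}\supset\overline{\bigcup_{i}f^i(J)}$ which is a finite union of intervals (finitely many branch-image intervals up to the return time, by $\sum|f^j(I)|\le 1$ and equicontinuity) that is forward invariant and transitive — a cycle of intervals containing $x$. This proves item (2). For item (1), on $J^\infty$ the bounded distortion $1+\varepsilon^2$ plus Lemma~\ref{Lemmajhvtr68ijn} gives that each branch of $F$ satisfies $|F'|\ge \frac{1}{\varepsilon K}\cdot\frac{|\text{gap}|}{|J|}$ away from neighborhoods of non-hyperbolic periodic points; combined with the telescoping $|DF^n(x)|=\prod_{j<n}|F'(F^j x)|$ and the fact that, for $x$ not pre-periodic to a non-hyperbolic orbit, the orbit $\{F^j(x)\}$ cannot stay forever near such an orbit, one gets $|DF^n(x)|\to\infty$. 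The choice $\varepsilon<(6K)^{-1}$ is precisely what makes the per-step expansion factor exceed $1$ after accounting for distortion. The main obstacle I anticipate is the last point: controlling orbits that linger near a non-hyperbolic (parabolic) periodic point, where the naive expansion estimate degenerates; handling it cleanly is exactly why Lemma~\ref{Lemmajhvtr68ijn} was set up, and the argument will need the bounded-distortion constant $K$ to absorb the finitely many ``bad'' returns near the parabolic point while the remaining returns supply definite expansion.
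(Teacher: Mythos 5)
There is a genuine gap, and it sits at the heart of your plan: you propose to apply Lemma~\ref{pagina1} directly to the first return map $F$ on $J$, with $V=J^*$, after proving the shrinking condition by a homterval/wandering-interval argument. But the hypothesis of the Proposition is only a branch-wise \emph{distortion} bound $|F'(x)/F'(y)|\le 1+\varepsilon^2$; it gives no expansion. Since every branch of $F$ maps onto $J$, a branch with small distortion can be a nearly neutral homeomorphism of (almost all of) $J$, with fixed points of $F^2$, possibly parabolic. In that situation a subinterval of $J$ of positive measure lies in $\BB_0(f)\cup\co_f^-(\per(f))$ (points attracted to, or mapped onto, periodic-like points created by this central branch), so the conclusion of Lemma~\ref{pagina1} for $G=F$ on $J$ — that $\omega_F(x)=J$ for a.e.\ $x\in J$ — is simply false, and your contradiction argument for the shrinking of $\cp_n(x)$ breaks down precisely there: the nested limit interval $I$ you extract can sit inside $\BB_0(f)\cup\co_f^-(\per(f))$, so the Homterval Lemma does not force it to be wandering and Proposition~\ref{PropWanderingFree} is never contradicted (there is no recurrent point $q$ in the hypotheses of this Proposition to rule that out). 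For the same reason your item (1) argument, which relies on ``definite expansion on most returns,'' has no source of expansion as long as you induce on all of $J$.

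The paper's proof is organized exactly around this difficulty. If some branch has a point $p$ with $|F'(p)|\le 1+\varepsilon^2$, it isolates that branch $I_p$, observes that the interval $(a,b)$ between the extreme fixed points of $F^2|_{I_p}$ is contained in $\BB_0(f)\cup\per(f)$ (hence disjoint from $\cu(J)$), and then induces again: it takes the first return map $\cf_j$ of $F$ to each component $I_j$ of $J\setminus I_p$, writes each return as blocks of passages $H_j=F|_{I_j}$ through the weak branch followed by first-entry maps $G_j$ back into $I_j$, and uses Lemma~\ref{Lemmajhvtr68ijn} (the staircase estimate, with the Koebe bound of Proposition~\ref{KoebeVvS} supplying the constant $K$) to show $|DG_j|\ge\frac{1}{\varepsilon K}\frac{|I_j|}{|J_j|}$, whence $|D\cf_j|>3$ — this is where $\varepsilon<(6K)^{-1}$ is actually used. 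Item (1) follows because every point of $\cu(J)$ outside $\co_f^-(\{a,b\})$ returns to $I_0\cup I_1$ infinitely often, and item (2) follows by applying Lemma~\ref{pagina1} not to $F$ on $J$ but to the uniformly expanding full Markov map $\cf$ on $\ci$ ($=J$ in the expanding case, $=I_\ell$ otherwise), whose distortion along $\cp_n$ is then controlled by combining Koebe with the expansion. Your sketch gestures at Lemma~\ref{Lemmajhvtr68ijn} and at the role of $\varepsilon<(6K)^{-1}$, but without the key structural step — excising the weak central branch and re-inducing on its complement — the argument cannot be completed as proposed.
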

\begin{proof}
Firstly we will assume only that $|F'(x)/F'(y)|\le1+\varepsilon^2$ for every $x,y\in I$ and every connected component $I$ of $J^*$.
As $J\cap\overline{\co_f^+(\cv_f)}=\emptyset$, $F(I)=J$ for every connected component $I$ of $J^*$.
It follows that if $|F'(x)|>1+\varepsilon^2$, $\forall x\in J^*$, then $\lim |(F^n)'(x)|=\lim(1+\varepsilon^2)^n=\infty$,  for every $x\in\cu(J)=\bigcap_{\ell=0}^{\infty}(F^{\ell})(J)$.
Thus, we may assume that $|F'(p)|\le1+\varepsilon^2$ for some $p\in J^*$.

Let $I_p^0$ be the connected components of $J^*$ containing $p$ and $I_p=(F|_{I_p^0})^{-1}(I_p^0)$.
From the distortion control and as $\varepsilon<1/6$, $F'(x)$ and $(F^2)'(x)\le(1+\varepsilon^2)^3<1+\varepsilon$ for every $x\in I_p$.
In particular, we get that  $\frac{\leb(J\setminus I_p)}{\leb(J)}<\varepsilon$ and also that $F^2|_{I_p}$ preserves orientation. Let $a=\inf(\fix(F^2)\cap I_p)$ and $b=\sup(\fix(F^2)\cap I_p)$. Notice that $(a,b)\subset\BB_0(f)\cup\per(f)$, see Figure~\ref{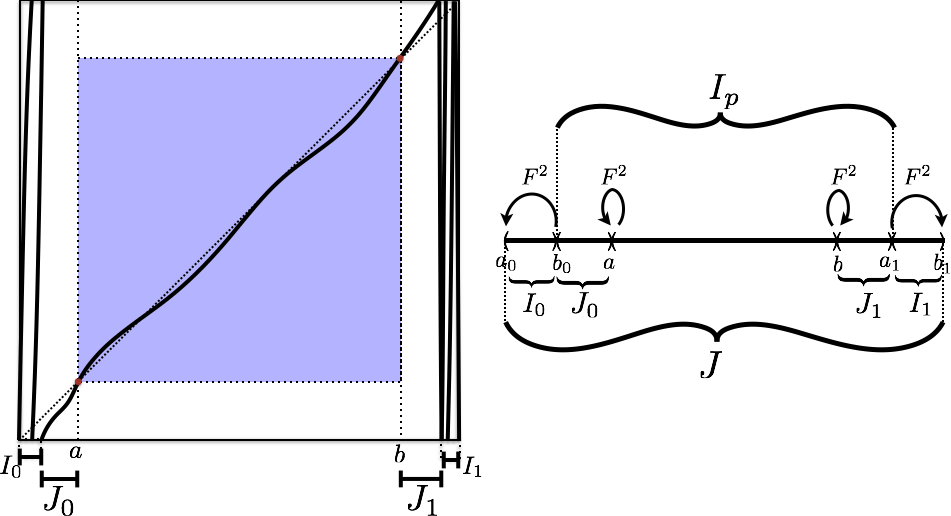}. 
Let $I_0=(a_0,b_0)$ and $I_1=(a_1,b_1)$ be the connected components of $J\setminus I_p$ and let $J_0=(b_0,a)$ and $J_1=(b,a_1)$ be the connected components of $I_p\setminus(a,b)$.

\begin{figure}
  \begin{center}\includegraphics[scale=.32]{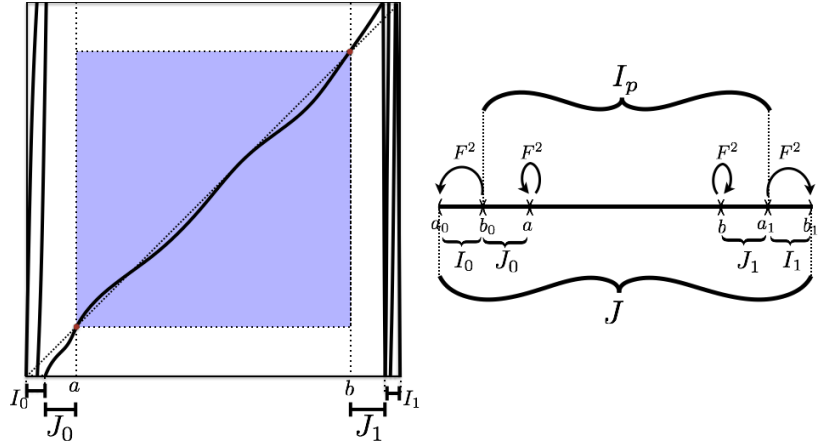}\\
 \caption{\small The picture at the right is the graph of $F^2$. The one at the left displays the essential elements discussed in the proof of Proposition~\ref{Propositionjhvtr68ijn}.}\label{TeoremaMane3bmaps.png}
  \end{center}
\end{figure}

Also let $g_0:[b_0,a]\to\RR$ be the first entry map of $[b_0,a]=\overline{J_0}$ to $[a_0,a]=\overline{I_0\cup J_0}$. Analogously, let $g_1:\overline{J_1}\to\RR$ be the first entry map of $\overline{J_1}$ to $\overline{I_1\cup J_1}$.
Notice that $g_j=F|_{\overline{I_j\cup J_j}}$ if $F|_{I_p^0}$ preserves orientation. Otherwise, we take $g_j=(F|_{\overline{I_j\cup J_j}})^2$. In particular, $|(g_j)'-1|<\varepsilon$.
Let $G_j:J_j^{*}\to I_j$ be the first entry map with respect to $f$ of $J_j$ into $I_j$, where $J_j^{*}$ $=$ $\{x\in J_j\,;\,\co_f^+(x)\cap I_j\ne\emptyset\}$. 
As $G_j$ is a first entry map, with respect to $f$, of $J_j$ to $I_j\subset J\subset T\subset[0,1]\setminus\overline{\co_f^+(\cv_f)}$, it follows from Proposition~\ref{KoebeVvS} that $|DG_j(x)|/|DG_j(y)|\le 4 \exp({O(1)})$, for every $x$ and $y$ in the same connected component of $J_j^*$.
Therefore, 
$|DG_j(x)|/|DG_j(y)|\le 4(1+\varepsilon^2)\exp({O(1)})<K$, for every $x$ and $y$ in the same connected component of $J_j^*$.
Applying Lemma~\ref{Lemmajhvtr68ijn}, we obtain that $G_j'(x)\ge\frac{1}{\varepsilon K}\frac{|I_j|}{|J_j|}$ for every $x\in J_j^*$.

Now consider $\cf_j:I_j^*\to I_j$ to be the first return map of $F$ to $I_j$, where $I_j^*$ $=$ $\{x\in I_j\,;\,\co_F^+(F(x))\cap I_j\ne\emptyset\}$.
As $F$ is the first return map to $J\supset I_0\cup I_1$ with respect to $f$, we can write $\cf_j=F^{R_j(x)}(x)=f^{r_j(x)}(x)$ with $r_j,R_j:I_j^*\to\NN$ and $1\le R_j(x)\le r_j(x)<\infty$.
Note that given $x\in I_j^*$, there are $n\ge0$ and $\alpha_0,\cdots,\alpha_n,\beta_0,\cdots,\beta_n\in\NN$ with $\alpha_j\ge1$ and $\beta_j=0$ or $1$ such that $$\cf_j(x)=
\begin{cases}
G_0^{\beta_0}\circ (G_1^{\beta_n}\circ H_1^{\alpha_n})\circ\cdots\circ(G_1^{\beta_1}\circ H_1^{\alpha_1})\circ H_0^{\alpha_0}(x) &\text{ if }j=0\\
G_1^{\beta_0}\circ (G_0^{\beta_n}\circ H_0^{\alpha_n})\circ\cdots\circ(G_0^{\beta_1}\circ H_0^{\alpha_1})\circ H_1^{\alpha_0}(x) &\text{ if }j=1
\end{cases},
$$
where $H_j:=F|_{I_J}$.

As $|DH_j(x)|\ge \frac{1}{1+\varepsilon}\frac{|J_j|+|I_j|}{|I_j|}>\frac{1}{1+\varepsilon}\frac{|J_j|}{|I_j|}$, we get 
$$|DG_j^{\beta_{\ell}}(x)|\,|DH_j^{\alpha_{\ell}}(y)|\ge\frac{1}{\varepsilon(1+\varepsilon)K}>3,$$
for every $x\in J_j$, $y\in I_j$, $j\in\{0,1\}$ and $\ell\in\{0,...,n\}$. As a consequence,
\begin{equation}\label{EqManeoiuy7}|D\cf_j(x)|>3,\;\forall\,x\in I_j^*.
\end{equation}
In particular,
$\lim_{n\to\infty}|D(\cf_j(x))^n|=\infty$ $\forall\,x\in \bigcap_{\ell=0}^{\infty}(\cf_j)^{-\ell}(I_j)$.
Moreover,
as $$\cu(J)\setminus(\co_f^{-}(a)\cup\co_f^{-}(b))=\bigg(\bigcap_{\ell=0}^{\infty}(\cf_0)^{-\ell}(I_-)\bigg)\cup\bigg(\bigcap_{\ell=0}^{\infty}(\cf_1)^{-\ell}(I_1)\bigg),$$ either $\lim|(F^n)'(x)|=\lim|(\cf_i^n)'(x)|=\infty$ for some $i\in\{0,1\}$, or $x\in\co_f^-(u)$ with $u\in\{a,b\}$ being a non-hyperbolic periodic point.
This concludes the proof of the first item of the proposition. 


Now, we shall prove item (2) of the lemma. For that,
suppose that $\leb(\cu(J))>0$. If $|F'|>1+\varepsilon^2$ then set $\cf:=F$. Otherwise, $\leb(\cu(J)\cap I_0)>0$ or $\leb(\cu(J)\cap I_1)>0$, and in this case, consider any $\ell\in\{0,1\}$ such that $\leb(\cu(J)\cap I_{\ell})>0$ and set $\cf:=\cf_{\ell}$. So, let
$$
\cR=\begin{cases}
r &\text{ if }\cf=F\\
R_{\ell} &\text{ if }\cf=\cf_{\ell}
\end{cases},\;\;
\ci=\begin{cases}
J &\text{ if }\cf=F\\
I_{\ell} &\text{ if }\cf=\cf_{\ell}
\end{cases}
\;\;\text{ and }\;\;
\ci^*=\begin{cases}
J^* &\text{ if }\cf=F\\
I_{\ell}^* &\text{ if }\cf=\cf_{\ell}
\end{cases}.
$$

Observe that
\begin{equation}\label{Eqpoihg6y3nmdo}
|D\cf(x)|>1+\varepsilon^2\,\,\forall\,x\in\ci^*.
\end{equation}

As $\partial I_{\ell}\cap J^*=\emptyset$ and $J$ is a nice interval, it follows that $\ci$ is also a nice interval. Thus, $\cf$ is a {\em full induced Markov map}, i.e., $\cf(U)=\ci$ for every connected component $U$ of $\ci^*$

Set $\cp_n$ as the collection of all connected component of $\cf^{-n}(\ci^*)$. If $x\in \cf^{-n}(\ci^*)$, let $\cp_n(x)$ be the element of $\cp_n$ containing $x$. So, we always have $\cf^{n+1}(\cp_n(x))=\ci$.

Because $F$ is the first return map of $f$ to $J$ and that either $\cf$ is $F$ or it is the return map of $F$ to $I_{\ell}$, it follows that $\cf$ itself is the first return map of $f$ to $\ci\subset J\subset T\subset[0,1]\setminus\overline{\co_f^+(\cv_f)}$. Thus, from Proposition~\ref{KoebeVvS}, we obtain that
\begin{equation}\label{Eqijs552vcc}
|(f^j)'(x)/(f^j)'(y)|\le K_0
\end{equation}
for every $x,y\in I$, $I\in\cp_0$ and $1\le j\le\cR(I)$, where $K_0=\big(\frac{1+\delta'}{\delta'}\big)^2\exp(O(1))$.
Furthermore,
\begin{equation}\label{Eqkjdg13653}
|\cf'(x)/\cf'(y)|\le e^{\gamma_0\sum_{j=0}^{\cR(I)}|f^j(x)-f^j(y)|},
\end{equation}
for every $x,y\in I$ and $I\in\cp_0$, where $\gamma_0=O(1)+2/|J|$.

As a consequence of the bounded distortion (\ref{Eqijs552vcc}), we have the following Claim.
\begin{claim}
$\sum_{j=1}^{\cR(I)}\leb(f^j(V))\le \frac{K_0}{|\ci|}\leb(\cf(V))$ for every Borel set $V\subset I$ and $I\in\cp_0$.
\end{claim}
Indeed, it follows from (\ref{Eqijs552vcc}) that $$\frac{\leb{(f^j(V))}}{\leb(f^j(I))}\le K_0\frac{\leb(\cf(V))}{\leb(\cf(I))}=K_0\frac{\leb(\cf(V))}{|\ci|},$$
for every  $1\le j\le\cR(I)$. Thus,
$$\sum_{j=1}^{\cR(I)}\leb(f^j(V))=\sum_{j=1}^{\cR(I)}\frac{\leb(f^j(V))}{\leb(f^j(I))}\leb(f^j(I))\le$$
$$\le\sum_{j=1}^{\cR(I)}\frac{K_0}{|\ci|}\leb(\cf(V))\leb(f^j(I))=\frac{K_0}{|\ci|}\leb(\cf(V))\sum_{j=1}^{\cR(I)}\leb(f^j(I))\le \frac{K_0}{|\ci|}\leb(\cf(V)).$$

From the Claim above and (\ref{Eqkjdg13653}), it follows that 
$$
|\cf'(x)/\cf'(y)|\le e^{\gamma|\cf(x)-\cf(y)|},
$$
for every $x,y\in I$ and $I\in \cp_0$, where $\gamma=K_0\gamma_0/|\ci|$.
Therefore, using the expansion (\ref{Eqpoihg6y3nmdo}), we get
\begin{equation}\label{Eqoiuh8778}
\bigg|\frac{(\cf^n)'(x)}{(\cf^n)'(y)}\bigg|\le e^{\gamma\sum_{j=0}^{n-1}|\cf^j(x)-\cf^j(y)|}\le e^{\gamma\sum_{j=0}^{n-1}|\cf^n(x)-\cf^n(y)|/(1+\varepsilon^2)^{j}}\le\Gamma,
\end{equation}
for every $x,y\in\cp_n(q)$ and $q\in\bigcap_{j\ge0}\cf^{-j}(\ci)$, where $\Gamma=e^{\gamma(1+1/\varepsilon^2)}$.
Finally, applying Lemma~\ref{pagina1}, we get that $\omega_{\cf}(x)=\overline{\ci}$ for almost all $x\in\ci$. In particular, $\omega_f(x)$ is a cycle of intervals containing $x$, for almost every $x\in\cu(J)\cap\ci$. If $\ci=J$ the proof finished. The proof is also concluded if $\ci\ne J$, i.e., when $|Df(p)|\le1+\varepsilon^2$ for some $p\in J$, because in this case $\cu(J)\cap(a,b)=\emptyset$, that is, $\cu(J)=\cu(J)\cap\overline{I_0}\cup\cu(J)\cap\overline{I_1}$.\end{proof}

\section{Proofs of Theorem~\ref{MainTheoPreMane} and Corollary~\ref{ThMane} (Mañé's Theorem)}

In this section we provide a new proof of Mañé's theorem mentioned before. To do so, we show the existence of induced Markov maps and use them to prove the hyperbolicity of the points that avoid the critical set. Notice that typically one uses some kind of hyperbolicity to build up Markov partitions or induced Markov maps, in quite the opposite way to what we are doing here.

\begin{Remark}\label{RemarkExt001}
Given any piecewise $C^2$ local diffeomorphism $f:[0,1]\setminus\cc_f\to[0,1]$, where $C_f\subset(0,1)$ is finite, we can obtain an extension $\widetilde{f}:[-1,2]\setminus\cc_f\to[-1,2]$ of $f$ that is also a local diffeomorphism with the same exceptional set $\cc_f$ and satisfying the conditions below (see Figure~\ref{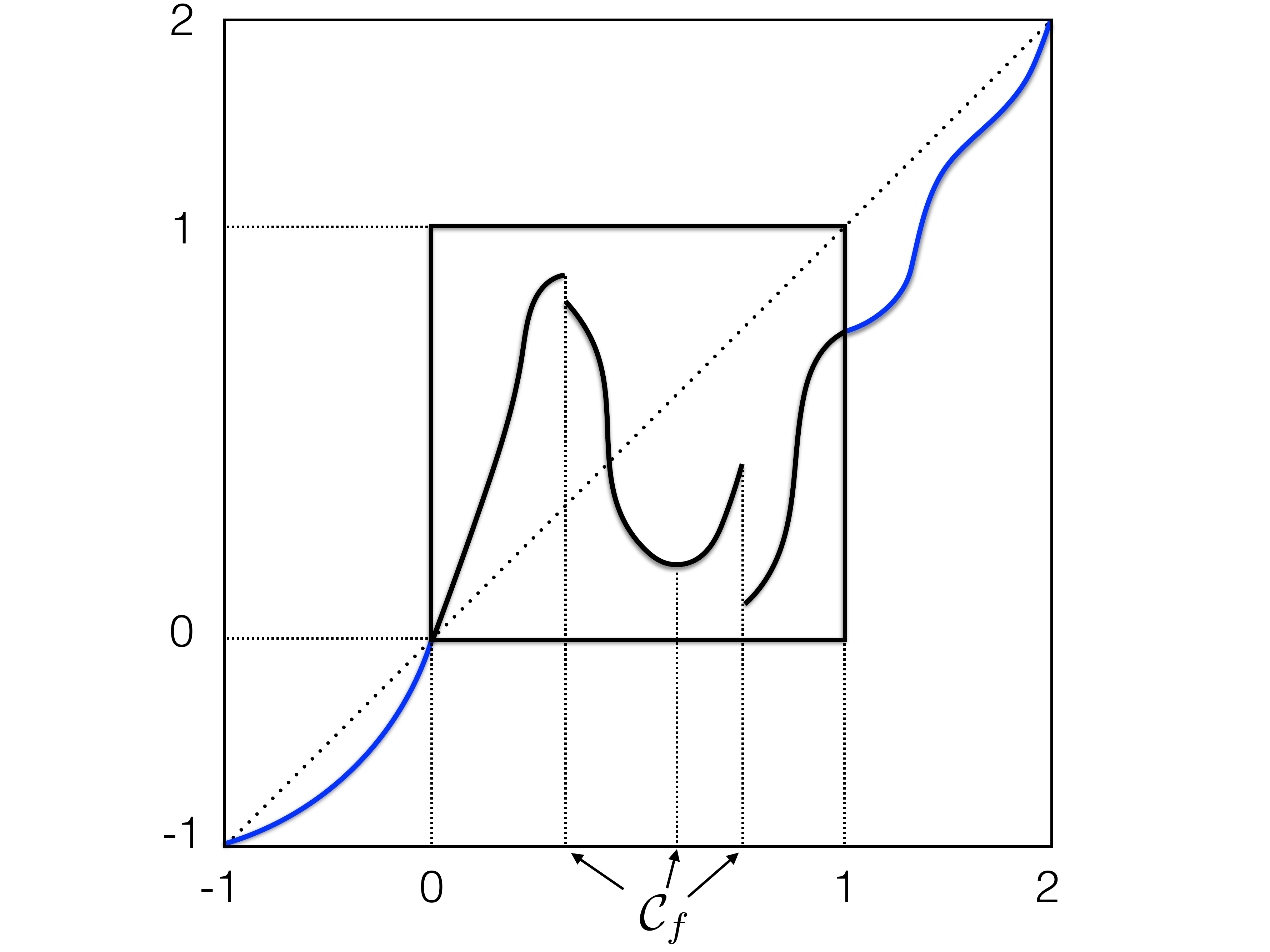}).  
\begin{enumerate}
	\item $\widetilde{f}(\{-1,2\})\subset\{-1,2\}$.
	\item If $x\in(-1,0)\cup(1,2)$ then either $\omega_{\widetilde{f}}(x)\subset\{-1,2\}$ or  $\omega_{\widetilde{f}}(x)\subset[0,1]$.
\end{enumerate}

\begin{figure}
  \begin{center}\includegraphics[scale=.18]{Extensao.pdf}\\
 \caption{}\label{Extensao.pdf}
  \end{center}
\end{figure}

In particular, $\widetilde{f}$ has the same non-periodic attractors and at most two more attracting periodic orbits which are contained in $\{-1,2\}$.

\end{Remark}

\begin{Lemma}\label{Lemmaiuywe4jj}
Let $p$ be a periodic-like point and $q\in[0,1]\setminus(\BB_0(f)\cup\co_f^-(\per(f)))$.
\begin{enumerate}
\item If $p$ is a left side periodic-like point then $$p\in\overline{\co_f^+(q)\cap[0,p)} \iff p\in\overline{(\omega_f(q)\setminus\per(f))\cap[0,p)}.$$
\item If $p$ is a right side periodic-like point then $$p\in\overline{\co_f^+(q)\cap(p,1]}\iff p\in\overline{(\omega_f(q)\setminus\per(f))\cap(p,1]}.$$
\end{enumerate}
\end{Lemma}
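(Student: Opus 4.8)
I will prove statement (1); statement (2) follows by the mirror argument, exchanging left- and right-periodicity and the half-intervals $[0,p)$ and $(p,1]$. The implication ($\Leftarrow$) is immediate: every point of $\omega_f(q)$ is a limit of points of the forward orbit $\co_f^+(q)$, and a point of $\omega_f(q)$ lying strictly below $p$ is a limit of orbit points that eventually lie below $p$; hence $(\omega_f(q)\setminus\per(f))\cap[0,p)\subseteq\omega_f(q)\cap[0,p)\subseteq\overline{\co_f^+(q)\cap[0,p)}$, and taking closures proves it.

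For ($\Rightarrow$), assume $p\in\overline{\co_f^+(q)\cap[0,p)}$. Since a finite set is closed, $\co_f^+(q)\cap[0,p)$ must be infinite; thus $q\notin\co_f^-(\cc_f)$, the forward orbit of $q$ is infinite, $\omega_f(q)$ is a nonempty closed set with $p\in\omega_f(q)$, no point of $\co_f^+(q)$ is periodic (because $q\notin\co_f^-(\per(f))$), and there are $n_j\to\infty$ with $y_j:=f^{n_j}(q)\nearrow p$, $y_j<p$. Suppose, towards a contradiction, that $p\notin\overline{(\omega_f(q)\setminus\per(f))\cap[0,p)}$, and fix $\eta>0$ with $\omega_f(q)\cap(p-\eta,p)\subseteq\per(f)$. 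Let $n$ be the period of the left-periodic point $p$ and $g$ the branch of $f^{n}$ on a left neighbourhood $(p-\delta_0,p)$; by the definition of left-periodicity, $g$ is an increasing $C^{2}$ diffeomorphism onto its image with $g(x)<p$ for $x<p$ and $g(x)\to p$ as $x\nearrow p$, and, shrinking $\delta_0$, we may assume $\delta_0\le\eta$ and $C:=\sup_{(p-\delta_0,p)}|g'|<\infty$. \textbf{Step 1:} using $q\notin\BB_0(f)$, I claim that after a further shrinking $g$ has no fixed point in $(p-\delta_0,p)$ and $g(x)<x$ there. If not, then either the fixed points of $g$ are cofinal toward $p$, so for large $j$ the point $y_j$ lies in a component $(a,b)$ of $(p-\delta_0,p)\setminus\fix(g)$ with $b<p$, whence, $g|_{(a,b)}$ being an increasing homeomorphism of $(a,b)$ onto itself with no interior fixed point, the $g$-iterates of $y_j$ converge monotonically to $a$ or to $b$, so $\omega_f(q)$ is a periodic-like orbit attracting a one-sided neighbourhood and $q\in\co_f^-(\BB_0(f))=\BB_0(f)$; or else $g>\mathrm{id}$ on some $(p-\delta_1,p)$, so the $g$-iterates of every point there increase to $p$ and again $q\in\BB_0(f)$. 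Both alternatives contradict the hypotheses. (Here $g$ is monotone because it is a composition of local diffeomorphisms, and the Homterval Lemma rules out a left neighbourhood of $p$ consisting of periodic points, which would lie in $\co_f^-(\per(f))$.)

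\textbf{Step 2:} the orbit of $q$ sweeps every small scale below $p$. For $y=f^{t}(q)\in(p-\delta_0,p)$ the sequence $g^{i}(y)=f^{t+in}(q)$ strictly decreases (by Step 1) and leaves $(p-\delta_0,p)$ after finitely many steps, and $1\le (p-g^{i+1}(y))/(p-g^{i}(y))\le C$; hence the distances $p-g^{i}(y)$ grow from $p-y$ to some value $\ge\delta_0$ by factors at most $C$, so every number in $[p-y,\delta_0/C]$ is within a factor $C$ of some $p-g^{i}(y)$. Applying this with $y=y_j$ and letting $j\to\infty$ yields $\omega_f(q)\cap[p-C\rho,p-\rho]\ne\emptyset$ for every $\rho\in(0,\delta_0/C]$; in particular $\omega_f(q)\cap(p-\delta_0,p)$ is infinite and accumulates at $p$ from below, hence so does $\per(f)\cap(p-\delta_0,p)$ by our assumption.

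\textbf{Step 3}, the main obstacle, is to show that this last configuration — infinitely many periodic points of $f$, all in $\omega_f(q)$, accumulating at $p$ from below inside $(p-\delta_0,p)$, where $g=f^{n}<\mathrm{id}$ — is impossible. The plan is to use forward invariance of $\omega_f(q)$: $g$ carries each such periodic point $w$ to $g(w)=f^{n}(w)$, which lies on the same finite $f$-orbit as $w$ and, as long as it remains in $(p-\delta_0,p)$, is again one of these periodic points; together with the bounded distortion of $g$ near $p$ from Step 2, this should confine them to finitely many $f$-orbits — contradicting that they accumulate at $p$, since an $f$-orbit is finite and does not contain $p$ — unless their periods are unbounded. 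The latter case is excluded by \cite{MS89}: periodic orbits of period at least some $n_0$ are hyperbolic repellers, and for a hyperbolic repelling periodic point close to $p$ the argument of Steps 1--2, run at that point, again forces $q\in\BB_0(f)\cup\co_f^-(\per(f))$. This contradiction shows that $\omega_f(q)\setminus\per(f)$ has points of $[0,p)$ arbitrarily close to $p$, i.e. $p\in\overline{(\omega_f(q)\setminus\per(f))\cap[0,p)}$, completing the proof.
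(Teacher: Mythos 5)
Your set-up and Steps 1--2 are sound and essentially parallel the paper's argument: you rule out fixed points of the left branch $g$ of $f^{n}$ near $p$ (and the attracting alternative $g>\mathrm{id}$) using $q\notin\BB_0(f)\cup\co_f^-(\per(f))$, and you use the resulting $g<\mathrm{id}$ dynamics to produce points of $\omega_f(q)$ in a fundamental-domain region at every small scale below $p$. The genuine gap is Step 3, which is exactly the point of the lemma: you must show that these limit points need not all be periodic, and what you offer is a plan, not a proof. The claim that forward invariance plus bounded distortion ``should confine them to finitely many $f$-orbits'' is not justified (nothing in Steps 1--2 prevents infinitely many distinct periodic orbits of $f$ from entering and leaving $(p-\delta_0,p)$ and accumulating at $p$ from below), and the fallback via \cite{MS89} is incorrect as stated: accumulating on a hyperbolic repelling periodic point does not force $q\in\BB_0(f)\cup\co_f^-(\per(f))$ --- orbits routinely accumulate on repelling periodic points while lying in no basin and in no preimage of a periodic point, so ``running Steps 1--2 at that point'' yields no contradiction.

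The paper closes this step with a different and elementary device that your argument is missing. Having found, for the given $\delta$, a point $q_\delta\in\omega_f(q)$ in the fundamental domain $[p_\delta,p_\delta']$ of the branch $f^{\ell}|_{[a,p)}$ (your $g$), one does not try to exclude the possibility that $q_\delta$ is periodic; instead, if $q_\delta\in\per(f)$, one takes its successive preimages $q_{\delta,i}=(f^{\ell}|_{[a,p)})^{-i}(q_\delta)$ under the branch. These lie in $(p-\delta,p)$ (the branch inverse pushes points toward $p$ since $f^\ell<\mathrm{id}$ there), they belong to $\omega_f(q)$ because $q_{\delta,i}=\lim_j f^{(k_j-i)\ell}(q_j)$ where $f^{k_j\ell}(q_j)$ are the orbit points landing in the fundamental domain, and for $i$ larger than the period they are strictly pre-periodic, hence not in $\per(f)$. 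This immediately produces a point of $(\omega_f(q)\setminus\per(f))\cap(p-\delta,p)$ and finishes the proof without any hyperbolicity input. To repair your proof you would need either this preimage trick or a correct substitute for Step 3; as written, Step 3 does not go through.
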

\begin{proof}
Let us prove the first item, the poof of the second one is analogous.
As ``$\Leftarrow$'' is immediate, we may assume that  $p$ is a left side periodic-like point and $p\in\overline{\co_f^+(q)\cap[0,p)}$. Let $\ell\ge1$ be such that $f^\ell(p_-)=p_-$, that is, $f^\ell(x)\nearrow x$ when $x\nearrow p$.
Thus, there exists $0<a<p$ such that $f^\ell|_{[a,p)}$ is a preserving orientation diffeomorphism and $\lim_{x\to p}f^\ell|_{[a,p)}(x)=p$.

Given $\delta>0$, we need to show that $(\omega_f(q)\setminus\per(f))\cap(p-\delta,p)\ne\emptyset$. 
Notice that if $f^{\ell}(p')=p'$ for some $p'\in[a,p)$, then $f^{\ell}|_{[p',p)}$ can be extended to a diffeomorphism $F$ of $[p',p]$ into itself with $F(p')=p'$ and $F(p)=p$.
Thus, $[p',p]\subset\BB_0(F)\cup\co_F^-(\per(F))\subset\BB_0(f)\cup\co_f^-(\per(f))$.
As $\co_f^+(q)\cap(p',p)\ne\emptyset$, we get a contradiction. 

So, as $f^{\ell}|_{[a,p)}$ preserves orientation, either $f^{\ell}(x)<x$ $\forall\,x\in[a,p)$ or $f^{\ell}(x)>x$ $\forall\,x\in[p-r,p)$. As $f^{\ell}(x)>x$ $\forall\,x\in[a,p)$ implies that $p_-$ is a periodic-like attractor, we get that $f^{\ell}(x)<x$ $\forall\,x\in[a,p)$.

Given any $\delta\in(0,p-a)$, write $p_\delta=p-\delta$ and $p_\delta'=(f^{\ell}|_{[a,q)})^{-1}(p_\delta)$. 
Consider a sequence $n_j\to\infty$ such that $(p_\delta,p)\ni q_j:=f^{n_j}(q)\nearrow p$.
Note that, for each $j\ge1$ there exists a unique integer $k_j\ge0$ such that $f^{k_j\ell}(q_j)\in[p_\delta,p_\delta']$ and $f^{i\,\ell}(q_j)\in[p_\delta,p)$ for every $0\le i<k_j$.
As a consequence, $\#\big(\co_f^+(q)\cap[a,a']\big)=\infty$.
Thus, there is some $q_\delta\in[p_\delta,p_\delta']\cap\omega_f(q)$. If $q_\delta\notin\per(f)$ the proof is complete.

So, we may suppose that $q_{\delta}\in\per(f)$ and let $s$ be the period of $q_{\delta}$ with respect to $f^{\ell}$. 
For each $i>s$, let $q_{\delta,i}=(f^{\ell}|_{[a,p)})^{-i}(q_\delta)$. As $q_{\delta,i}=\lim_{j\to\infty}f^{(k_j-i)\ell}(q_j)$, we get that $q_{\delta,i}\in(p-\delta,p)\cap\omega_f(q)$ $\forall\,i>s$. On the other hand, as $f^{i\ell}(q_{\delta,i})=q_\delta$ and $f^{n\ell}(q_{\delta,i})\ne q_{\delta,i}$ for every $0\le n< i$ with $i>s$, we get that $q_{\delta,i}$ is a pre-periodic (but not periodic) point of $\omega_f(p)$, which concludes the proof.

\end{proof}

\begin{Corollary}\label{Lemmaprepre987}
Let $\cc_f\subset(0,1)$ be a finite set and $f:[0,1]\setminus\cc_f\to[0,1]$ a local homeomorphism. If $p\in[0,1]\setminus\BB_0(f)\cup\co_f^-(\per(f))$ then $\omega_f(p)\not\subset\per(f)$.
\end{Corollary}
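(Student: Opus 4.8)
The plan is to argue by contradiction from Lemma~\ref{Lemmaiuywe4jj}. Suppose $\omega_f(p)\subset\per(f)$. Since $p\notin\co_f^-(\per(f))$ (and $p$ lies in the domain of every iterate, hence $p\notin\co_f^-(\cc_f)$), the forward orbit of $p$ is infinite, so $\omega_f(p)\neq\emptyset$ by compactness of $[0,1]$. Fix $q\in\omega_f(p)$; by assumption $q\in\per(f)$, so $f^m(q)=q$ for some $m\ge1$, and $q\notin\co_f^-(\cc_f)$ by the very definition of $\per(f)$. Consequently $f^m$ restricts to a homeomorphism of a (one- or two-sided) neighborhood of $q$ inside $[0,1]$.

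The next step is to verify that $q$ is a periodic-like point in the precise sense of the definition. If $q\in(0,1)$, then $f^m$ is monotone near $q$: taking $\ell=m$ when it is increasing and $\ell=2m$ when it is decreasing, one gets $f^\ell(x)\nearrow q$ as $x\nearrow q$ and $f^\ell(x)\searrow q$ as $x\searrow q$, so $q$ is both a left- and a right-side periodic-like point. If $q\in\{0,1\}$, say $q=0$, the fact that $f$ maps into $[0,1]$ forces $f^m$ to be increasing on $[0,\varepsilon)$, so $q$ is a right-side periodic-like point (symmetrically for $q=1$). I expect this to be the step requiring the most care, since it is where the ``local homeomorphism'' and ``$f$ maps into $[0,1]$'' hypotheses are used; beyond that the argument is short.

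Finally, since $p\notin\co_f^-(\per(f))$ we have $f^j(p)\ne q$ for all $j\ge0$, while $q\in\omega_f(p)$ gives $n_j\to\infty$ with $f^{n_j}(p)\to q$ and $f^{n_j}(p)\ne q$. Passing to a subsequence, all $f^{n_j}(p)$ lie strictly on one side of $q$; say $f^{n_j}(p)<q$ for every $j$ (the other case being symmetric). Then $q>0$, $q\in\overline{\co_f^+(p)\cap[0,q)}$, and $q$ is a left-side periodic-like point by the previous paragraph. Applying Lemma~\ref{Lemmaiuywe4jj}(1) with the periodic-like point taken to be our $q$ and the auxiliary point taken to be our $p\in[0,1]\setminus(\BB_0(f)\cup\co_f^-(\per(f)))$, we obtain $q\in\overline{(\omega_f(p)\setminus\per(f))\cap[0,q)}$; in particular $\omega_f(p)\setminus\per(f)\ne\emptyset$, contradicting $\omega_f(p)\subset\per(f)$. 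The case $f^{n_j}(p)>q$ is handled identically via Lemma~\ref{Lemmaiuywe4jj}(2). This contradiction proves $\omega_f(p)\not\subset\per(f)$.
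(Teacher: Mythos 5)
Your argument is correct and follows exactly the route the paper intends: the paper states this as an unproved corollary of Lemma~\ref{Lemmaiuywe4jj}, and your proof (pick $q\in\omega_f(p)$, note a periodic point off $\cc_f$ is periodic-like on the side from which the orbit of $p$ accumulates, then apply the lemma with the roles of $p$ and $q$ swapped to produce non-periodic points in $\omega_f(p)$) is precisely that deduction. The side verifications you add (monotonicity of $f^m$ or $f^{2m}$ near $q$, the boundary case, and $f^j(p)\ne q$ from $p\notin\co_f^-(\per(f))$) are sound.
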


\begin{proof}[Proof of Theorem~\ref{MainTheoPreMane}]
	Let $p\in[0,1]\setminus\big(\BB_0(f)\cup\co_f^-(\per(f))\big)$ such that $\overline{\co_f^+(p)}\cap\cc_f=\emptyset$.
	Extending $f$ if  necessary, as in Remark~\ref{RemarkExt001}, we may assume that $f( \{0,1\}) \subset \{0,1\}$ and that $\overline{\co_f^+(p)}\cap\{0,1\}=\emptyset$.
	Let $U$ be an open neighborhood of $\cc_f$ such that $\overline{\co_f^+(p)}\cap \overline{U}=\emptyset$.
	Let $g:[0,1]\setminus\cc_g\to[0,1]$ be a $C^2$ non-flat local diffeomorphism such that $g|_{[0,1]\setminus U}\equiv f|_{[0,1]\setminus U}$, $\cc_g=\cc_f$ and $\cv_{g}=\{g(c_{\pm})\,;\,c\in\cc_g\}\subset\{0,1\}$ (see Figure~\ref{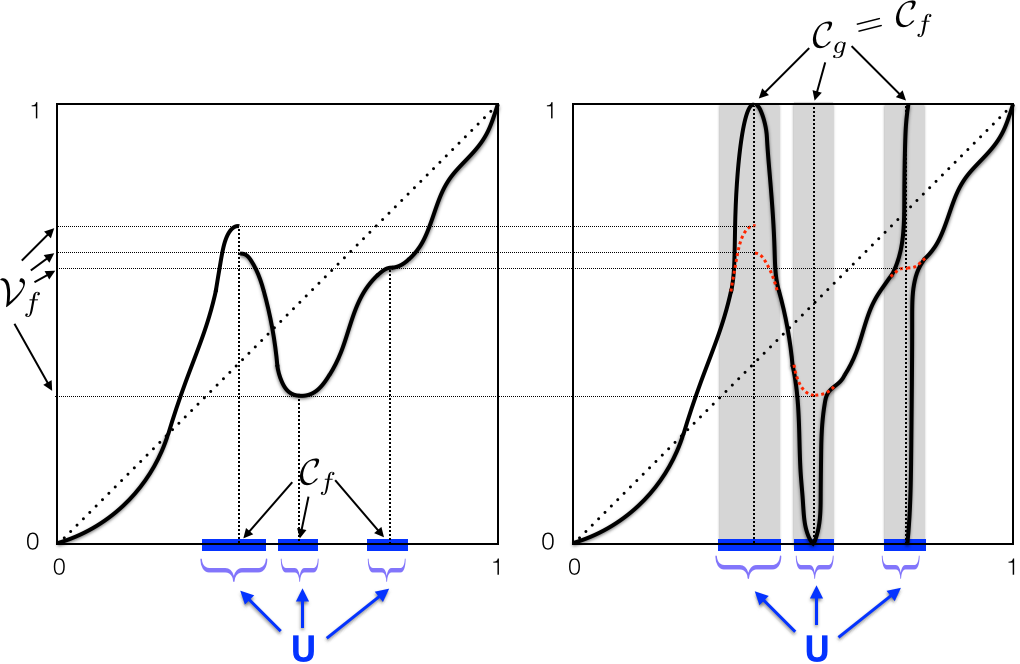}).
	
	As $\overline{\co_g^+(\cv_g)}\subset\{0,1\}$, we have $\overline{\co_g^+(p)}\cap\overline{\co_g^+(\cv_g)}=\overline{\co_f^+(p)}\cap\{0,1\}=\emptyset$.
	By Corollary~\ref{Lemmaprepre987}, consider $q\in\omega_g(p)\setminus\per(g)$.
	Therefore, it follows from Proposition~\ref{Propositionjhvtr68ijn} that $$\sup_{n}|Df^n(p)|=\sup_{n}|Dg^n(p)|=\lim_n|DG^n(p)|=\infty,$$
	where $G$ is the first return time to a small nice interval $J=(\alpha,\beta)\ni q$ given by Lemma~\ref{LemmaRetnicgdt5}.
	This proves the first statement of Theorem~\ref{MainTheoPreMane}. 
	
	To prove the second statement, let $\Lambda$ be the set of all $x\in[0,1]\setminus\big(\BB_0(f)\cup\co_f^-(\per(f))\big)$ such that $\omega_f(x)\cap\cc_f=\emptyset$.
	Suppose that $\leb(\Lambda)>0$.
	As $f^*\leb\ll\leb$, $\leb(\Lambda)=\leb(\Lambda\setminus\co_f^-(\cv_f))$.
	So, $\exists\ell\ge1$ such that $\leb(\Lambda_\ell)>0$, where $$\Lambda_{\ell}=\{x\in\Lambda\setminus\co_f^-(\cc_f)\,;\,\overline{\co_f^+(x)}\cap B_{1/\ell}(\cc_f)=\emptyset\}.$$

Given any $\varepsilon>0$ and $p\in\Lambda_{\ell}$, it follows from Corollary~\ref{Lemmaprepre987},  Lemma~\ref{LemmaRetnicgdt5} and Proposition~\ref{Propositionjhvtr68ijn} that there exist a point $q_p\in\omega_f(p)\setminus\per(f)$ and a nice interval $J_p$ containing $q_p$ 
such that $\omega_f(x)$ is a cycle of intervals for almost every $x\in\cu(J_p)$, where $\cu(J_p):=\big\{x\in[0,1]\setminus\big(\BB_0(f)\cup\co_f^-(\per(f))\big)\,;\,\omega_f(x)\cap J_p\ne\emptyset\big\}$.

Consider now any sequence $p_n\in\Lambda_{\ell}$ such that $$\bigcup_{p\in\Lambda_{\ell}}J_p=\bigcup_{n\ge1}J_{p_n}$$ and let $W_n=\{x\in\Lambda_{\ell}\,;\,\omega_f(x)\cap J_{p_n}\ne\emptyset\}$. As $\Lambda_{\ell}=\bigcup_{n\ge1}W_n$, let $m\ge1$ be such that $\leb(W_m)>0$.
As $\cu(J_{p_m})\supset W_m$, it follows that there is a positive set of points $x$ of $\Lambda$ such that $\omega_f(x)$ is a cycle of intervals. This is a contradiction to the  well known fact that every cycle of intervals contains a point of $\cc_f$. Indeed, suppose that $I$ is an interval of a cycle of interval and that $f^J(I)\cap\cc_f=\emptyset$ $\forall\,n\ge1$. Thus, $I$ is a homterval that is not a wandering one. So, as a consequence of the Homterval Lemma, $I\subset\BB_0(f)\cup\co_f^-(\per(f))$, which is a contradiction. 
\begin{figure}
  \begin{center}\includegraphics[scale=.25]{Extensao3.png}\\
 \caption{}\label{Extensao3.png}
  \end{center}
\end{figure}
\end{proof}

\begin{proof}[Proof of Corollary~\ref{ThMane}]

Let $\Lambda=\bigcup_{n\ge0}f^n([0,1]\setminus U)$. By hypothesis, if $x\in\Lambda\cap\co_f^-(\per(f))$ then $\sup_n|Df^n(x)|=\infty$.
On the other hand, if $x\in\Lambda\setminus\per(f)$, it follows from Theorem~\ref{MainTheoPreMane} that $\sup_n|Df^n(x)|=\infty$.
By compactness, there is $\ell\ge1$ such that $|Df^{\ell}(x)|\ge2$ for every $x\in\Lambda$.
This means that $\Lambda$ is uniformly expanding set, as stated in the Corollary.

Let $K=[0,1]\setminus U$. We claim that, for each $\varepsilon>0$ such that $B_{\varepsilon}(\Lambda)\subset K$, $\exists\,n_0$ with $\min\{j\ge1\,;\,f^j(x)\in U\}\le n_0$ for all $x\in B_{\varepsilon}(\Lambda)$, where $B_{\varepsilon}(\Lambda)=\bigcup_{p\in\Lambda}B_{\varepsilon}(p)$.
Otherwise, there is a sequence of points $x_n\in K\setminus B_{\varepsilon}(\Lambda)$ and a sequence of integers $0\le j_n\nearrow\infty$ such that $f^{i}(x_n)\in K$ for every $0\le i\le j_n$.
Thus, taking a subsequence, we may assume that $x_n$ converges to a point $p\in K\setminus B_{\varepsilon}(\Lambda)$.
But this implies that $f^i(p)\in K$, $\forall\,i\ge0$, which is not possible since $p\notin\Lambda=\bigcap_{j\ge0}f^j(K)$.

Let $C_0=\min\{|Df(x)|\,;\,x\in K\}>0$. As $Df$ is continuous on $K$, and as $\Lambda$ is uniformly expanding and invariant, there are $C_1>0$, $\lambda>1$ and $\varepsilon>0$ so that $B_{\varepsilon}(\Lambda)\subset K$ and $|Df^n(x)|\ge C_1\lambda^n$ whenever $f^j(x)\in B_{\varepsilon}(\Lambda)$ $\forall0\le j<n$.
Therefore, for every $x\in\bigcap_{i=0}^{n-1}f^{-i}(K)$, we get 
$|Df^n(x)|\ge C_0^{n_0}|Df^{n-n_0}(x)|\ge C\lambda^n$,
where $C=C_0^{n_0}C_1/\lambda^{n_0}$.

\end{proof}

\section{Finiteness of non-periodic attractors: proofs of Theorems~\ref{mainTheoremMTheoFofA} and \ref{mainTheoremClassification}}

Given a map $f$ of the interval $[0,1]$, let $\BB_1(f)$ be the set of all points $x$ such that $\omega_f(x)$ is a cycle of intervals. 

\begin{Lemma}\label{tudoounadadershchw} 
If $f:[0,1]\setminus\cc_f\to[0,1]$ is a $C^2$ non-flat local diffeomorphism, where $\cc_f\subset[0,1]$ is a finite set, then
$$\omega_f(x)\subset\overline{\co_f^+(\cv_f)},$$
for almost every $x\in[0,1]\setminus\big(\BB_0(f)\cup\BB_1(f)\cup\co_f^-(\per(f)\big))$.
\end{Lemma}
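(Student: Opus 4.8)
The plan is to show that the complement of $\overline{\co_f^+(\cv_f)}$ meets $\omega_f(x)$ for only a measure-zero set of points $x$ outside $\BB_0(f)\cup\BB_1(f)\cup\co_f^-(\per(f))$. Set $N := [0,1]\setminus\big(\BB_0(f)\cup\co_f^-(\per(f))\big)$ and let
$$
E := \{x \in N \setminus \BB_1(f) \,;\, \omega_f(x) \cap \big([0,1]\setminus\overline{\co_f^+(\cv_f)}\big) \neq \emptyset\}.
$$
I want to prove $\leb(E) = 0$. First I would note that $[0,1]\setminus\overline{\co_f^+(\cv_f)}$ is open, hence a countable union of its connected components $T$; and each such $T$ is a nice interval in the sense used in Section~3 (its endpoints lie in $\overline{\co_f^+(\cv_f)}$). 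If $x \in E$ then $\omega_f(x)$ contains some point $q$ lying in one of these components $T$. The key subtlety is whether $q$ can be taken to be non-periodic and not a periodic-like attractor: by Corollary~\ref{Lemmaprepre987}, since $x \in N$ we have $\omega_f(x) \not\subset \per(f)$, so $\omega_f(x)$ contains a point $q \notin \per(f)$; and $q$ cannot be an attracting periodic-like point (its basin would be an open set meeting $\beta_f$, but then $x$ would be attracted to it and lie in $\BB_0(f)$, contradicting $x \in N$ — more carefully, $q \in \omega_f(x)$ with $q$ a periodic-like attractor forces $\omega_f(x)$ to be that periodic-like orbit, hence $x\in\BB_0(f)$). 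Also $q \notin \overline{\co_f^+(\cv_f)}$ and, one checks, $q \in L_+(f)$ since $q\in\omega_f(x)$.

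With $q \in L_+(f)\setminus\overline{\co_f^+(\cv_f)}$ a non-periodic, non-attracting point, Lemma~\ref{LemmaRetnicgdt5} supplies, for a suitable small $\varepsilon < (6K)^{-1}$, a nice interval $J_q = (\alpha,\beta) \ni q$ with $|R|/|J_q|, |L|/|J_q| \ge 1$ (where $R,L$ are the components of $T \setminus J_q$) and with the first-return map $F$ to $J_q$ satisfying the distortion bound $|F'(x)/F'(y)| \le 1+\varepsilon^2$ on each branch. Then Proposition~\ref{Propositionjhvtr68ijn}(2) applies to $\cu(J_q) = \{y \in N \,;\, \omega_f(y) \cap J_q \neq \emptyset\}$: for Lebesgue-a.e.\ $y \in \cu(J_q)$, $\omega_f(y)$ is a cycle of intervals, i.e.\ $y \in \BB_1(f)$. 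Since the $x$ at hand lies in $\cu(J_q)$ but, by assumption, $x \notin \BB_1(f)$, it belongs to the exceptional null set for that particular $J_q$.

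To finish, I would extract a \emph{countable} subfamily of these nice intervals covering all of $E$: the collection $\{J_q\}$ as $q$ ranges over all eligible points has union $\bigcup J_q = \bigcup_{n\ge 1} J_{q_n}$ for some sequence $q_n$ (a standard Lindel\"of-type argument, since we are inside $[0,1]$), and every $x \in E$ satisfies $\omega_f(x)\cap J_{q_n}\neq\emptyset$ for some $n$, i.e.\ $x \in \cu(J_{q_n})$. Hence $E \subset \bigcup_n \big(\cu(J_{q_n}) \setminus \BB_1(f)\big)$, and each summand is null by the previous paragraph, so $\leb(E) = 0$, which is exactly the statement. The main obstacle I anticipate is the bookkeeping in the second paragraph: verifying cleanly that the point $q \in \omega_f(x)$ can simultaneously be chosen non-periodic, non-attracting, in $L_+(f)$, and outside $\overline{\co_f^+(\cv_f)}$, so that Lemma~\ref{LemmaRetnicgdt5} and Proposition~\ref{Propositionjhvtr68ijn} both genuinely apply — and then confirming that "$\omega_f(y)$ is a cycle of intervals for a.e.\ $y\in\cu(J_{q_n})$" together with $\cu(J_{q_n})\supset\{x\in E\,;\,\omega_f(x)\cap J_{q_n}\neq\emptyset\}$ really does sweep up $E$ up to measure zero, rather than just the subset of $E$ whose $\omega$-limit actually equals a cycle of intervals.
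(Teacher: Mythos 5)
Your proposal is correct and follows essentially the same route as the paper's own proof: Corollary~\ref{Lemmaprepre987} to locate a non-periodic point $q$ of $\omega_f(x)$ in a component of $[0,1]\setminus\overline{\co_f^+(\cv_f)}$, Lemma~\ref{LemmaRetnicgdt5} together with Proposition~\ref{Propositionjhvtr68ijn}(2) to conclude that almost every point of $\cu(J_q)$ lies in $\BB_1(f)$, and a Lindel\"of-type countable cover of the chosen nice intervals to sweep up the whole bad set. The bookkeeping issue you flag (choosing $q$ simultaneously non-periodic, non-attracting and outside $\overline{\co_f^+(\cv_f)}$) is handled at the same level of detail in the paper, where the one-sided accumulation statement of Lemma~\ref{Lemmaiuywe4jj} underlying Corollary~\ref{Lemmaprepre987} is what allows the non-periodic point to be placed inside the open component.
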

\begin{proof}
Suppose that there is a connected component $T$ of $[0,1]\setminus\overline{\co_f^+(\cv_f)}$ such that $\Lambda=\{x\in[0,1]\setminus(\BB_0(f)\cup\co_f^-(\per(f)))\,;\,\omega_f(x)\cap T\ne\emptyset\}$ has positive Lebesgue measure. As $[0,1]\setminus\overline{\co_f^+(\cv_f)}$ has only a countable number of connected components, we have only to show that $\leb(\Lambda\setminus\BB_1(f))=0$.

From now on, the remaining part of the proof is similar to the proof of the second statement of Theorem~\ref{MainTheoPreMane}. 
Indeed, given any $\varepsilon>0$ and $p\in\Lambda$, it follows from Corollary~\ref{Lemmaprepre987},  Lemma~\ref{LemmaRetnicgdt5} and Proposition~\ref{Propositionjhvtr68ijn} that there exist a point $q_p\in\omega_f(p)\setminus\per(f)$ and a nice interval $J_p$ containing $q_p$ 
such that $\omega_f(x)$ is a cycle of intervals for almost every $x\in\cu(J_p)$, where $\cu(J_p):=\big\{x\in[0,1]\setminus\big(\BB_0(f)\cup\co_f^-(\per(f))\big)\,;\,\omega_f(x)\cap J_p\ne\emptyset\big\}$.
As $\RR$ is a Lindelöf space, write $$\bigcup_{p\in\Lambda}J_p=\bigcup_{n\ge1}J_{p_n},$$ where $p_n\in\Lambda_{\ell}$. As $\Lambda=\bigcup_{n\ge1}W_n$, with $W_n=\{x\in\Lambda\,;\,\omega_f(x)\cap J_{p_n}\ne\emptyset\}$, we can conclude that almost every point of $\Lambda$ belongs to $\BB_1(f)$.

\end{proof}

\begin{Theorem}\label{TheoOMEGAtudo}
If $f:[0,1]\setminus\cc_f\to[0,1]$ is a $C^2$ non-flat local diffeomorphism, where $\cc_f\subset[0,1]$ is a finite set, then
$$
\omega_f(x)=
\bigcup_{\footnotesize{
\begin{array}{c}
c_{\pm}\in\omega_f(x)\\
c\in\cc_f\end{array}
}}
\overline{\co_f^+(c_{\pm})},
$$
for almost every $x\in[0,1]\setminus\big(\BB_0(f)\cup\BB_1(f)\cup\co_f^-(\per(f))\big)$.
\end{Theorem}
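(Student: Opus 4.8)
The statement refines Lemma~\ref{tudoounadadershchw}, which tells us that for almost every $x\notin\BB_0(f)\cup\BB_1(f)\cup\co_f^-(\per(f))$ we have $\omega_f(x)\subset\overline{\co_f^+(\cv_f)}$. Our goal is to pin down \emph{which} critical values actually contribute, namely only those $c_\pm$ with $c_\pm\in\omega_f(x)$. So the strategy is: first establish the easy inclusion $\supseteq$, then work to upgrade the inclusion $\omega_f(x)\subset\overline{\co_f^+(\cv_f)}$ to the precise description on the right-hand side, by showing the "useless" critical values can be discarded almost surely.

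First I would handle $\supseteq$. If $c_\pm\in\omega_f(x)$ for some $c\in\cc_f$, then since $x\notin\co_f^-(\cc_f)$ (recall $\leb(\co_f^-(\cc_f))=0$, so we may throw this null set away), the orbit of $x$ approaches $c$ from the relevant side without hitting $c$, so the lateral value $f(c_\pm)=\lim_{\varepsilon\searrow0}f(c\pm\varepsilon)$ lies in $\omega_f(x)$; and since $\omega_f(x)$ is forward-invariant (for points whose orbit avoids $\cc_f$) and closed, $\overline{\co_f^+(f(c_\pm))}=\overline{\co_f^+(c_\pm)}\subset\omega_f(x)$. Taking the union over all such $c_\pm$ gives $\supseteq$. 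This is the routine direction.

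The substantive direction is $\subseteq$. By Lemma~\ref{tudoounadadershchw} we already know $\omega_f(x)\subset\overline{\co_f^+(\cv_f)}=\overline{\bigcup_{c\in\cc_f,\,\pm}\co_f^+(c_\pm)}$ for a.e.\ such $x$. The task is to remove, from this union, the critical values $c_\pm$ that do \emph{not} belong to $\omega_f(x)$. The natural approach is a covering/measure argument analogous to the one used at the end of the proof of Theorem~\ref{MainTheoPreMane} and in Lemma~\ref{tudoounadadershchw}: fix a critical value $v=c_\pm$ and consider the set $E_v$ of points $x$ in our good set with $v\notin\omega_f(x)$ but $\overline{\co_f^+(v)}\cap\omega_f(x)\ne\emptyset$; one wants $\leb(E_v)=0$, and since $\cv_f$ is finite, summing over $v$ finishes the proof. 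For $x\in E_v$, the point $v$ has a neighborhood $(v-\eta,v+\eta)$ missed by $\co_f^+(x)$ eventually, yet $\omega_f(x)$ meets $\overline{\co_f^+(v)}$; so $\omega_f(x)$ meets some $\overline{\co_f^+(f^k(v))}$ while staying away from $v$ itself, and one should play off a point $q\in\omega_f(x)\setminus\per(f)$ lying in the relevant connected component $T$ of $[0,1]\setminus\overline{\co_f^+(\cv_f)}$ against a nice interval about $q$, invoking Corollary~\ref{LemNice8}, Lemma~\ref{LemmaRetnicgdt5} and Proposition~\ref{Propositionjhvtr68ijn} exactly as before to conclude that a.e.\ point of $\cu(J_q)$ lies in $\BB_1(f)$ — contradicting membership in the good set unless the measure is zero. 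Alternatively, and perhaps more cleanly, one argues directly: for a.e.\ $x$ in the good set, $\omega_f(x)$ is a transitive set (this should follow because $\omega_f(x)\subset\overline{\co_f^+(\cv_f)}$ and, restricting to a suitable first-return or induced map, one gets minimality-type behavior from Lemma~\ref{pagina1}); a transitive $\omega_f(x)$ contained in $\overline{\co_f^+(\cv_f)}$ must, if it meets $\overline{\co_f^+(v)}$, actually contain $v$, because the orbit closures $\overline{\co_f^+(c_\pm)}$ are the "indecomposable pieces" and a transitive set meeting one of them is forced to contain its generating point $c_\pm$ (here using $v\in\omega_f(v)$ — or rather handling the case $v\notin\omega_f(v)$ separately, where $v$ is eventually periodic-like or wandering and its orbit closure decomposes).

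**Main obstacle.** The hard part will be the $\subseteq$ direction — specifically, making precise the claim that a transitive $\omega$-limit set contained in $\overline{\co_f^+(\cv_f)}$ and meeting $\overline{\co_f^+(c_\pm)}$ must contain $c_\pm$ itself. This requires understanding the fine structure of $\overline{\co_f^+(\cv_f)}$: the orbit closures of distinct lateral critical values can overlap, nest, or share limit sets, so one cannot naively treat them as disjoint building blocks. I expect the cleanest route is to reduce, via the induced Markov maps of Proposition~\ref{Propositionjhvtr68ijn} and the almost-everywhere conclusions of Lemma~\ref{pagina1}, to the statement that for a.e.\ $x$ in the good set $\omega_f(x)$ is \emph{minimal} (contains no proper nonempty closed invariant subset relevant to the orbit), at which point $c_\pm\in\overline{\co_f^+(c_\pm)}\cap\omega_f(x)$ together with $\omega_f(c_\pm)\subset\omega_f(x)$ forces $\omega_f(x)=\overline{\co_f^+(c_\pm)}$ when $c_\pm$ is recurrent, and the non-recurrent case is disposed of because then $c_\pm$ is attracted to a periodic-like orbit or lies on a wandering interval and $\overline{\co_f^+(c_\pm)}$ contributes nothing new to a minimal set. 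Carefully separating the recurrent and non-recurrent critical values, and confirming the minimality reduction actually holds on a full-measure subset, is where the real work lies.
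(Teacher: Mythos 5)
Your easy inclusion ($\supseteq$) is fine and matches the paper, but both routes you sketch for the substantive inclusion rest on intermediate statements that are false in general, and you are missing the device the paper actually uses. Concretely, take a lateral value $v=c_\pm$ that is not recurrent but is attracted to the attractor generated by another lateral value $w$: then $v\notin\omega_f(x)$ while $\omega_f(v)=\omega_f(x)=\overline{\co_f^+(w)}$ for every $x$ in the (positive measure) basin. For such $v$ your set $E_v$ contains the whole basin, so the reduction ``$\leb(E_v)=0$'' is proving a stronger statement that simply fails; likewise your claim that a transitive $\omega$-limit set meeting $\overline{\co_f^+(v)}$ must contain $v$ fails for the same $v$, and your separate handling of the non-recurrent case (``$v$ is eventually periodic-like or wandering'') does not cover it, since $v$ can be attracted to a Cantor attractor without being periodic-like, wandering, or in $\BB_0(f)$. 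The theorem only asserts that $\omega_f(x)$ is covered by orbit closures of lateral values that lie in $\omega_f(x)$, not that it is disjoint from the orbit closures of the others. Two further problems: your first route seeks a contradiction from a point $q\in\omega_f(x)$ inside a component $T$ of $[0,1]\setminus\overline{\co_f^+(\cv_f)}$, but Lemma~\ref{tudoounadadershchw} says no such $q$ exists for a.e.\ $x$ in the good set; and the a.e.\ minimality of $\omega_f(x)$ that your second route needs is neither established anywhere in the paper nor plausible for attractors of the form $\bigcup_j\omega_f(v_j)$ with several critical values.

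The paper's proof eliminates the unwanted lateral values not through the internal structure of $\omega_f(x)$ but through a surgery on the map. After normalizing the boundary as in Remark~\ref{RemarkExt001}, suppose $q=c_-\in\cu$ but $c_-\notin\omega_f(x)$ and $\omega_f(x)\subset\overline{\co_f^+(\cu)}$; choose $n$ so that the forward orbit of $x$ avoids $(c-1/n,c)$ and replace $f$ by a $C^2$ non-flat local diffeomorphism $g_n$ that coincides with $f$ off $(c-1/n,c)$, has the same exceptional set, and sends $c_-$ into $\{0,1\}$. Then $\omega_f(x)=\omega_{g_n}(x)$, while under $g_n$ the contribution of $c_-$ to the relevant orbit closure lies in $\{0,1\}$, which $\omega_f(x)$ avoids; hence $\omega_f(x)\subset\overline{\co_f^+(\cu\setminus\{c_-\})}$. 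Applying this claim recursively over the finitely many lateral values, starting from Lemma~\ref{tudoounadadershchw}, and combining with the trivial inclusion $\overline{\co_f^+(\cu)}\subset\omega_f(x)$ on $\AA(\cu)$, yields the equality. Some mechanism of this kind, decoupling a non-accumulated critical value from the dynamics seen by $x$, is exactly what your proposal lacks.
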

\begin{proof}
By Remark~\ref{RemarkExt001}, we may assume that $f^{-1}(\{0,1\})=\{0,1\}$ and that $\omega_f(x)\cap\{0,1\}=\emptyset$ for every $x\in(0,1)$.

For any $q\in\{c_{\pm}\,;\,c\in\cc_f\}$ and $\cu\subset\{c_{\pm}\,;\,c\in\cc_f\}$, let 
$$\XX(q)=\{x\in [0,1]\setminus\big(\BB_0(f)\cup\BB_1(f)\big)\,;\,q\notin\omega_f(x)\},$$
$$\AA(\cu)=\{x\in[0,1]\setminus(\BB_0(f)\cup\BB_1(f))\,;\,\cu\subset\omega_f(x)\}$$
and $$U(\cu)=\{x\in[0,1]\setminus\big(\BB_0(f)\cup\BB_1(f)\big)\,;\,\omega_f(x)\subset\overline{\co_f^+(\cu)}\}.$$

\begin{claim}
If $q\in\cu\subset\{c_{\pm}\,;\,c\in\cc_f\}$ then
$\omega_f(x)\subset\overline{\co_f^+(\cu\setminus\{q\})}$, for almost every $x\in U(\cu)\cap\XX(q)$.	
\end{claim}
\begin{proof}[Proof of the Claim]
If $q\in\omega_f(p)$ for some $p\in\cu\setminus\{q\}$, then $\overline{\co_f^+(q)}\subset\overline{\co_f^+(p)}$ and so, $\overline{\co_f^+(\cu)}\subset\overline{\co_f^+(\cu\setminus\{q\})}$, which proves the Claim. Thus, we may assume that $q\notin\omega_f(p)$ and $\overline{\co_f^+(q)}\not\subset\overline{\co_f^+(p)}$ $\forall\,p\in\cu\setminus\{q\}$.
Let us consider the case when  $q=c_-$ for some $c\in\cc_f$. The other case, when $q=c_+$, is similar. 

Now, consider $n_0\ge1$ big enough so that $(c-2/n_0,c)\cap\cc_f=\emptyset$ and $\co_f^+(\cu\setminus\{c_-\})=\emptyset$.
Let $\XX_n=\{x\in U(\cu)\,;\,\co_f^+(x)\cap(c-1/n,c)\ne\emptyset\}$.
For each $n\ge n_0$, let  
$g_n:[0,1]\setminus\cc_{g_n}\to[0,1]$ be a $C^2$ non-flat local diffeomorphism such that $g_n|_{[0,1]\setminus(c-1/n,\,c)}\equiv f|_{[0,1]\setminus(c-1/n,\,c)}$, $\cc_{g_n}=\cc_f$ and $g_{n}(c_-)\in\{0,1\}$ (see Figure~\ref{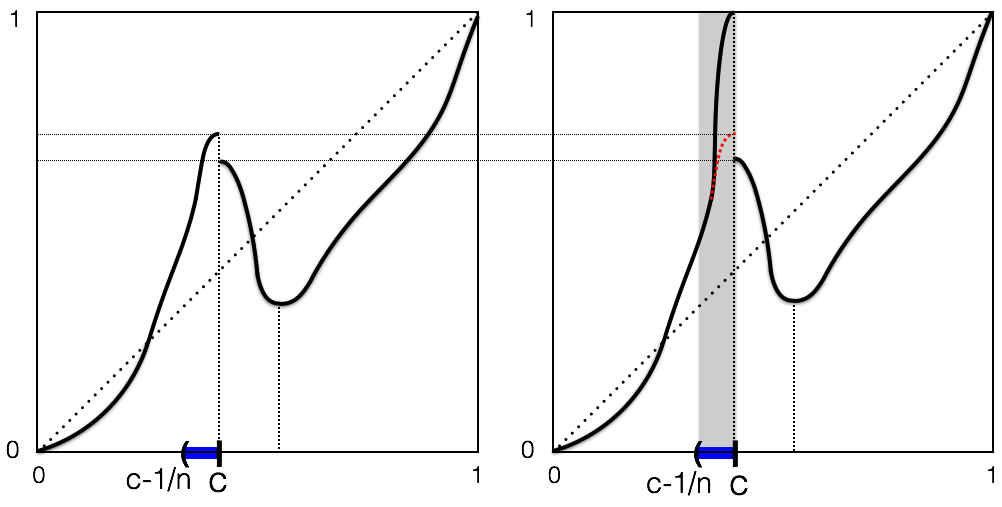}).

As $\omega_f(x)\subset(0,1)$ for every $x\in(0,1)$ and $g_n|_{\XX_n}\equiv f|_{\XX_n}$, we conclude that for every $n\ge n_0$ and  every $x\in\XX_n$ we have that 
$$\omega_f(x)=\omega_{g_n}(x)\subset\big([0,1]\setminus\{c_-,0,1\}\big)\cap\overline{\co_{g_n}^+(\cu)}=$$ 
$$=\big([0,1]\setminus\{c_-,0,1\}\big)\cap\bigg(\overline{\co_{g_n}^+(\cu\setminus\{c_-\})}\cup\overline{\co_{g_{n}}^+(c_-)}\bigg)=$$
$$=\big([0,1]\setminus\{c_-,0,1\}\big)\cap\bigg(\overline{\co_{f}^+(\cu\setminus\{c_-\})}\cup\{c_-,0,1\}\bigg)=\overline{\co_{f}^+(\cu\setminus\{c_-\})}.$$
As $\XX(c_-)=\bigcup_{n\ge n_0}\XX_n$, the Claim is proved. 
\begin{figure}
  \begin{center}\includegraphics[scale=.28]{Extensao4.png}\\
 \caption{}\label{Extensao4.png}
  \end{center}
\end{figure}
\end{proof}

Given $\cu\subset\{c_{\pm}\,;\,c\in\cc_f\}$, it follows from Lemma~\ref{tudoounadadershchw} that $$\overline{\co_f^+(\cu)}\subset\omega_f(x)\subset\overline{\co_f^+(\cv_f)}\subset\overline{\co_f^+(\cc_f)},$$
for almost every point $x\in\AA(\cu)$. Applying recursively the Claim above, we get that 
$$\overline{\co_f^+(\cu)}\subset\omega_f(x)\subset\overline{\co_f^+(\cu)},$$
for almost every point $x\in\AA(\cu)$, concluding the proof.
\end{proof}

\begin{proof}[Proof of Theorem~\ref{mainTheoremMTheoFofA}]
	As we can have at most $2^{2\#\cc_f}-1$ non-empty subsets of $\{c_{\pm}\,;\,c\in\cc_f\}$, Theorem~\ref{mainTheoremMTheoFofA} is an immediate consequence of Theorem~\ref{TheoOMEGAtudo} and of the fact that a map with a finite number of exceptional points has only a finite numbers of cycles of intervals. \end{proof}


\begin{proof}[Proof of Theorem~\ref{mainTheoremClassification}]
	
Consider $\cu\subset\{c_{\pm}\,;\,c\in\cc_f\}$ such that $\leb(U)>0$, where $U=\{x\in[0,1]\setminus(\BB_0(f)\cup\BB_1(f))\,;\,\omega_f(x)=\overline{\co_f^+(\cu)}\}$.
Extending $f$ if  necessary, as in Remark~\ref{RemarkExt001}, we may assume that $f( \{0,1\}) \subset \{0,1\}$ and that $\overline{\co_f^+(x)}\cap\{0,1\}=\emptyset$ $\forall\,x\in U$.
Given $\alpha$ and $\beta\in\cu$, we say that $\alpha\prec\beta$ if $\alpha\in\omega_f(\beta)$ and $\beta\notin\omega_f(\alpha)$. A point $\alpha\in\cu$ is called {\em $\cu$-maximal} if $\nexists\beta\in\cu\setminus\{\alpha\}$ such that $\alpha\prec\beta$.
\begin{Claim}\label{Claim987ygvbnj}
	If $\alpha\in\cu$ is $\cu$-maximal then is not a periodic-like point.
\end{Claim}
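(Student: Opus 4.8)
The plan is to argue by contradiction. Suppose $\alpha\in\cu$ is $\cu$-maximal but \emph{is} a periodic-like point; say $\alpha=c_-$ (or $c_+$) for some $c\in\cc_f$, and that $\alpha$ is left-periodic, the right-periodic case being symmetric. The key structural fact we want to exploit is that, by the definition of $U$, for almost every $x\in U$ we have $\omega_f(x)=\overline{\co_f^+(\cu)}$, and since $\alpha$ is $\cu$-maximal, no other point of $\cu$ can see $\alpha$ in a genuinely ``larger'' way; concretely, if $\beta\in\cu$ and $\alpha\in\omega_f(\beta)$ then also $\beta\in\omega_f(\alpha)$, so $\overline{\co_f^+(\beta)}=\overline{\co_f^+(\alpha)}$. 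In particular $\overline{\co_f^+(\alpha)}$ is itself one of the ``top'' pieces of $\overline{\co_f^+(\cu)}$. First I would show that because $\alpha$ is periodic-like, $\co_f^+(\alpha)$ (hence $\overline{\co_f^+(\alpha)}$) is just a finite union of periodic-like orbit points together with the (at most finitely many) intermediate iterates, so that $\overline{\co_f^+(\alpha)}$ is a finite set — or, if the periodic-like orbit bounds an interval of periodic points as in Figure~\ref{PeriodicLikeAttractor2.png}, a finite union of points and at most one such interval.

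Next I would derive a contradiction with $\leb(U)>0$. Since $\alpha$ is $\cu$-maximal and periodic-like, the points $x\in U$ must have $\alpha\in\omega_f(x)$, so $\co_f^+(x)$ accumulates on the periodic-like orbit of $\alpha$ from the appropriate side. Let $\ell$ be the (left-)period of $\alpha$, so $f^\ell$ is, on a one-sided neighborhood $[a,\alpha)$, an orientation-preserving diffeomorphism fixing $\alpha$ in the limit. By the dichotomy already used in the proof of Lemma~\ref{Lemmaiuywe4jj}, either $f^\ell(x)>x$ on $[a,\alpha)$ — in which case $\alpha$ is an attracting periodic-like point, so $[a,\alpha)\subset\BB_0(f)$, contradicting that points of $U$ lie in $[0,1]\setminus\BB_0(f)$ — or $f^\ell(x)<x$ for all $x\in[a,\alpha)$. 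In the latter case orbits are pushed \emph{away} from $\alpha$ on that side, so a point whose orbit enters $[a,\alpha)$ near $\alpha$ is mapped out of a fixed neighborhood of $\alpha$ after boundedly many applications of $f^\ell$; consequently $\alpha\notin\omega_f(x)$ unless the orbit returns to $(a,\alpha)$ infinitely often, forcing $\overline{\co_f^+(x)}$ to contain points accumulating on $a$ as well, and more importantly forcing the existence of a nice interval with one endpoint at $\alpha$ on which the first return map is well controlled.

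The technical heart of the argument is then the same mechanism as in Proposition~\ref{Propositionjhvtr68ijn} and Lemma~\ref{Lemmajhvtr68ijn}: since $\alpha\notin\overline{\co_f^+(\cv_f)}$ is not an attracting periodic-like point, Corollary~\ref{LemNice8} and Lemma~\ref{LemmaRetnicgdt5} furnish, for every small $\delta$, a nice interval $J_\delta=(\alpha_\delta,\beta_\delta)$ around $\alpha$ with bounded distortion for the first return map $F_\delta$ and with the Koebe space condition $|R|/|J_\delta|,|L|/|J_\delta|\ge1$. Because $\alpha$ is a (non-attracting) one-sided periodic point, the first-entry map of $F_\delta$ back toward $\alpha$ is, up to bounded distortion, modeled by the slowly-expanding diffeomorphism $g$ with a fixed point at an endpoint, and Lemma~\ref{Lemmajhvtr68ijn} gives a definite expansion factor $\gtrsim \frac1{\varepsilon K}\frac{|I_j|}{|J_j|}$ for returns through the one-sided neighborhood. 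Feeding this into the argument of item~(1) of Proposition~\ref{Propositionjhvtr68ijn} shows $\lim_n|DF_\delta^n(x)|=\infty$ for all $x\in\cu(J_\delta)$ off a countable set, and then item~(2) together with Lemma~\ref{pagina1} forces $\omega_f(x)$ to be a cycle of intervals for a.e.\ $x\in\cu(J_\delta)$; since $\cu(J_\delta)\supset\{x\in U:\alpha\in\omega_f(x)\}$, which by maximality of $\alpha$ is essentially all of $U$, this contradicts $U\cap\BB_1(f)=\emptyset$ and $\leb(U)>0$.

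The main obstacle I anticipate is the bookkeeping at $\alpha$ itself: one must be careful that $\alpha$, being a \emph{lateral} periodic-like point rather than an honest periodic point, still allows the first-return/first-entry maps to be defined and put in the normal form required by Lemma~\ref{Lemmajhvtr68ijn} (i.e.\ an orientation-preserving diffeomorphism with a unique fixed point at an endpoint and derivative close to $1$), and that the ``escape on the $f^\ell(x)<x$ side'' really does produce the nice interval with $\alpha$ as a one-sided accumulation point of the orbit rather than, say, a wandering interval — the latter being ruled out by Proposition~\ref{PropWanderingFree} since $\alpha\notin\overline{\co_f^+(\cv_f)}$. Handling the degenerate sub-case where the periodic-like orbit of $\alpha$ borders an interval of fixed-like points (so $\overline{\co_f^+(\alpha)}$ is not finite) requires separately noting that such an interval lies in $\BB_0(f)\cup\co_f^-(\per(f))$, again contradicting $U\subset[0,1]\setminus(\BB_0(f)\cup\BB_1(f))$ once one checks $\co_f^-(\per(f))$ contributes nothing of positive measure here.
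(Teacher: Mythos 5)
There is a genuine gap, and it sits at the very place your argument puts all its weight. Your plan is to run the nice-interval/first-return machinery (Corollary~\ref{LemNice8}, Lemma~\ref{LemmaRetnicgdt5}, Proposition~\ref{Propositionjhvtr68ijn}) at the point $\alpha$ itself, and you assert in passing that $\alpha\notin\overline{\co_f^+(\cv_f)}$. That assertion is not justified, and for a periodic-like $\alpha=c_{\pm}$ it is typically false: if $c_-$ is left-periodic with period $\ell$, the lateral orbit of the critical value $v=f(c_-)\in\cv_f$ satisfies $f^{\ell-1}(y)\to c$ as $y\to v$ from the appropriate side, so $c$ lies in $\overline{\co_f^+(\cv_f)}$ (indeed usually $c=f^{\ell-1}(v)$ belongs to the orbit of $v$). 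All the tools you invoke are conditioned on working inside a component $T$ of $[0,1]\setminus\overline{\co_f^+(\cv_f)}$: that hypothesis is what makes the first entry/return branches full onto the nice interval (the Markov property), which in turn feeds the Koebe space, Lemma~\ref{Lemmajhvtr68ijn} and Lemma~\ref{pagina1}. No interval around $\alpha$ is contained in such a $T$, so the expansion and the ``cycle of intervals'' conclusion cannot be extracted the way you propose. Nor can you rescue this by the modification trick used in Claim~\ref{Claim987ygj} (redefining $f$ on $(c-1/n,c)$ to push critical values to $\{0,1\}$): there the obstruction to niceness came from \emph{other} lateral values and the relevant orbits avoided the modified region, whereas here the obstruction is $\alpha$'s own lateral orbit, and every $x\in U$ with $\alpha\in\omega_f(x)$ must enter every one-sided neighborhood $(c-1/n,c)$, so the modified map changes exactly the dynamics you need.

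The paper's proof avoids this entirely and is purely combinatorial: for $x\in U$ one has $c\in\overline{\co_f^+(x)\cap[0,c)}$, so Lemma~\ref{Lemmaiuywe4jj} upgrades this to accumulation on $c$ from the left by \emph{non-periodic} points of $\omega_f(x)=\overline{\co_f^+(\cu)}$; since $\cu$ is finite, some $\beta\in\cu$ has $\overline{\co_f^+(\beta)}$ accumulating on $c$ from the left, hence $\alpha\in\omega_f(\beta)$ while $\omega_f(\beta)\ne\omega_f(\alpha)$ (the latter being a finite periodic-like orbit, since $\overline{\co_f^+(\alpha)}=\co_f^+(\alpha)$), so $\beta\notin\omega_f(\alpha)$ and $\alpha\prec\beta$, contradicting maximality. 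If you want to keep a proof in your style, the missing idea you would have to supply is precisely this use of Lemma~\ref{Lemmaiuywe4jj} (or some substitute) to transfer the accumulation at $c$ to another element of $\cu$; the heavy distortion machinery is neither needed nor applicable at $\alpha$.
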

\begin{proof}[Proof of the Claim]Suppose that $\alpha\in\cu$ is a periodic-like point.
Suppose that $\alpha=c_-$ for some $c\in\cc_f$, the proof for $\alpha=c_+$ is similar.
Given $x\in U$, it follows from Lemma~\ref{Lemmaiuywe4jj} that $c\in\overline{\omega_f(x)\cap[0,c)}$. As $\omega_f(x)=\overline{\co_f^+(\cu)}$ and $\cu$ is finite, there is $\beta\in\cu$ such that
\begin{equation}\label{Eqjdi2jnj}
c\in\overline{\overline{\co_f^+(\beta)}\cap[0,c)}.	
\end{equation}
As $\overline{\co_f^+(\alpha)}=\co_f^+(\alpha)$, it follows from (\ref{Eqjdi2jnj}) that $\alpha=c_-\in\omega_f(\beta)$ and also that $\omega_f(\beta)\ne\omega_f(\alpha)$. In particular, $\alpha\in\omega_f(\beta)$ but $\beta\notin\omega_f(\alpha)$. That is, $\alpha$ is not $\cu$-maximal.\end{proof}

\begin{Claim}\label{Claim987ygj}
	If $\alpha\in\cu$ is $\cu$-maximal then $\alpha$ is recurrent.
\end{Claim}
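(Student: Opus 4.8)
The plan is to prove Claim~\ref{Claim987ygj}: if $\alpha\in\cu$ is $\cu$-maximal, then $\alpha\in\omega_f(\alpha)$. I would argue by contradiction, assuming $\alpha\notin\omega_f(\alpha)$. By Claim~\ref{Claim987ygvbnj}, $\alpha$ is not a periodic-like point, so $\co_f^+(\alpha)$ accumulates somewhere strictly, and combined with the hypothesis $\alpha\notin\omega_f(\alpha)$ we get that $\alpha$ does not belong to the closure of any ``tail'' of its own orbit. First I would treat $\alpha=c_-$ for some $c\in\cc_f$, the case $\alpha=c_+$ being symmetric, and write $v=\alpha=c_-$, so $\co_f^+(v)$ is the honest forward orbit $\{f^j(v)\}_{j\ge0}$ (using that $v\notin\co_f^-(\cc_f)$, which holds because $\cu\subset\cv_f\setminus\co_f^-(\cc_f)$ in the relevant situations, or can be arranged after extending $f$).

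The key mechanism is the same one used in the proof of Lemma~\ref{Lemmaiuywe4jj} and in the Claim inside Theorem~\ref{TheoOMEGAtudo}: since every $x\in U$ has $\omega_f(x)=\overline{\co_f^+(\cu)}$, and since $c\in\overline{\co_f^+(\cu)}$ (it is a boundary-type accumulation point coming from $v=c_-$), Lemma~\ref{Lemmaiuywe4jj} forces $c\in\overline{(\omega_f(x)\setminus\per(f))\cap[0,c)}$ for $x\in U\setminus(\BB_0(f)\cup\co_f^-(\per(f)))$, hence $c\in\overline{\co_f^+(\cu)\cap[0,c)}$. Because $\cu$ is finite, there must be some $\beta\in\cu$ with $c\in\overline{\co_f^+(\beta)\cap[0,c)}$; that is, the orbit of $\beta$ approaches $c$ from the left. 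Now I would split on whether $\beta=v$ or not. If $\beta\ne v$, then $\alpha=c_-\in\omega_f(\beta)$, and since $\alpha\notin\omega_f(\alpha)$ we have $\omega_f(\beta)\ne\omega_f(\alpha)$, so in particular $\beta\notin\omega_f(\alpha)$ (if $\beta\in\omega_f(\alpha)$ then $\overline{\co_f^+(\beta)}\subset\overline{\co_f^+(\alpha)}$ and one derives $\alpha\in\omega_f(\alpha)$ from $c\in\overline{\co_f^+(\beta)\cap[0,c)}$, a contradiction). Hence $\alpha\prec\beta$, contradicting $\cu$-maximality of $\alpha$.

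The remaining, and I expect main, case is $\beta=v$ itself: the orbit of $v=c_-$ returns arbitrarily close to $c$ from the left, yet $c=\alpha\notin\omega_f(v)$. This is the delicate point. Here I would argue that the accumulation of $\co_f^+(v)$ on $c$ from the left, together with $v$ being non-periodic-like and $c\notin\omega_f(v)$, produces a wandering-interval-type obstruction: one builds a small interval $(c-\delta,c)$ whose forward orbit is controlled — using Corollary~\ref{LemNice8} or the Homterval Lemma — and the fact that $\co_f^+(v)$ passes through $(c-\delta,c)$ infinitely often while $c$ itself is not recurrent yields either a wandering interval with $c$ in its $\omega$-limit (contradicting Proposition~\ref{PropWanderingFree}, since $c\in\overline{\co_f^+(\cv_f)}$ would then force $\omega_f$ of that interval outside $\overline{\co_f^+(\cv_f)}$... wait — more precisely, one uses that $(c-\delta,c)$ would have to be a homterval not inside $\BB_0(f)\cup\co_f^-(\per(f))$, hence wandering, and then $c\in\omega_f(v)$ is impossible to avoid because orbit points of $v$ inside it return), or it directly forces $c\in\omega_f(v)$. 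The cleanest route is: since $c\notin\omega_f(v)$ but $\co_f^+(v)$ accumulates at $c$ from the left, for each small $\delta$ all but finitely many entries of $\co_f^+(v)$ into $(c-\delta,c)$ are ``transient'', so one extracts a genuine accumulation point $v_\delta\in[c-\delta,c)\cap\omega_f(v)$ bounded away from $c$; passing $\delta\to0$ and using finiteness plus Lemma~\ref{Lemmaiuywe4jj}'s pre-periodic-point construction, one produces a $\beta'\in\cu$ realizing $\alpha\prec\beta'$, again contradicting maximality. The main obstacle is making this last extraction airtight without circularity — in particular ensuring the accumulation point one extracts genuinely lies in $\overline{\co_f^+(\cu)}$ and is represented by some $\beta'\ne v$ in $\cu$, which is where the finiteness of $\cu$ and the structure $\omega_f(x)=\overline{\co_f^+(\cu)}$ must be leveraged carefully, mirroring the recursive Claim in Theorem~\ref{TheoOMEGAtudo}.
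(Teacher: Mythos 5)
There is a genuine gap, and it sits at the very first step of your argument. Your plan is to rerun the combinatorial argument of Claim~\ref{Claim987ygvbnj}: deduce that $\overline{\co_f^+(\cu)}$ accumulates on $c$ from the left, extract by finiteness some $\beta\in\cu$ with $c\in\overline{\overline{\co_f^+(\beta)}\cap[0,c)}$, and then contradict either maximality (if $\beta\ne\alpha$) or non-recurrence (if $\beta=\alpha$). But the step that produces the left accumulation, namely ``Lemma~\ref{Lemmaiuywe4jj} forces $c\in\overline{(\omega_f(x)\setminus\per(f))\cap[0,c)}$'', is exactly where Lemma~\ref{Lemmaiuywe4jj} cannot be invoked: its hypothesis is that the point is periodic-like, and Claim~\ref{Claim987ygvbnj} tells you that a $\cu$-maximal $\alpha$ is \emph{not} periodic-like. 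Knowing only that $\alpha=c_-\in\omega_f(x)$ (i.e.\ that the orbit of $x$ enters every $(c-\delta,c)$) does not imply that $\omega_f(x)\cap[0,c)$ clusters at $c$: the orbit may enter $(c-\delta,c)$ only at points converging to $c$ itself and leave immediately, so $\overline{\co_f^+(\cu)}\cap[0,c)$ can stay away from $c$. (Incidentally, the case you flag as the delicate one, $\beta=\alpha$, is actually immediate: if $\overline{\co_f^+(\alpha)}\cap[0,c)$ accumulates at $c$, then orbit points of $\alpha$ approach $c$ from the left at unbounded times, giving $\alpha\in\omega_f(\alpha)$ directly. The real difficulty is the accumulation statement itself, which in general is simply false without periodic-likeness; your wandering-interval sketch for patching this is, as you concede, not an argument.)

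The paper handles precisely this bad scenario by a completely different mechanism. From maximality plus $\alpha\notin\omega_f(\alpha)$ it first deduces $\alpha\notin\overline{\co_f^+(f(\cu))}$. It then performs surgery: choosing $\ell$ with $\leb(U_\ell)>0$, where $U_\ell$ consists of points of $U$ whose orbits avoid $1/\ell$-one-sided neighborhoods of the lateral critical points outside $\cu$, it modifies $f$ on those neighborhoods to a map $g$ sending the lateral values $(\cc_f)_\pm\setminus\cu$ into $\{0,1\}$. For $g$ one has $\alpha\notin\overline{\co_g^+(\cv_g)}$ while $g$ coincides with $f$ along the orbits of points of $U_\ell$, so $\alpha\in\omega_g(x)$ for a positive-measure set. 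Lemma~\ref{LemmaRetnicgdt5} and Proposition~\ref{Propositionjhvtr68ijn} then force $\omega_g(x)=\omega_f(x)$ to be a cycle of intervals for almost every such $x$, contradicting $U\cap\BB_1(f)=\emptyset$. In other words, the contradiction is measure-theoretic (via the induced Markov map machinery), not combinatorial; your proposal is missing this idea, and the combinatorial route you propose cannot be completed as stated because its key lemma is unavailable for non-periodic-like points.
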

\begin{proof}[Proof of the Claim]
	Suppose by contradiction that there exists a $\cu$-maximal $\alpha\in\cu$ such that $\alpha\notin\omega_f(\alpha)$.
		Note that, in this case, $\alpha\notin\overline{\co_f^+(f(\cu))}=\bigcup_{\beta\in\cu}\overline{\co_f^+(\beta)}$.

		Given $\varepsilon>0$ and $c\in\cc_f$, let $I_{\varepsilon}(c_-)=(c-\varepsilon,c)$ and $I_{\varepsilon}(c_+)=(c,c+\varepsilon)$. Let $B_n=\bigcup_{\beta\in(\cc_f)_\pm\setminus\cu}I_{1/n}(\beta)$, for any $n\ge1$, $U_n=\{x\in U\,;\,\co_f^+(x)\cap B_n=\emptyset\}$. As $U=\bigcup_nU_n$, choose $\ell\ge1$ so that $\leb(U_\ell)>0$.

Let $g:[0,1]\setminus\cc_f\to[0,1]$ be any $C^2$ non-flat local diffeomorphism such that $g(x)=f(x)$ for every $x\in([0,1]\setminus\cc_f)\setminus B_\ell$ and that $g(\beta)\in\{0,1\}$ for every $\beta\in(\cc_f)_\pm\setminus\cu$. 

As in the Claim before, let us suppose that $\alpha=c_-$ for some $c\in\cc_f$, the proof for $\alpha=c_+$ is similar. Notice that $c\notin\overline{\co_g^+(g((\cc_f)_\pm))}=\bigcup_{a\in\cc_f}\overline{\co_g^+(c_\pm)}$, as $\overline{\co_g^+(g(\cu))}=\overline{\co_f^+(f(\cu))}$ and that $\overline{\co_g^+(g((\cc_f)_\pm\setminus\cu))}\subset\{0,1\}$.

Thus, we can apply Lemma~\ref{LemmaRetnicgdt5} and Proposition~\ref{KoebeVvS} to conclude that $\omega_g(x)$ is a cycle of intervals for almost every $x\in U_\ell$. This is a contradiction, as $\omega_f(x)=\omega_g(x)$ for every $x\in U_\ell$ and $U_\ell\cap\BB_1(f)=\emptyset$.

\end{proof}
	
As $\cu$ is a finite set, given any $\alpha\in\cu$ then either $\alpha$ is $\cu$-maximal or there is a $\cu$-maximal $\alpha'\in\cu$ such that $\alpha\prec\alpha'$.
Thus, there is a subset $\{\alpha_1,\cdots,\alpha_n\}\subset\cu$ of $\cu$-maximal points such that $\bigcup_{j=1}^n\overline{\co_f^+(\alpha_j)}=\bigcup_{\alpha\in\cu}\overline{\co_f^+(\alpha)}$.
As each $\alpha_j$ is recurrent (Claim~\ref{Claim987ygj}), we get that $\overline{\co_f^+(\alpha_j)}=\omega_f(f(\alpha_j))\ni\alpha_j$ $\forall\,1\le j\le n$.
Furthermore, it follows from Claim~\ref{Claim987ygj} that $v_j:=f(\alpha_j)\notin\co_f^-(\cc_f)$ is recurrent and not periodic.
Thus, as $f|_{\co_f^+(v_j)}$ is continuous, $v_j\notin\per(f)$ and $v_j\in\omega_f(v_j)$, we get that $\omega_f(v_j)$ is a perfect set.
Of course that $\interior(\omega_f(v_j))=\emptyset$, as $\omega_f(x)\supset\omega_f(v_j)$ $\forall\,x\in U$ and $U\cap\BB_1(f)=\emptyset$.
So, $\omega_f(v_j)$ is a Cantor set for every $1\le j\le n$. As a consequence, also $\omega_f(v_1)\cup\cdots\cup\omega_f(v_n)$ is a Cantor set, which concludes the proof.

\end{proof}

\color{black}


\begin{thebibliography}{10}












\bibitem{BL89eg} A. M. Blokh, M. Lyubich. {\em Nonexistence of wandering intervals and structure of topological attractors of one-dimensional dynamical systems. II. The smooth case}. Ergodic Theory Dynam. Systems 9, no. 4, p. 751-758, 1989.

\bibitem{BL91} A. Blokh and M. Lyubich. {\em Measurable dynamics of $S$-unimodal maps of the interval}, Ann. Sci. \'Ecole Norm. Sup.  24, p. 545-573, 1991.



\bibitem{Brandao:2013wf} P. Brandão, J. Palis and V. Pinheiro. {\em On the Finiteness of Attractors for One-Dimensional Maps with Discontinuities}. Arxiv.org/abs/1401.0232 - Cornell University Library, 2013 (Preprint).







\bibitem{BKNvS} H. Bruin, G. Keller, T. Nowicki and S. van Strien. {\em Wild Cantor Attractors Exist.} The Annals of Mathematics, Second Series, Vol. 143, No. 1, p. 97-130, 1996.
















\bibitem{GJ} J. Guckenheimer and S. Johnson. {\em Distortion of S-Unimodal Maps}. The Annals of Mathematics. 1v32, p. 71-130, 1990.

\bibitem{GW}	J. Guckenheimer and R. F. Williams. {\em Structural stability of Lorenz attractors.} Publ. Math. IHES
50, p. 59-72, 1979.


























\bibitem{Man85} R. Ma\~n\'e. \emph{Hyperbolicity, sinks and measure in one-dimensional dynamics}. Comm. Math. Phys.,100, p. 495-524, 1985. Commun. Math. Phys. 112, p. 721-724 (Erratum), 1987.


\bibitem{MS89} W. de Melo; S. van Strien.  {\em A Structure Theorem in One Dimensional Dynamics}. 
Annals of Mathematics, vol. 129, p. 519-546, 1989. 





\bibitem{MvS} W. de Melo and S. van Strien. {\em One Dimensional Dynamics}, Springer-Verlag, 1993.


%
%



\bibitem{Milnor:1985ut} J. Milnor. {\em On the Concept of Attractor}. Commun. Math. Phys, 99, p. 177-195, 1985.




\bibitem{Pa00} J. Palis. {\em A Global View of Dynamics and a Conjecture on the Denseness of Finitude of Attractors.} Ast\'erisque, France, v. 261, p. 339-351, 2000.

\bibitem{Pa2005} J. Palis. {\em A global perspective for non-conservative dynamics}. Annales de l'Institut Henri Poincaré. Analyse Non Linéaire, 22, p. 487-507, 2005.




%
%
%




\bibitem{vanStrien:2004p221} S. van Strien and E. Vargas. {\em Real bounds, ergodicity and negative Schwarzian for multimodal maps}, J. Am. Math. Soc. 17, 749-782. 2004.












\end{thebibliography}
\end{document}